\documentclass[12pt]{elsarticle}
\usepackage{graphicx}
\usepackage{amsthm}
\usepackage{amsmath}
\usepackage{amssymb}
\usepackage{mathrsfs}
\usepackage{multirow}
\usepackage{color}
\usepackage{indentfirst}
\usepackage{geometry}
\usepackage{CJK}
\usepackage{makecell}
\usepackage{graphicx,subcaption}
\usepackage{epstopdf}
\usepackage{booktabs}
\usepackage{overpic}
\usepackage{tikz}
\usepackage{hyperref}  

\newtheorem{remark}{Remark}
\newtheorem{theorem}{Theorem}[section]
\newtheorem{lemma}{Lemma}[section]
\numberwithin{equation}{section} 
\newtheorem{proposition}{Proposition} 
\newtheorem{problem}{Problem}%

\usepackage{geometry}
\biboptions{square,comma}

\journal{Journal of Computational Physics}
\begin{document}
\begin{frontmatter}

\title{Improved error estimates of a new splitting scheme\\ for charged-particle dynamics in strong magnetic field\\ with  maximal ordering}

\author[mymainaddress]{Mengting Hu}
\ead{humting@stu.xjtu.edu.cn}

\author[mysecondaryaddress]{Jiyong Li}
\cortext[mycorrespondingauthor]{Corresponding author.}
\ead{ljyong406@163.com}

\author[mymainaddress]{Bin Wang\corref{mycorrespondingauthor}}
\ead{wangbinmaths@xjtu.edu.cn}

\address[mymainaddress]{School of Mathematics and Statistics, Xi’an Jiaotong University, Xi’an, Shanxi 710049, China}
\address[mysecondaryaddress]{School of Mathematical Sciences, Hebei Key Laboratory of Computational Mathematics and Applications, Hebei International Joint Research Center for Mathematics and Interdisciplinary Science, Hebei Normal University, Shijiazhuang, 050024, China}


\begin{abstract}
This paper introduces a novel second-order splitting scheme for charged-particle dynamics in strong magnetic fields characterized by the maximal ordering. The proposed scheme is explicit and symmetric, which respectively ensure the efficiency of the algorithm and its long-term near-conservation of energy.  We rigorously prove that the scheme achieves improved error bounds for both the position and the velocity component parallel to the magnetic field, yielding a uniform second-order error bound under specific strong-field regimes. Numerical experiments confirm the optimal convergence rates and the long-term energy near conservation of the method.
\end{abstract}

\begin{keyword}
Charged-particle dynamics \sep Strong magnetic field \sep Strang splitting scheme\sep Symmetry \sep Improved error bounds.

\MSC[2010]{65L05 \sep 65P10 \sep 78M25}

\end{keyword}

\end{frontmatter}


\section{Introduction}\label{sec1}
In this paper, we investigate the numerical solution of the charged-particle dynamics (CPD) in a strong magnetic field, governed by the following system of differential equations:
\begin{equation}
	\begin{split}
		\dot{x}(t) & = v(t),  \\
		\dot{v}(t) & = v(t) \times \mathcal{B}( x(t)) + E(x(t)), \quad 0 < t \leq T,  \\
		x(0) & = x_{0}, \quad v(0) = v_{0},
	\end{split}
	\label{1.1}
\end{equation}
where the unknown functions $x(t):[0,T]\rightarrow\mathbb{R}^{3}$ and $v(t):[0,T]\rightarrow\mathbb{R}^{3}$ represent the position and velocity of the particle, respectively, with given initial conditions $x_{0}, v_{0} \in \mathbb{R}^{3}$. The expression $\mathcal{B}(x) \in \mathbb{R}^3$ denotes the magnetic field and the electric field $E(x) = -\nabla U(x)$ is derived from a scalar potential $U(x)$. The dimensionless parameter $0<\varepsilon \ll 1 $ is inversely proportional to the strength of the magnetic field.
We consider the CPD under the maximal ordering scaling (MOS) \cite{Poss2018}:
\begin{equation}
	\mathcal{B}(x) = \frac{B(\varepsilon^q x)}{\varepsilon},
	\label{BX}
\end{equation}
where $q \in [1,2]$ and \(B=(b_{1},b_{2},b_{3})^{\intercal}\in\mathbb{R}^{3}\) is a magnetic field. This scaling is predicated on two key physical assumptions. First, the magnetic field is significantly stronger than the electric field, which is quantified by the condition
$
\varepsilon \sim \frac{|E|}{c|\mathcal{B}|} \ll 1.
$
Second, the magnetic field varies slowly in space, with its spatial variation being of order $\varepsilon^q$. This is expressed by the inequality
$\varepsilon^q \sim \rho \frac{|\nabla \mathcal{B}|}{\mathcal{B}} \ll 1,$
where $\rho$ denotes the characteristic length of the particle dynamics, such as the gyro-radius.
A fundamental property of the system \eqref{1.1} is the conservation of energy. Along the solution, the Hamiltonian $H(t)$ of the system
\begin{equation}
	H\left(x(t),v(t)\right) := \frac{1}{2}\left|v(t)\right|^{2} + U(x(t)) \equiv H\left(x(0),v(0)\right), \quad t \geq 0,
	\label{1.2}
\end{equation}
is conserved.

Charged-particle dynamics, as a core topic in classical electrodynamics, has a long-standing and deeply researched history in the physics community \cite{ref18,ref19,ref20,ref21,ref22}.
In fusion devices like tokamaks, strong magnetic fields are employed to confine high-temperature plasmas and control the trajectories of charged particles within them, thereby enabling controlled fusion reactions.
This significant application context has greatly driven advancements in the detailed modeling and large-scale numerical simulation of CPD in strong magnetic field environments.
Consequently, \eqref{1.1} frequently arises as a central problem in the particle discretization of various kinetic models \cite{ref24,ref25,ref26,ref27,ref28,ref29,ref30,ref31,ref32,ref33}.

Extensive research has been devoted to the case of $\varepsilon = 1$ in \eqref{1.1}. The Boris method \cite{ref34}, originally proposed in 1970, has since had its mathematical properties systematically analyzed \cite{ref36}. Furthermore, a wide array of structure-preserving algorithms has been developed for this regime, including symplectic algorithms \cite{ref41}, K-symplectic algorithms \cite{ref38}, variational symplectic algorithms \cite{ref48}, explicit time-symmetric multistep algorithms \cite{ref37}, Poisson integrators \cite{ref42}, volume-preserving algorithms \cite{ref43}, and various energy-preserving algorithms \cite{ref44,ref45}.

There has also been extensive research on numerical algorithms for \eqref{1.1} under strong magnetic fields (i.e., $0 < \varepsilon \ll 1$).	
Some structure-preserving methods were analyzed in \cite{ref9,ref6}.
A numerical discretization using variational integrators was  proved to maintain approximate conservation properties over long time scales in  \cite{ref1}.    However, the error of these algorithms   usually depends on $\varepsilon$, and the error increases as $\varepsilon$ decreases.  
To overcome this severe step-size constraint, uniformly accurate algorithms have been proposed. In the context of strong magnetic fields, such algorithms have been designed for the Vlasov equation \cite{UA2,UA3,UA1} by combining them with Particle-in-Cell (PIC) discretization. These schemes achieve errors and time-step constraints entirely independent of $\varepsilon$. In \cite{UA4}, a class of uniformly accurate algorithms was derived for \eqref{1.1}.
It is important to note that these methods rely on two-scale reformulations, which expand the dimensionality of the problem and consequently incur higher computational costs compared to traditional approaches.

Seeking to balance the efficiency of traditional algorithms with the favorable error bounds, filter algorithms have been investigated. A filtered Boris algorithm was proposed in \cite{filter1} for \eqref{1.1}, followed by the study of filtered variational integrators in \cite{filter2}. Although these methods demonstrate improved accuracy and long-term approximate conservation of invariants, they remain implicit, necessitating the solution of nonlinear systems at each step and thus reducing computational efficiency.
To obtain uniform error bounds with explicit traditional methods, \cite{ref11} developed several first-order algorithms via splitting techniques for \eqref{1.1}. However, although extending this approach to second-order accuracy is nontrivial, the resulting second-order schemes fail to retain uniform error bounds, even for  specialized magnetic fields such as the uniform strong case \cite{ref12}.

In this work, we introduce a novel splitting strategy that achieves second-order uniform error bounds for a class of strong magnetic fields. The proof relies on transforming the original system into a long-time problem via time scaling. Leveraging the skew-symmetry of the magnetic field, we identify a propagator that generates a periodic flow. By carefully analyzing error propagation over each period for a long time, we deduce the improved error bounds.
The main advantages of the proposed scheme are summarized as follows:
\begin{itemize}
	\item Based on Strang splitting, the scheme is explicit and straightforward to implement, avoiding any iterative processes and ensuring high computational efficiency.
	\item   The scheme is symmetric, thereby demonstrating excellent long-term approximate energy conservation properties.
\item  For \eqref{1.1}  with $q=2$ and uniform strong  magnetic field, our scheme maintains a second-order uniform error bound for both the position and the velocity component parallel to the magnetic field.
	For \(1 < q < 2\), the error bound of the our scheme is \(\mathcal{O}(\varepsilon^{q-2} h^2)\) ($h$ denotes the time step size), which outperforms that of traditional second-order splitting schemes. Even in the case of \(q = 1\), our scheme still achieves better error performance compared to traditional schemes, as will be demonstrated in the numerical experiments.
	
\end{itemize}

The outline of the paper is as follows. Section \ref{sec2} introduces the new explicit second-order splitting scheme and presents its properties, demonstrated by a series of numerical experiments. In Section \ref{sec3}, we provide a rigorous convergence analysis. Section \ref{sec4} gives some concluding remarks.
Throughout this paper, we use the notation
\begin{equation*}
	A \lesssim B
\end{equation*}
to indicate that  $A \leq CB$, where $ C > 0$  is a generic constant independent of the time stepsize $h$, the number of time steps $n$ and $\varepsilon$.

\section{New splitting method}\label{sec2}
In this section, we present the formulation of the method, study its properties and test its performance by some numerical experiments.

\subsection{Formulation and properties of the method}\label{subsec2.1}
Denote \(h>0\) as the time step size and \(t_{n}=nh\) for \(n\in\mathbb{N}\). 
By introducing the magnetic field term $\frac{B(\varepsilon^{q} x_{0})}{\varepsilon} v(t)$, we can split \eqref{1.1} into the following two subflows:
\begin{equation} 
	\frac{d}{dt}\begin{pmatrix}x(t)\\ v(t)\end{pmatrix}=
	\mathcal{S}_{x_{0}}\begin{pmatrix}x(t)\\ v(t)\end{pmatrix}
	+ \mathcal{T}_{x_{0}}\begin{pmatrix}x(t)\\ v(t)\end{pmatrix},
	\label{2.1}
\end{equation}
where
\begin{equation*}
	\begin{gathered} 
		\mathcal{S}_{x_{0}}\begin{pmatrix}x(t)\\ v(t)\end{pmatrix}
		= \begin{pmatrix}
			v(t) \\
			\displaystyle\frac{\widehat{B}(\varepsilon^{q} x_{0})}{\varepsilon}v(t)
		\end{pmatrix}, \\
		\mathcal{T}_{x_{0}}\begin{pmatrix}x(t)\\ v(t)\end{pmatrix}
		= \begin{pmatrix}
			0 \\
			\displaystyle\frac{\widehat{B}(\varepsilon^{q} x(t))-\widehat{B}(\varepsilon^{q} x_{0})}{\varepsilon}v(t)
			+ E\bigl(x(t)\bigr)
		\end{pmatrix},
	\end{gathered}
\end{equation*}
with the skew symmetric matrix 
\begin{equation*}
	\widehat{B}(x)=\begin{pmatrix}
		0 & b_{3}(x) & -b_{2}(x) \\ 
		-b_{3}(x) & 0 & b_{1}(x) \\ 
		b_{2}(x) & -b_{1}(x) & 0
	\end{pmatrix}
\end{equation*} 
and the magnetic field \(B=(b_{1},b_{2},b_{3})^{\intercal}\in\mathbb{R}^{3}\).
We denote the exact flows of the subsystems corresponding to \(\mathcal{S}_{x_{0}}\) and \(\mathcal{T}_{x_{0}}\) by \(\varphi_{t}^{\mathcal{S}_{x_{0}}}\) and \(\varphi_{t}^{\mathcal{T}_{x_{0}}}\), respectively. 
For the first flow, we get its exact propagator
\begin{equation}
	\begin{pmatrix}x(t_{n+1}) \\ v(t_{n+1})\end{pmatrix}
	= \varphi_{h}^{\mathcal{S}_{x_{0}}}\begin{pmatrix}x(t_{n}) \\ v(t_{n})\end{pmatrix}
	:= \begin{pmatrix}
		x(t_{n}) + h\varphi_{1}\bigl(\frac{h}{\varepsilon}\widehat{B}(\varepsilon^{q} x_{0})\bigr)v(t_{n}) \\
		\mathrm{e}^{\frac{h}{\varepsilon}\widehat{B}(\varepsilon^{q} x_{0})}v(t_{n})
	\end{pmatrix},
	\label{2.3}
\end{equation}
where \(\varphi_{1}(s)=({\rm e}^{s}-1)/s\). For the second flow, the exact propagator is
\begin{gather}
	\begin{pmatrix}x(t_{n+1})\\ v(t_{n+1})\end{pmatrix}
	= \varphi_{h}^{\mathcal{T}_{x_{0}}}\begin{pmatrix}x(t_{n}) \\ v(t_{n})\end{pmatrix} \notag \\
	:= \begin{pmatrix}
		x(t_{n}) \\
		\mathrm{e}^{\frac{h}{\varepsilon}\bigl(\widehat{B}
			(\varepsilon^{q} x(t_{n}))-\widehat{B}(\varepsilon^{q} x_{0})\bigr)}v(t_{n})
		+ h\varphi_{1}\bigl(\frac{h}{\varepsilon}\bigl(\widehat{B}
		(\varepsilon^{q} x(t_{n}))-\widehat{B}(\varepsilon^{q} x_{0})\bigr)\bigr)E(x(t_{n}))
	\end{pmatrix}.
	\label{2.4}
\end{gather}

The numerical method is obtained through composition.

\noindent\textbf{Algorithm 2.1.} 
Given \(x^0 = x_0\) and \(v^0 = v_0\), and denoting the numerical solutions as \(x^{n}\approx x(t_{n})\) and \(v^{n}\approx v(t_{n})\), the Strang splitting scheme  
\begin{equation*}
	\begin{pmatrix}x^{n+1}\\ v^{n+1}\end{pmatrix} = \varphi_{h/2}^{S_{x_{0}}}\circ\varphi_{h}^{\mathcal{T}_{x_{0}}}
	\circ\varphi_{h/2}^{S_{x_{0}}}
	\begin{pmatrix}x^{n}\\ v^{n}\end{pmatrix}
\end{equation*}
for solving \eqref{1.1} in total reads as
\begin{align}
	\begin{split}
		\displaystyle x^{n+1}=& x^{n}+\frac{h}{2}\varphi_{1}\biggl(\frac{h\widehat{B}_{0}}{2\varepsilon}\biggr)
		\Bigl(I+\mathrm{e}^{h\frac{\widehat{B}_{\bar{z}_{n}}-\widehat{B}_{0}}{\varepsilon}}
		\mathrm{e}^{\frac{h\widehat{B}_{0}}{2\varepsilon}}\Bigr)v^{n}  +\frac{h^{2}}{2}\varphi_{1}\biggl(\frac{h\widehat{B}_{0}}{2\varepsilon}\biggr)
		\varphi_{1}\biggl(h\frac{\widehat{B}_{\bar{z}_{n}}-\widehat{B}_{0}}{\varepsilon}\biggr)
		E(\bar{z}_{n}), \\
		\displaystyle v^{n+1}=& \mathrm{e}^{\frac{h\widehat{B}_{0}}{2\varepsilon}}
		\mathrm{e}^{h\frac{\widehat{B}_{\bar{z}_{n}}-\widehat{B}_{0}}{\varepsilon}}
		\mathrm{e}^{\frac{h\widehat{B}_{0}}{2\varepsilon}}v^{n}  +h\mathrm{e}^{\frac{h\widehat{B}_{0}}{2\varepsilon}}
		\varphi_{1}\biggl(h\frac{\widehat{B}_{\bar{z}_{n}}-\widehat{B}_{0}}{\varepsilon}\biggr)
		E(\bar{z}_{n}), \quad 0\leq n < \frac{T}{h},
	\end{split}
	\label{2.5}
\end{align}
with the notations
\begin{equation}
	\bar{z}_{n}=x^{n}+\frac{1}{2}h\varphi_{1}\biggl(\frac{h\widehat{B}_{0}}{2\varepsilon}\biggr)v^{n},\quad
	\widehat{B}_{0}=\widehat{B}(\varepsilon^{q} x_{0}),\quad
	\widehat{B}_{\bar{z}_{n}}=\widehat{B}(\varepsilon^{q} \bar{z}_{n}).
	\label{2.6}
\end{equation}
We refer to the scheme \eqref{2.5} as S2-new and can easily prove that it is a symmetric scheme.

\begin{proposition}
	The S2-new scheme \eqref{2.5} with \eqref{2.6} is time symmetric.
\end{proposition}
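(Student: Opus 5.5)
Time symmetry here means that if $\Phi_h$ denotes the one-step map $(x^n,v^n)\mapsto(x^{n+1},v^{n+1})$ defined by \eqref{2.5}--\eqref{2.6}, then $\Phi_h\circ\Phi_{-h}=\mathrm{id}$, equivalently $\Phi_h^{-1}=\Phi_{-h}$. The argument works at the level of the composition $\Phi_h=\varphi_{h/2}^{\mathcal{S}_{x_0}}\circ\varphi_{h}^{\mathcal{T}_{x_0}}\circ\varphi_{h/2}^{\mathcal{S}_{x_0}}$ rather than with the explicit formulas. The key point is that the frozen reference point $x_0$, and hence $\widehat{B}_0=\widehat{B}(\varepsilon^q x_0)$, is the fixed initial datum. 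It does not change from step to step and does not depend on $h$. Both sub-vector fields $\mathcal{S}_{x_0}$ and $\mathcal{T}_{x_0}$ are therefore fixed autonomous vector fields on $\mathbb{R}^6$, and \eqref{2.3}, \eqref{2.4} are their exact time-$h$ flows.

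First, the explicit formulas \eqref{2.3} and \eqref{2.4} are indeed the exact flows. For \eqref{2.3} this follows from solving the linear system $\dot v=\widehat{B}_0 v/\varepsilon$, $\dot x=v$. For \eqref{2.4}, $x$ is constant along the $\mathcal{T}_{x_0}$-flow, so the $v$-equation is linear with constant coefficients and the variation-of-constants formula gives the $\varphi_1$ expression. Exact flows of autonomous ODEs satisfy the group property $\varphi_{s}\circ\varphi_{t}=\varphi_{s+t}$ for all real $s,t$; in particular $(\varphi_t)^{-1}=\varphi_{-t}$. The formulas \eqref{2.3}, \eqref{2.4} remain valid for negative $t$, because $\varphi_1$ is entire, so $\varphi_1(s)$ is defined at $s=0$ and for all matrix arguments.

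Then I compute
\[
\Phi_{-h}\circ\Phi_h=\varphi_{-h/2}^{\mathcal{S}_{x_0}}\circ\varphi_{-h}^{\mathcal{T}_{x_0}}\circ\varphi_{-h/2}^{\mathcal{S}_{x_0}}\circ\varphi_{h/2}^{\mathcal{S}_{x_0}}\circ\varphi_{h}^{\mathcal{T}_{x_0}}\circ\varphi_{h/2}^{\mathcal{S}_{x_0}},
\]
and collapse the composition from the middle outward using the group property. This gives the identity, hence $\Phi_{-h}=\Phi_h^{-1}$, which is the definition of a symmetric method. Finally I note that expanding the composition reproduces \eqref{2.5}--\eqref{2.6}. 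The intermediate point after the first half step of $\mathcal{S}$ has position $\bar z_n$, and the remaining two flows give the stated updates. This confirms that the explicit scheme and the composition coincide.

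The only delicate point is that $\widehat{B}_0$ must be held fixed; this is what makes the sub-flows genuine autonomous flows. If one instead reinterpreted $x_0$ as the current step's starting point $x^n$, the reference would differ between forward and backward steps and symmetry could fail. I would state explicitly that $x_0$ is the initial datum and is independent of $n$ and $h$.
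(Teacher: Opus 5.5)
Your proof is correct, but it takes a different route from the paper's. The paper never uses the flow structure. It substitutes $(n+1,-h)$ for $(n,h)$ in \eqref{2.5} and checks by hand that the backward intermediate point $\widehat{z}_{n+1}=x^{n+1}-\frac12 h\varphi_1(-\frac{h\widehat{B}_0}{2\varepsilon})v^{n+1}$ equals $\bar{z}_n$. It then uses the identities $\varphi_1(-X)\mathrm{e}^{X}=\varphi_1(X)$ and $\mathrm{e}^{Y}\varphi_1(-Y)=\varphi_1(Y)$ to solve the backward relations for $(x^{n+1},v^{n+1})$ and recover \eqref{2.5} term by term.

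You argue structurally instead. Because $\widehat{B}_0$ is frozen at the fixed datum $x_0$, the two sub-vector fields are autonomous and \eqref{2.3}--\eqref{2.4} are their globally defined exact flows. The palindromic composition therefore collapses to the identity by the group property, and your check that the composition reproduces \eqref{2.5}--\eqref{2.6} links this back to the explicit scheme.

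The two arguments express the same fact at different levels. The paper's $\bar{z}_n=\widehat{z}_{n+1}$ and its two $\varphi_1$ identities are exactly the coordinate forms of $\varphi^{\mathcal{S}_{x_0}}_{-t}\circ\varphi^{\mathcal{S}_{x_0}}_{t}=\mathrm{id}$ and $\varphi^{\mathcal{T}_{x_0}}_{-t}\circ\varphi^{\mathcal{T}_{x_0}}_{t}=\mathrm{id}$.

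Your version has several advantages:
\begin{itemize}
\item it is shorter;
\item it avoids tracking which of the forward and backward relations is being assumed at each step;
\item it applies unchanged to any palindromic composition of these sub-flows;
\item it makes explicit why freezing the field at $x_0$ rather than at $x^n$ is what gives symmetry.
\end{itemize}
The paper's version is self-contained at the level of the explicit update and does not need the sub-steps to be recognized as exact flows.
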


\begin{proof}
	Swapping $(n,h)$ with $(n+1,-h)$ in \eqref{2.5} leads to the following result
	\begin{align}
		\displaystyle x^{n}= & x^{n+1}-\frac{h}{2}\varphi_{1}\biggl(-\frac{h\widehat{B}_{0}}{2\varepsilon}\biggr)
		\Bigl(I+\mathrm{e}^{-h\frac{\widehat{B}_{\widehat{z}_{n+1}}-\widehat{B}_{0}}{\varepsilon}}
		\mathrm{e}^{\frac{-h\widehat{B}_{0}}{2\varepsilon}}\Bigr)v^{n+1}  \notag \\ \displaystyle & + \frac{h^{2}}{2}\varphi_{1}\biggl(\frac{-h\widehat{B}_{0}}{2\varepsilon}\biggr)
		\varphi_{1}\biggl(-h\frac{\widehat{B}_{\widehat{z}_{n+1}}-\widehat{B}_{0}}{\varepsilon}\biggr)
		E(\widehat{z}_{n+1}), \label{eq1}\\
		\displaystyle 
		v^{n}= & \mathrm{e}^{\frac{-h\widehat{B}_{0}}{2\varepsilon}}
		\mathrm{e}^{-h\frac{\widehat{B}_{\widehat{z}_{n+1}}-\widehat{B}_{0}}{\varepsilon}}
		\mathrm{e}^{\frac{-h\widehat{B}_{0}}{2\varepsilon}}v^{n+1}  - h\mathrm{e}^{\frac{-h\widehat{B}_{0}}{2\varepsilon}}
		\varphi_{1}\biggl(-h\frac{\widehat{B}_{\widehat{z}_{n+1}}-\widehat{B}_{0}}{\varepsilon}\biggr)
		E(\widehat{z}_{n+1}), \label{eq2}
	\end{align}
	where
	\begin{equation*}
		\widehat{z}_{n+1} = x^{n+1} - \frac{1}{2}h\varphi_{1} \biggl(\frac{-h\widehat{B}_{0}}{2\varepsilon}\biggr)v^{n+1},\quad
		\widehat{B}_{\widehat{z}_{n+1}}=\widehat{B}(\varepsilon^{q} \widehat{z}_{n+1}).
	\end{equation*}
	From \eqref{2.5} and the definition of \(\varphi_{1}(s)=({\rm e}^{s}-1)/s\), we can first derive
	\begin{equation*}
		x^{n}+\frac{1}{2}h\varphi_{1}\biggl(\frac{h\widehat{B}_{0}}{2\varepsilon}\biggr)v^{n} = 
		x^{n+1} - \frac{1}{2}h\varphi_{1} \biggl(\frac{-h\widehat{B}_{0}}{2\varepsilon}\biggr)v^{n+1},
	\end{equation*}
	i.e., $\bar{z}_{n} = \widehat{z}_{n+1}.$
	Therefore, \eqref{eq2} can be rewritten as
	\begin{align}
		\displaystyle v^{n+1} = & \mathrm{e}^{\frac{h\widehat{B}_{0}}{2\varepsilon}}
		\mathrm{e}^{h\frac{\widehat{B}_{\widehat{z}_{n+1}}-\widehat{B}_{0}}{\varepsilon}}
		\mathrm{e}^{\frac{h\widehat{B}_{0}}{2\varepsilon}}v^{n}
		+ h \mathrm{e}^{\frac{h\widehat{B}_{0}}{2\varepsilon}}
		\mathrm{e}^{h\frac{\widehat{B}_{\widehat{z}_{n+1}}-\widehat{B}_{0}}{\varepsilon}}
		\varphi_{1}\biggl(-h\frac{\widehat{B}_{\widehat{z}_{n+1}}-\widehat{B}_{0}}{\varepsilon}\biggr)
		E(\widehat{z}_{n+1})  \notag \\
		= & \mathrm{e}^{\frac{h\widehat{B}_{0}}{2\varepsilon}}
		\mathrm{e}^{h\frac{\widehat{B}_{\bar{z}_{n}}-\widehat{B}_{0}}{\varepsilon}}
		\mathrm{e}^{\frac{h\widehat{B}_{0}}{2\varepsilon}}v^{n} 
		\displaystyle + h\mathrm{e}^{\frac{h\widehat{B}_{0}}{2\varepsilon}}
		\varphi_{1}\biggl(h\frac{\widehat{B}_{\bar{z}_{n}}-\widehat{B}_{0}}{\varepsilon}\biggr)
		E(\bar{z}_{n}). \label{eq4}
	\end{align}
	Similarly, due to $\bar{z}_{n} = \widehat{z}_{n+1}$ and \eqref{eq4}, \eqref{eq1} can be rewritten as
	\begin{align}
		x^{n+1} = & x^{n} +
		\frac{h}{2}\varphi_{1}\biggl(-\frac{h\widehat{B}_{0}}{2\varepsilon}\biggr)
		\Bigl(I+\mathrm{e}^{-h\frac{\widehat{B}_{\widehat{z}_{n+1}}-\widehat{B}_{0}}{\varepsilon}}
		\mathrm{e}^{\frac{-h\widehat{B}_{0}}{2\varepsilon}}\Bigr)v^{n+1} \notag \\
		&  - \frac{h^{2}}{2}\varphi_{1}\biggl(\frac{-h\widehat{B}_{0}}{2\varepsilon}\biggr)
		\varphi_{1}\biggl(-h\frac{\widehat{B}_{\widehat{z}_{n+1}}-\widehat{B}_{0}}{\varepsilon}\biggr)
		E(\widehat{z}_{n+1})   \notag \\
		= & x^{n} +
		\frac{h}{2}\varphi_{1}\biggl(-\frac{h\widehat{B}_{0}}{2\varepsilon}\biggr)
		\Bigl(I+\mathrm{e}^{-h\frac{\widehat{B}_{\bar{z}_{n}}-\widehat{B}_{0}}{\varepsilon}}
		\mathrm{e}^{\frac{-h\widehat{B}_{0}}{2\varepsilon}}\Bigr)  \notag \\
		& \,\, \Biggl[ \mathrm{e}^{\frac{h\widehat{B}_{0}}{2\varepsilon}}
		\mathrm{e}^{h\frac{\widehat{B}_{\bar{z}_{n}}-\widehat{B}_{0}}{\varepsilon}}
		\mathrm{e}^{\frac{h\widehat{B}_{0}}{2\varepsilon}}v^{n} 
		\displaystyle + h\mathrm{e}^{\frac{h\widehat{B}_{0}}{2\varepsilon}}
		\varphi_{1}\biggl(h\frac{\widehat{B}_{\bar{z}_{n}}-\widehat{B}_{0}}{\varepsilon}\biggr) E(\bar{z}_{n}) \Biggr] \notag \\
		& - \frac{h^{2}}{2}\varphi_{1}\biggl(\frac{-h\widehat{B}_{0}}{2\varepsilon}\biggr)
		\varphi_{1}\biggl(-h\frac{\widehat{B}_{\bar{z}_{n}}-\widehat{B}_{0}}{\varepsilon}\biggr)
		E(\bar{z}_{n}) \notag \\
		= & x^{n} +
		\frac{h}{2}\varphi_{1}\biggl(-\frac{h\widehat{B}_{0}}{2\varepsilon}\biggr)
		\Bigg[ \mathrm{e}^{\frac{h\widehat{B}_{0}}{2\varepsilon}}
		\mathrm{e}^{h\frac{\widehat{B}_{\bar{z}_{n}}-\widehat{B}_{0}}{\varepsilon}}
		\mathrm{e}^{\frac{h\widehat{B}_{0}}{2\varepsilon}}v^{n} +
		\mathrm{e}^{\frac{h\widehat{B}_{0}}{2\varepsilon}}v^{n}
		+ h\mathrm{e}^{\frac{h\widehat{B}_{0}}{2\varepsilon}}
		\varphi_{1}\biggl(h\frac{\widehat{B}_{\bar{z}_{n}}-\widehat{B}_{0}}{\varepsilon}\biggr) E(\bar{z}_{n}) \Biggr] \notag \\
		= & x^{n}+\frac{h}{2}\varphi_{1}
		\biggl(\frac{h\widehat{B}_{0}}{2\varepsilon}\biggr)
		\Bigl(I+\mathrm{e}^{h\frac{\widehat{B}_{\bar{z}_{n}}-\widehat{B}_{0}}{\varepsilon}}
		\mathrm{e}^{\frac{h\widehat{B}_{0}}{2\varepsilon}}\Bigr)v^{n}
		+\frac{h^{2}}{2}\varphi_{1}\biggl(\frac{h\widehat{B}_{0}}{2\varepsilon}\biggr)
		\varphi_{1}\biggl(h\frac{\widehat{B}_{\bar{z}_{n}}-\widehat{B}_{0}}{\varepsilon}\biggr)
		E(\bar{z}_{n}).
		\label{eq5}
	\end{align}
	\eqref{eq4} and \eqref{eq5} demonstrate that the S2-new scheme \eqref{2.5} is time symmetric. The proof is complete. 
\end{proof}

The convergence result for the proposed scheme is stated as follows. A rigorous proof will be presented in Section \ref{sec3}.

\begin{theorem}\label{th2.1}
	Assume that \( E(\cdot), B(\cdot) \in C^2(\mathbb{R}^3) \), and let \(T_0 > 0\) be the single time period in the periodic flow generated by \(e^{t \widehat{B}(0)}\). 
	Denote \( x^n \) and \( v^n \) be the numerical solutions obtained by applying the S2-new scheme to system \eqref{1.1} on the time interval \([0,T]\), where \( T > 0 \) is fixed.
	Then there exists a positive constant \( N_0 \) independent of \( \varepsilon \), such that for every integer \( N \ge N_0 \), by setting the time step size \( h = \frac{T_0}{N}\varepsilon \), the following error bounds hold:
	\begin{equation}
		|x^n - x(t_n)| \lesssim \varepsilon^{q-2} h^2, 
		\quad |v^n_{\parallel} - v_{\parallel}(t_n)| \lesssim \varepsilon^{q-2} h^2,
		\quad 1 \leq q \leq 2,  
		\quad 0 \leq n \leq \frac{T}{h}. \label{2.7}
	\end{equation}
	If the magnetic field is uniform, i.e., $B\equiv B_0$, the S2-new scheme has the following uniform error bounds
	\begin{equation}
		|x^n - x(t_n)| \lesssim   h^2, 
		\quad |v^n_{\parallel} - v_{\parallel}(t_n)| \lesssim   h^2, 
		\quad 0 \leq n \leq \frac{T}{h}. \label{2.7B0}
	\end{equation}
	The above \( v_{\parallel} \) represents the component of velocity \( v \) parallel to the magnetic field \( B \)
	\begin{equation*}
		v_{\parallel}(t) := \frac{B(\varepsilon^q x(t))}{|B(\varepsilon^q x(t))|} \left( \frac{B(\varepsilon^q x(t))}{|B(\varepsilon^q x(t))|} \cdot v(t) \right),  
	\end{equation*}
	and similarly for $v^n$ as
	\begin{equation*}
		v_{\parallel}^n := \frac{B(\varepsilon^q x^n)}{|B(\varepsilon^q x^n)|} \left( \frac{B(\varepsilon^q x^n)}{|B(\varepsilon^q x^n)|} \cdot v^n \right).
	\end{equation*}
\end{theorem}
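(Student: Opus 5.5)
We rescale time as $t=\varepsilon s$ and set $y(s)=x(\varepsilon s)$, $w(s)=v(\varepsilon s)$. System \eqref{1.1} becomes
\begin{equation*}
y'=\varepsilon w,\qquad w'=\widehat{B}(\varepsilon^q y)w+\varepsilon E(y),
\end{equation*}
posed on the long interval $s\in[0,T/\varepsilon]$. The step becomes $\tau=h/\varepsilon=T_0/N$, and S2-new is the Strang splitting of this rescaled system with respect to the frozen linear part $\widehat{B}_0$.

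Because $h=\frac{T_0}{N}\varepsilon$, the frozen propagator $\mathrm{e}^{s\widehat{B}_0}$ is exactly $T_0$-periodic after $N$ steps. Here we use $\widehat{B}_0=\widehat{B}(\varepsilon^q x_0)$, which we may reduce to $\widehat{B}(0)$ up to a harmless shift or after recentring $x_0$. The plan is to work in the twisted variables $\tilde w(s)=\mathrm{e}^{-s\widehat{B}_0}w(s)$ and $\tilde y(s)=y(s)$. In these variables the stiff rotation disappears, and the remaining vector field has size $\mathcal{O}(\varepsilon)$ from $\varepsilon E$ and $\varepsilon w$, plus $\mathcal{O}(\varepsilon^{q}|y-x_0|)$ from $\widehat{B}(\varepsilon^q y)-\widehat{B}_0$. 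The numerical scheme in twisted variables is then a one-step method whose increment is an exponential-type quadrature of this slow field.

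\textbf{Step 1 (local error).} Expand the exact and numerical one-step maps in twisted variables using the variation-of-constants formula. The local error is the difference between $\int_0^\tau \mathrm{e}^{-\sigma\widehat{B}_0}F(\cdot)\,d\sigma$ and its Strang-type quadrature; symmetry removes the first-order term. The local error therefore has the form $\tau^3$ times derivatives of the slow field, i.e. $\mathcal{O}(\varepsilon\tau^3)$ for the $E$-parts and $\mathcal{O}(\varepsilon^q\tau^3)$ for the magnetic-variation parts, using $E,B\in C^2$. The key point is that the leading local error term is $\tau^3 G(s_n)$, where $G$ oscillates through factors $\mathrm{e}^{\pm s_n\widehat{B}_0}$ and otherwise varies slowly.

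\textbf{Step 2 (summation over a period).} Sum the leading local errors over one period of $N$ steps. Because the oscillatory factors are trigonometric polynomials in $s\widehat{B}_0$ and the grid hits every period exactly, the discrete averages $\frac1N\sum_{k=0}^{N-1}\mathrm{e}^{k\tau\widehat{B}_0}$, and the analogous ones for second-order products, vanish on the components orthogonal to $B_0$ once $N\ge N_0$ avoids resonances (this is where $N_0$ enters). The remaining non-averaged contributions are those along $\ker\widehat{B}_0$, i.e. the parallel direction, together with slow-variation corrections of size $\varepsilon$ or $\varepsilon^q$ per period. Accumulating over $T/(\varepsilon T_0)$ periods, the position error, which is multiplied by $\varepsilon$ in $y'=\varepsilon w$, and the parallel velocity error should both come out as $\mathcal{O}(\varepsilon^{q-2}h^2)$: a per-period defect of $\varepsilon^{q}\tau^2$ in the magnetic-variation terms times $\varepsilon^{-1}$ periods gives $\varepsilon^{q-1}\tau^2=\varepsilon^{q-3}h^2$ for $\tilde w$, and the extra $\varepsilon$ in the position equation gives $\varepsilon^{q-2}h^2$. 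The same bookkeeping must be carried out carefully for $v_\parallel$. For $B\equiv B_0$ the magnetic-variation terms vanish identically, only the $\mathcal{O}(\varepsilon\tau^3)$ $E$-terms remain, and we get the uniform bound $h^2$.

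\textbf{Step 3 (stability).} Close with a discrete Gronwall argument on the global error. The propagation matrix in twisted variables is $I+\mathcal{O}(\varepsilon\tau)+\mathcal{O}(\varepsilon^q\tau)$ per step, hence bounded over $T/h$ steps. We combine this with an a priori bound $|x^n-x_0|\lesssim 1$ obtained by bootstrap/induction, which keeps $\varepsilon^{q}|y-x_0|$ small. Finally, convert the twisted-variable errors back: $|x^n-x(t_n)|=|\tilde y^n-\tilde y(s_n)|$, and the parallel part commutes with $\mathrm{e}^{s\widehat{B}_0}$ up to $\mathcal{O}(\varepsilon^q)\times$(position error).

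The main obstacle is Step 2. It requires the precise cancellation of the oscillatory local errors over each period and a cumulative bound on the non-cancelling residual terms. The perpendicular velocity error genuinely does not average, which is why only $x$ and $v_\parallel$ are claimed, and separating these components cleanly in the error recursion is delicate.
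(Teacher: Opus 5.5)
Your Step~2 bookkeeping does not give the claimed bounds. It fails at the passage from velocity error to position error. In the rescaled problem the position error is $\varepsilon$ times the time integral of the velocity error over an interval of length $T/\varepsilon$. The factor $\varepsilon$ in $y'=\varepsilon w$ is therefore consumed by the length of the interval. A velocity error of size $\varepsilon^{q-1}\tau^2$ (your count for $\tilde w$) produces in general a position error of the same size, i.e.\ $\varepsilon^{q-3}h^2$, not $\varepsilon^{q-2}h^2$.

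What makes the position better than the velocity is that the coupling is oscillatory. With $N\tau=T_0$ (your $\tau$ is the paper's $\mathfrak{h}$) and $\widetilde{B}_{00}=B(0)/|B(0)|$,
\[
\varepsilon\tau\sum_{n=0}^{N-1}\mathrm{e}^{(n+\frac12)\tau\widehat{B}(0)}\,e_w^{0,m}
=\varepsilon\int_0^{T_0}\mathrm{e}^{s\widehat{B}(0)}\,ds\;e_w^{0,m}
=\varepsilon T_0\,\bigl(\widetilde{B}_{00}\cdot e_w^{0,m}\bigr)\widetilde{B}_{00}.
\]
So over each period the perpendicular velocity error, which genuinely is of order $\varepsilon\tau^2$ (i.e.\ $h^2/\varepsilon$), drops out, and only $e_{w,\parallel}$ drives the position. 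This is the core of the paper's argument. It sums the position recursion over one period, inserts the unrolled velocity recursion $e_w^{n,m}=\mathrm{e}^{n\tau\widehat{B}(0)}e_w^{0,m}+\cdots$, and arrives at \eqref{3.42}, i.e.\ $|e_z^{0,m+1}|-|e_z^{0,m}|\lesssim\varepsilon|e_{w,\parallel}^{0,m}|+\cdots$. You average only the local defects. Your Step~3 Gronwall with one-step propagator $I+\mathcal{O}(\varepsilon\tau)+\mathcal{O}(\varepsilon^q\tau)$ treats the coupling as non-oscillatory, so it cannot improve on a standard estimate.

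Consequently the whole improvement rests on $|e_{w,\parallel}|\lesssim\varepsilon^q\tau^2$. That is exactly what you defer (``the same bookkeeping must be carried out carefully for $v_\parallel$''), and by your own count it fails. You say the parallel components are the ones that do not average, and a per-period parallel defect $\varepsilon^q\tau^2$ over $\varepsilon^{-1}$ periods gives $\varepsilon^{q-3}h^2$, a factor $\varepsilon^{-1}$ too large. You need a per-period recursion $|e_{w,\parallel}^{0,m+1}|-|e_{w,\parallel}^{0,m}|\lesssim\varepsilon|e_z^{0,m}|+\varepsilon^{q+1}\tau^2$, as in \eqref{3.47}, closed together with \eqref{3.42} by a Gronwall argument over the $M\approx T/(\varepsilon T_0)$ periods. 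Consider the leading magnetic defect: the Strang error of $\mathrm{e}^{\tau(\widehat{B}_0+C)}$, with $|C|\lesssim\varepsilon^q$, of size $\varepsilon^q\tau^3$. It generally has a parallel component of that same size, linear in $w_\perp$. So this recursion cannot come from counting per-step sizes; you must show that this component cancels over each period.

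Your uniform-field claim has the same problem. The $E$-defects are $\mathcal{O}(\varepsilon\tau^3)$, and your size is the correct one, e.g.\ $\varepsilon\tau(\varphi_1(\tau\widehat{B})-\mathrm{e}^{\tau\widehat{B}/2})E=\tfrac{1}{24}\varepsilon\tau^3\widehat{B}^2E+\cdots$. Through the coupling above they feed $\mathcal{O}(\varepsilon^2\tau^2)$ per period into the position, i.e.\ $h^2/\varepsilon$ in total. You get $h^2$ only because this is cancelled by the $\mathcal{O}(\varepsilon^2\tau^3)$ position defect of the scheme (exactly, for constant $E$). That is a computation on the coupled pair which your plan never makes.

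Finally, ``recentring $x_0$'' is not available. $T_0$ is tied to $B(0)$, so the grid is periodic for $\mathrm{e}^{s\widehat{B}(0)}$ but not for $\mathrm{e}^{s\widehat{B}_0}$. The paper absorbs this $\mathcal{O}(\varepsilon^q)$ mismatch in terms $\delta_z,\delta_w$ proportional to $e_w$. That requires an a priori bound on the full velocity error (Lemma~\ref{le3.1}), not just $|x^n-x_0|\lesssim 1$.
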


\begin{remark}
	The primary contribution of this paper is the redesign of the traditional Strang splitting method, resulting in a novel splitting scheme \eqref{2.5}  that achieves the improved error bounds \eqref{2.7}-\eqref{2.7B0}. 	For equation \eqref{1.1} under the strong magnetic field \eqref{BX}, previous work \cite{ref12} analyzed two splitting methods, S2-AVF and S2-VP \cite{ref11}, demonstrating that both methods yield an error bound of $\mathcal{O}(\varepsilon^{-1} h^2)$.
	In contrast, our S2-new scheme achieves an improved error bound of $\mathcal{O}(\varepsilon^{q-2} h^2)$ for $q \in [1,2]$.
	Furthermore, even in the case of $q=1$, the numerical experiments presented in the following subsection demonstrate that the S2-new scheme outperforms the S2-VP scheme.
\end{remark}

\subsection{Numerical experiments}\label{subsec2.2}
To illustrate the numerical performance of S2-new \eqref{2.5}  for different values of $q \in [1,2]$, we shall conduct numerical experiments in this subsection. 
The CPD is solved numerically up to the final time $T = t_{N} = 1$, and the relative errors are calculated as follows:
\begin{align}
	& errx := \frac{|x^N - x(t_N)|}{|x(t_N)|}, 
	\quad errv_{\parallel} := \frac{|v_{\parallel}^N - v_{\parallel}(t_N)|}{|v_{\parallel}(t_N)|},  \label{2.15}  \\
	& \hspace{1.5cm} error := errx + errv_{\parallel}.  
	\label{2.16}
\end{align}
The energy error is as follows:
\begin{equation}
	e_{H} := \frac{|H(x^{n},v^{n}) - H(x^{0},v^{0})|}{|H(x^{0},v^{0})|}, 
	\quad 0 \leq n \leq N. 
	\label{2.17}
\end{equation}
And the reference solutions are obtained using MATLAB's “ode45”.

For comparison, we considered a splitting method in the following from \cite{ref11}:
\begin{equation*}
	\Phi_{t}^{1}:\quad \frac{d}{dt}\begin{pmatrix}
		x \\ 
		v
	\end{pmatrix}
	=
	\begin{pmatrix}
		v \\
		0
	\end{pmatrix},
	\qquad
	\Phi_{t}^{2}:\quad \frac{d}{dt}\begin{pmatrix}
		x \\ 
		v
	\end{pmatrix}
	=
	\begin{pmatrix}
		0 \\
		\dfrac{1}{\varepsilon}v\times B(\varepsilon^{q} x)+E(x)
	\end{pmatrix},
\end{equation*}
where both subflows have exact integrators \(\varphi_{t}^{1},\varphi_{t}^{2}\). 
Denoting the following Strang splitting method
\begin{equation*}
	\begin{pmatrix}x^{n+1}\\ v^{n+1}\end{pmatrix}
	= \varphi_{h/2}^{1}\circ\varphi_{h}^{2}\circ\varphi_{h/2}^{1}
	\begin{pmatrix}x^{n}\\ v^{n}\end{pmatrix}
\end{equation*}
as S2-VP, we obtain
\begin{align}
	\begin{split}
		x^{n+1} & = x^{n}+\frac{h}{2} v^{n} + \frac{h}{2} v^{n+1},  \quad n \geq 0,\\
		v^{n+1} & = \mathrm{e}^{\frac{h}{\varepsilon} \widehat{B}(\varepsilon^{q} (x^{n} + \frac{h}{2} v^{n}))} v^{n} +h \varphi_{1}\biggl(\frac{h}{\varepsilon} 
		\widehat{B}(\varepsilon^{q} (x^{n} + \frac{h}{2} v^{n}))   \biggr) 
		E(x^{n} + \frac{h}{2} v^{n}).
	\end{split}
	\label{2.19}
\end{align}

\begin{problem}\label{pro1}
	(\textbf{Uniform strong magnetic field.})	The first illustrative numerical experiment is devoted to the charged-particle motion in a uniform strong magnetic field
	\begin{equation*}
		\frac{1}{\varepsilon}B=\frac{1}{\varepsilon}
		(1,0,0.5)^{\intercal},
	\end{equation*}
	and the electric field \(E(x)=-\nabla U(x)\) with the potential \(U(x)=\dfrac{1}{\sqrt{x_{1}^{2}+x_{2}^{2}+x_{3}^{2}}}\). 
	We choose the initial values as \(x(0)=(0,1,0.1)^{\intercal}\) and \(v(0)=(0.09,0.05,0.2)^{\intercal}\).
\end{problem}

The relative errors \eqref{2.15}-\eqref{2.16} and the energy errors \eqref{2.17} of S2-new \eqref{2.5} and S2-VP \eqref{2.19} are shown in Figs.~\ref{S2_C0}-\ref{E_C0}. From these results, we can obviously observe the following behaviour:
\begin{enumerate}
	\item Two splitting schemes S2-new and S2-VP demonstrate a second-order error bound of $h$ for varying values of $\varepsilon \in (0,1)$ in the position $x$ and parallel velocity $v_{\parallel}$. 
	
	\item The  scheme S2-new shows that the uniform error bound is independent of $\varepsilon$, which verifies the theoretical result in \eqref{2.7B0} of Theorem \ref{th2.1}.
	In contrast, the  scheme S2-VP shows  a dependence on   $\varepsilon^{-1}$. 
	\item  Fig.~\ref{E_C0} shows that both splitting schemes nearly preserve the energy over long times.
\end{enumerate}
 
\begin{figure}[t!] 
	\centering
	\begin{subfigure}{0.328\textwidth}
		\centering
		\includegraphics[width=\linewidth]{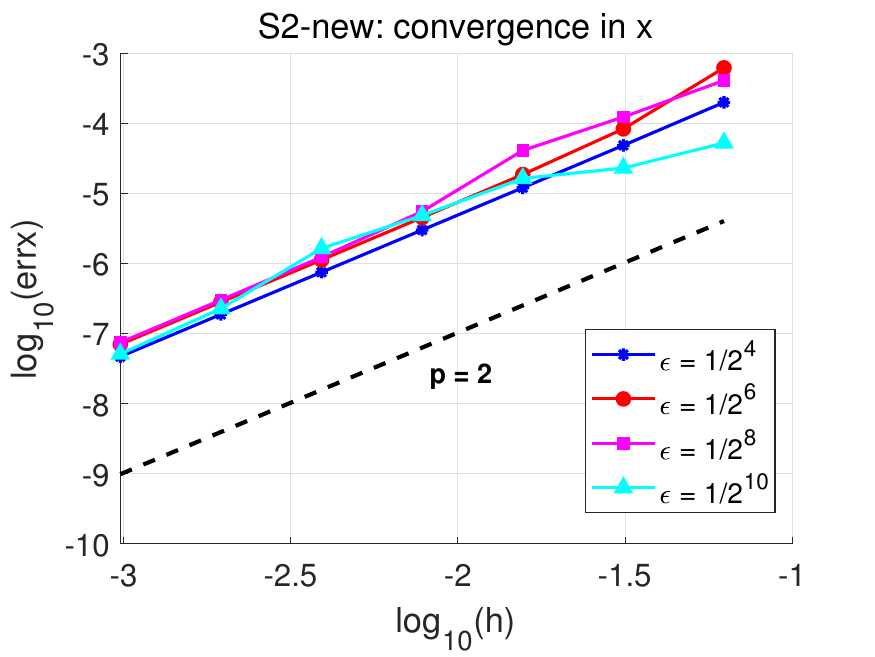}
	\end{subfigure}
	\begin{subfigure}{0.328\textwidth}
		\centering
		\includegraphics[width=\linewidth]{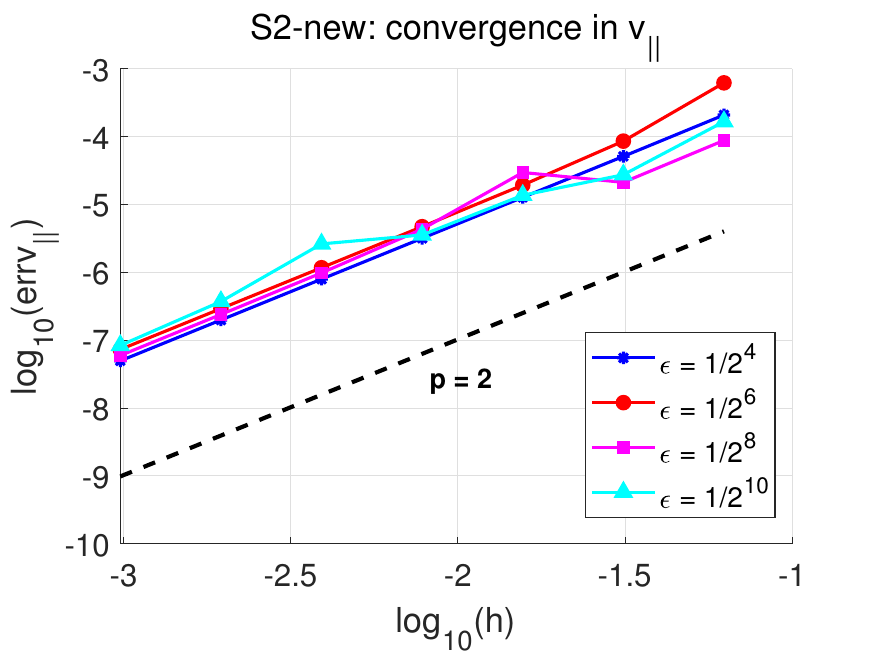}  
	\end{subfigure}
	\begin{subfigure}{0.328\textwidth}
		\centering
		\includegraphics[width=\linewidth]{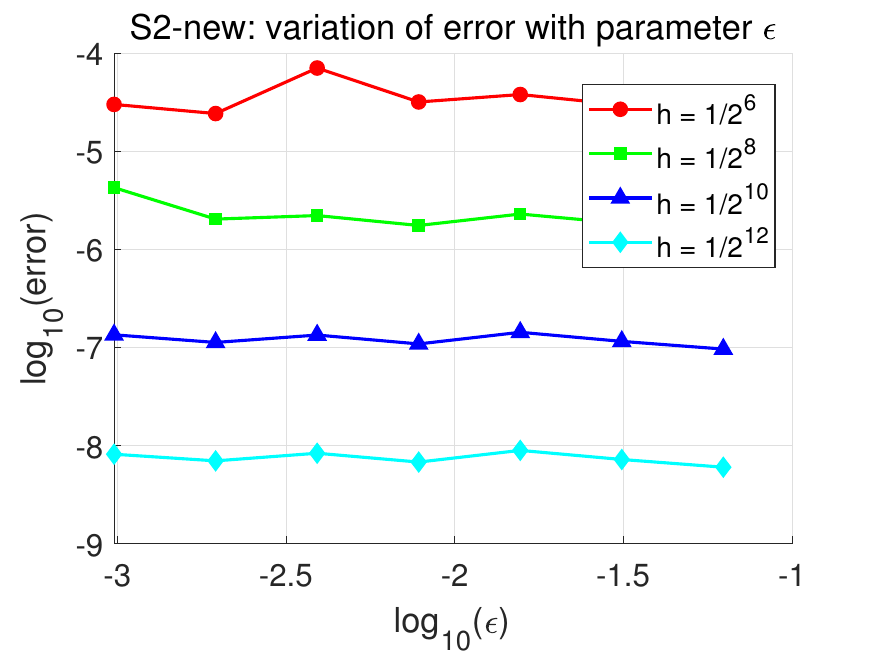}
	\end{subfigure}
	\caption{Problem \ref{pro1}. The error \eqref{2.15} (left and middle) of S2-new with $h = 1/2^{k}$ for $k=4,...,10$ under different $\varepsilon$ in a uniform strong magnetic field. The error \eqref{2.16} (right) of S2-new with $\varepsilon = 1/2^{k}$ for $k=4,...,10$ under different $h$ in a uniform strong magnetic field.}
	\label{S2_C0}
\end{figure}
\begin{figure}[t!] 
	\centering
	\begin{subfigure}{0.328\textwidth}
		\centering
		\includegraphics[width=\linewidth]{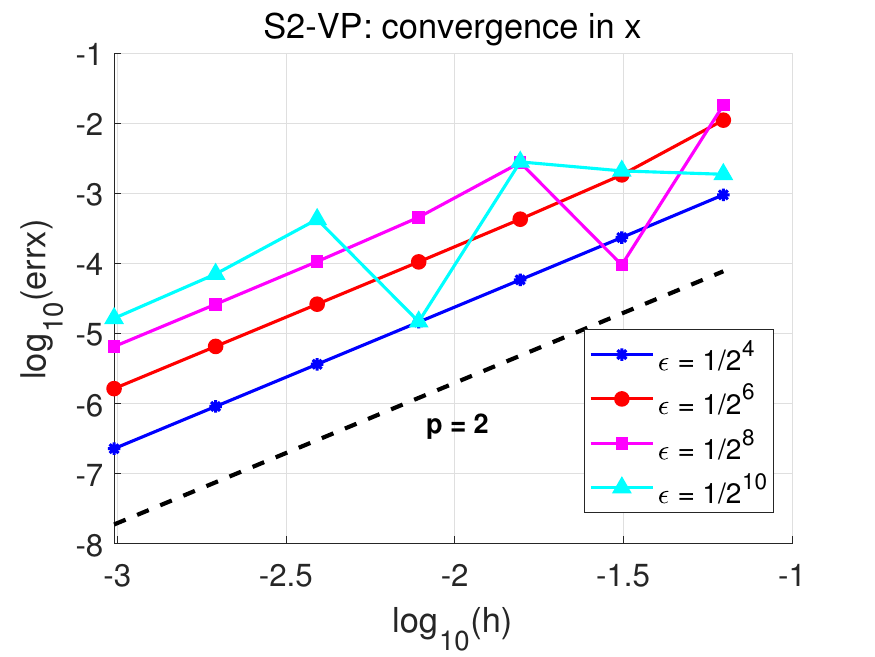}
	\end{subfigure}
	\begin{subfigure}{0.328\textwidth}
		\centering
		\includegraphics[width=\linewidth]{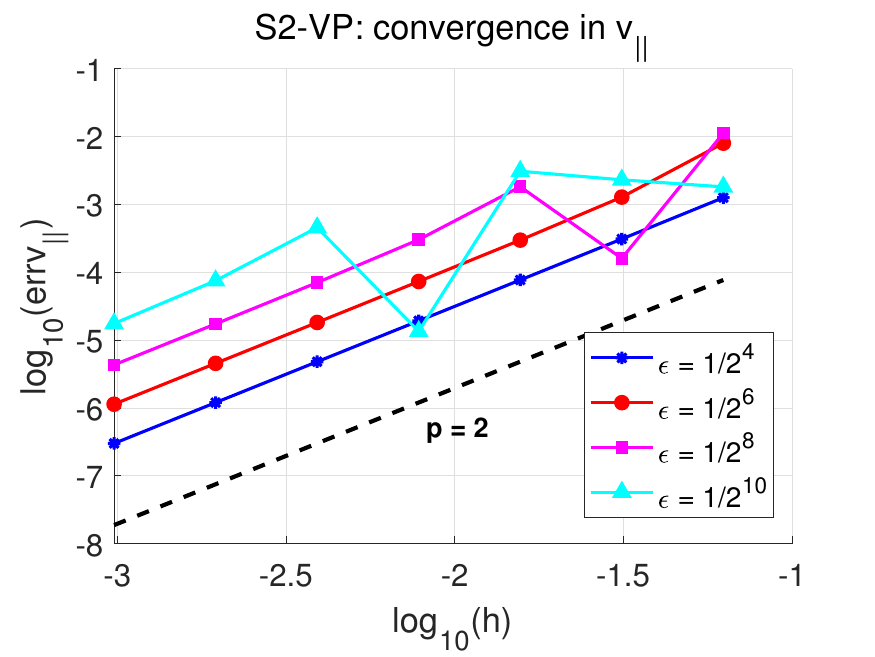}  
	\end{subfigure}
	\begin{subfigure}{0.328\textwidth}
		\centering
		\includegraphics[width=\linewidth]{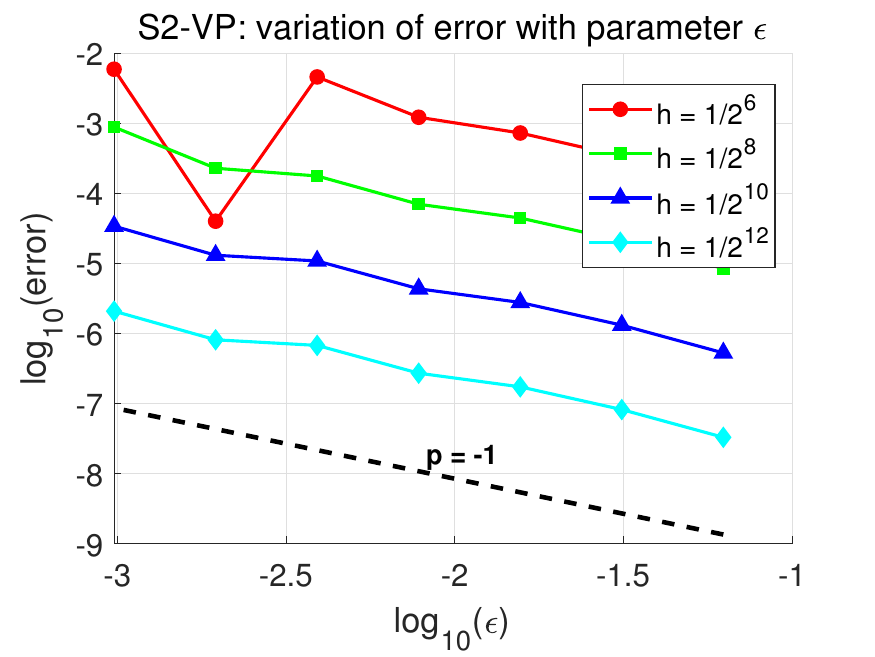}
	\end{subfigure}
	\caption{Problem \ref{pro1}. The error \eqref{2.15} (left and middle) of S2-VP with $h = 1/2^{k}$ for $k=4,...,10$ under different $\varepsilon$ in a uniform strong magnetic field. The error \eqref{2.16} (right) of S2-VP with $\varepsilon = 1/2^{k}$ for $k=4,...,10$ under different $h$ in a uniform strong magnetic field.}
	\label{VP_C0}
\end{figure}

\begin{figure}[t!] 
	\centering
	\begin{subfigure}{0.328\textwidth}
		\centering
		\includegraphics[width=\linewidth]{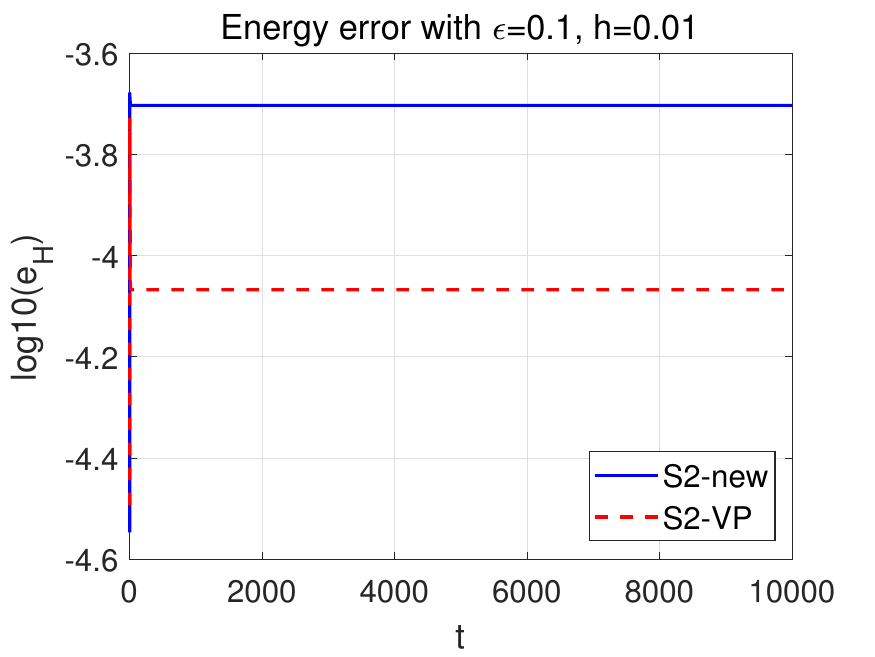}
	\end{subfigure}
	\begin{subfigure}{0.328\textwidth}
		\centering
		\includegraphics[width=\linewidth]{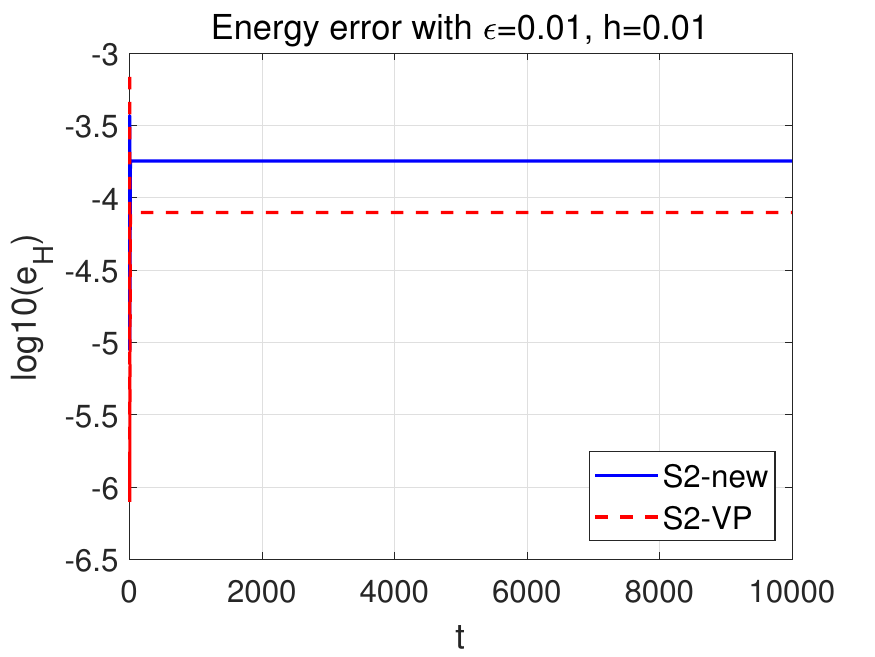}  
	\end{subfigure}
	\begin{subfigure}{0.328\textwidth}
		\centering
		\includegraphics[width=\linewidth]{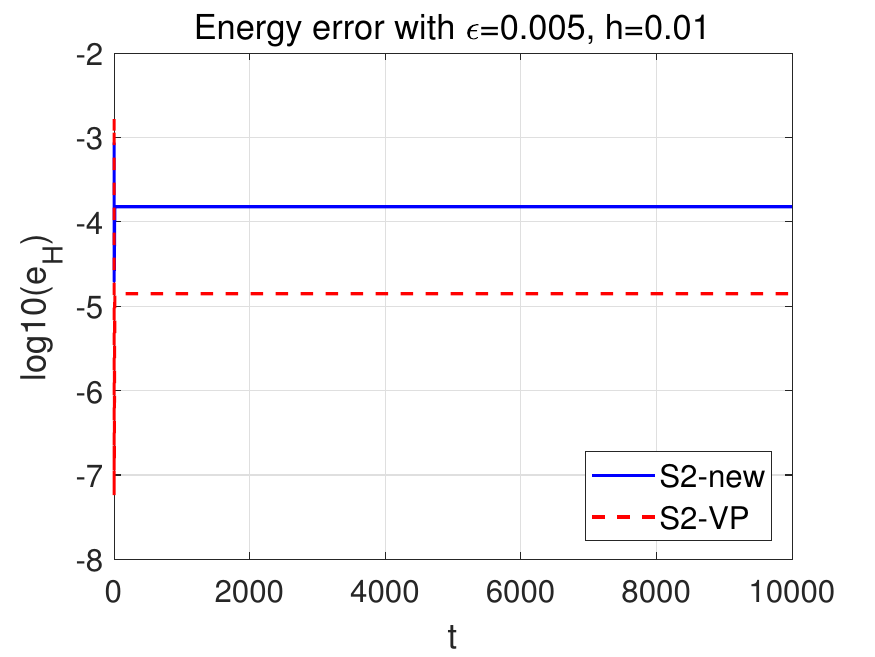}  
	\end{subfigure}
	\caption{Problem \ref{pro1}. Evolution of the energy error \eqref{2.17} as function of time $t$.}
	\label{E_C0}
\end{figure}

\begin{problem}\label{pro2}
	(\textbf{Strong magnetic field with $q=2$.})
	In this problem, we illustrate the behaviour of the numerical schemes for solving \eqref{1.1} with $q=2$. Consider the following strong magnetic field
	\begin{equation*}
		\frac{1}{\varepsilon}B=\frac{1}{\varepsilon}
		\begin{pmatrix}
			1 - \sin(\varepsilon^2 x_{2})/2 \\ 
			1 + \cos(\varepsilon^2 x_{3})/2 \\
			1 + \cos(\varepsilon^2 x_{1})/2
		\end{pmatrix},
	\end{equation*}
	and the electric field \(E(x)=-\nabla U(x)\) with the potential \(U(x)=\dfrac{1}{\sqrt{x_{1}^{2}+x_{2}^{2}+x_{3}^{2}}}\). The initial values are \(x(0)=(1/6,1/8,1/4)^{\intercal}\) and \(v(0)=(1/5,1/3,1/2)^{\intercal}\).
\end{problem}

The relative errors \eqref{2.15}-\eqref{2.16} and the energy errors \eqref{2.17} of S2-new \eqref{2.5} and S2-VP \eqref{2.19} are shown in Figs.~\ref{S2_C2}-\ref{E_C2}. Fig.~\ref{S2_C2} shows that when $q=2$, the S2-new has a second-order error bound with respect to the step size $h$, and this error bound is independent of $\varepsilon$, which verifies the theoretical result in \eqref{2.7} of Theorem \ref{th2.1}.
And compared with the S2-VP (see Fig.~\ref{VP_C2}), our scheme demonstrates superior performance.
Fig.~\ref{E_C2} further illustrates the long-term energy near preservation of both schemes.

\begin{figure}[t!] 
	\centering
	\begin{subfigure}{0.328\textwidth}
		\centering
		\includegraphics[width=\linewidth]{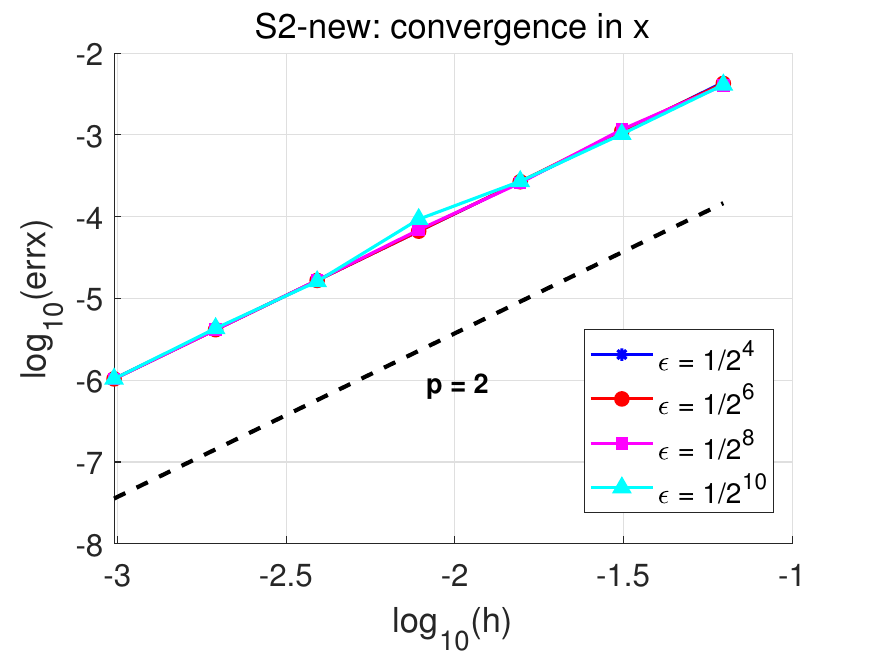}
	\end{subfigure}
	\begin{subfigure}{0.328\textwidth}
		\centering
		\includegraphics[width=\linewidth]{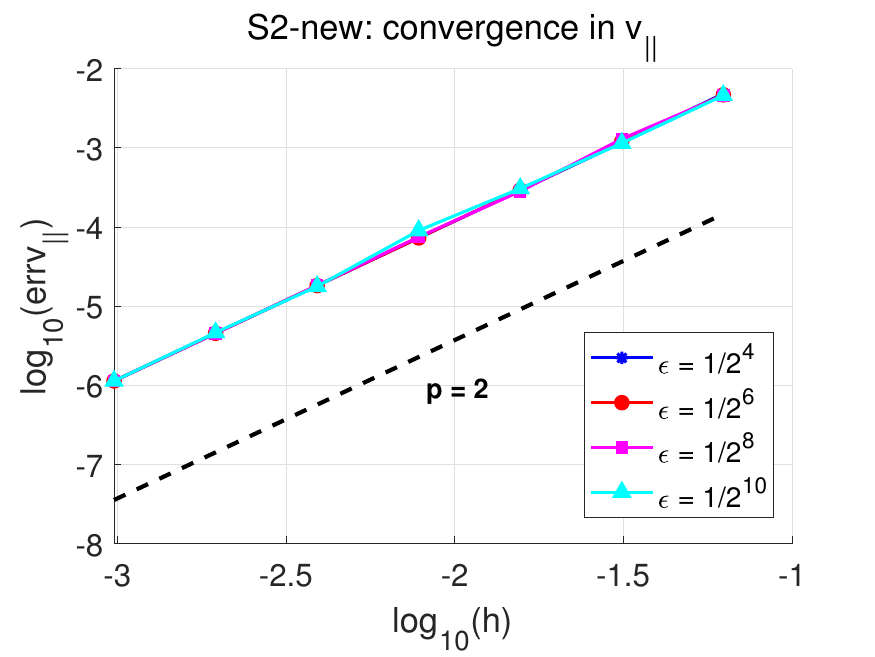}  
	\end{subfigure}
	\begin{subfigure}{0.328\textwidth}
		\centering
		\includegraphics[width=\linewidth]{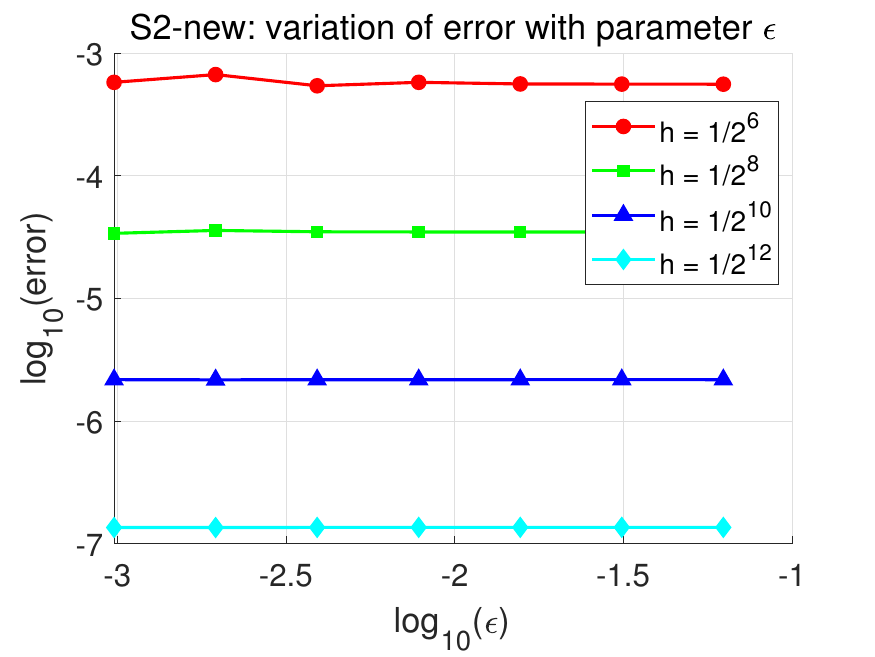}
	\end{subfigure}
	\caption{Problem \ref{pro2}. The error \eqref{2.15} (left and middle) of S2-new with $h = 1/2^{k}$ for $k=4,...,10$ under different $\varepsilon$ in \eqref{1.1} with $q=2$. The error \eqref{2.16} (right) of S2-new with $\varepsilon = 1/2^{k}$ for $k=4,...,10$ under different $h$ in \eqref{1.1} with $q=2$.}
	\label{S2_C2}
\end{figure}

\begin{figure}[t!] 
	\centering
	\begin{subfigure}{0.328\textwidth}
		\centering
		\includegraphics[width=\linewidth]{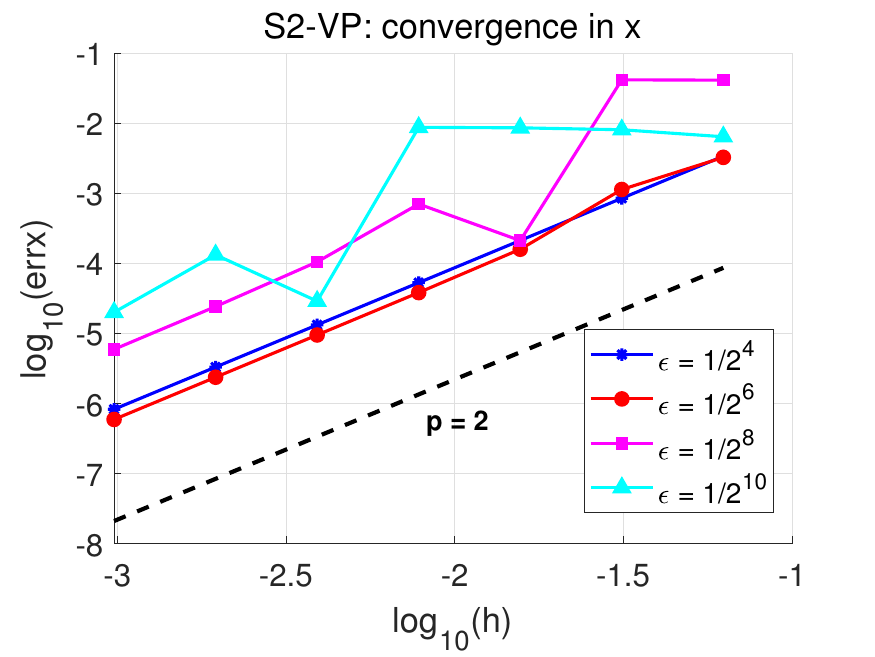}
	\end{subfigure}
	\begin{subfigure}{0.328\textwidth}
		\centering
		\includegraphics[width=\linewidth]{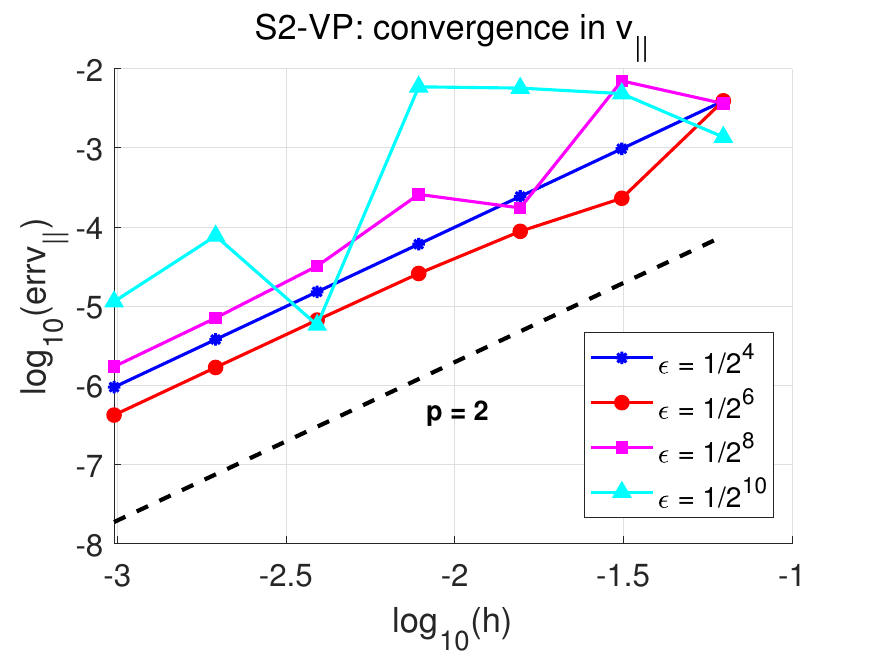}  
	\end{subfigure}
	\begin{subfigure}{0.328\textwidth}
		\centering
		\includegraphics[width=\linewidth]{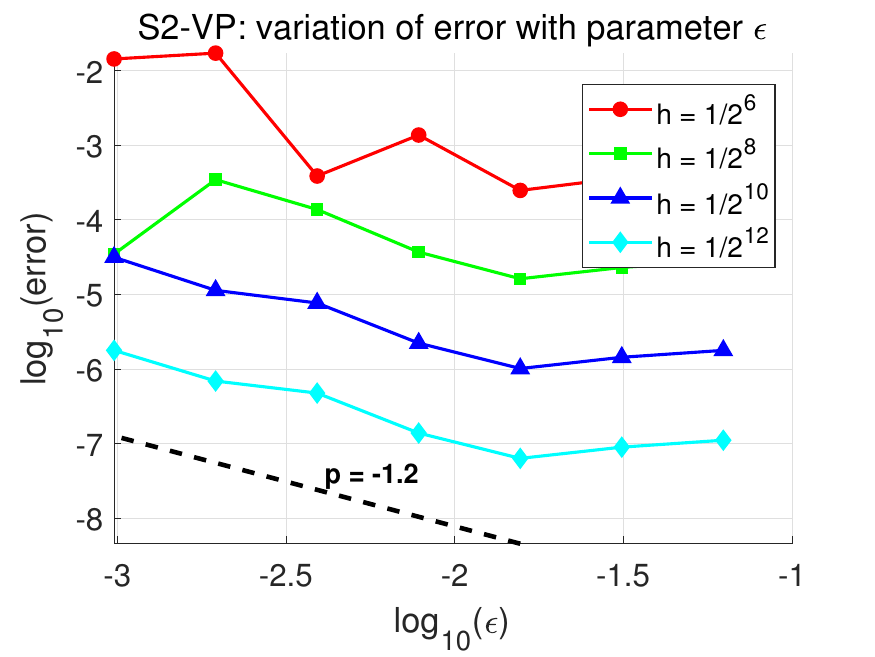}
	\end{subfigure}
	\caption{Problem \ref{pro2}. The error \eqref{2.15} (left and middle) of S2-VP with $h = 1/2^{k}$ for $k=4,...,10$ under different $\varepsilon$ in \eqref{1.1} with $q=2$. The error \eqref{2.16} (right) of S2-VP with $\varepsilon = 1/2^{k}$ for $k=4,...,10$ under different $h$ in \eqref{1.1} with $q=2$.}
	\label{VP_C2}
\end{figure}

\begin{figure}[t!] 
	\centering
	\begin{subfigure}{0.328\textwidth}
		\centering
		\includegraphics[width=\linewidth]{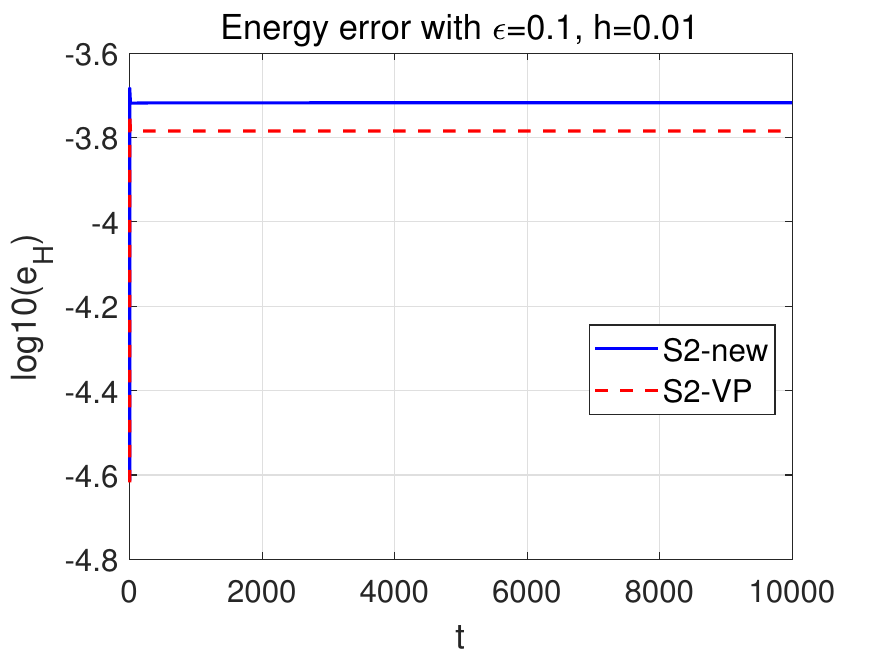}
	\end{subfigure}
	\begin{subfigure}{0.328\textwidth}
		\centering
		\includegraphics[width=\linewidth]{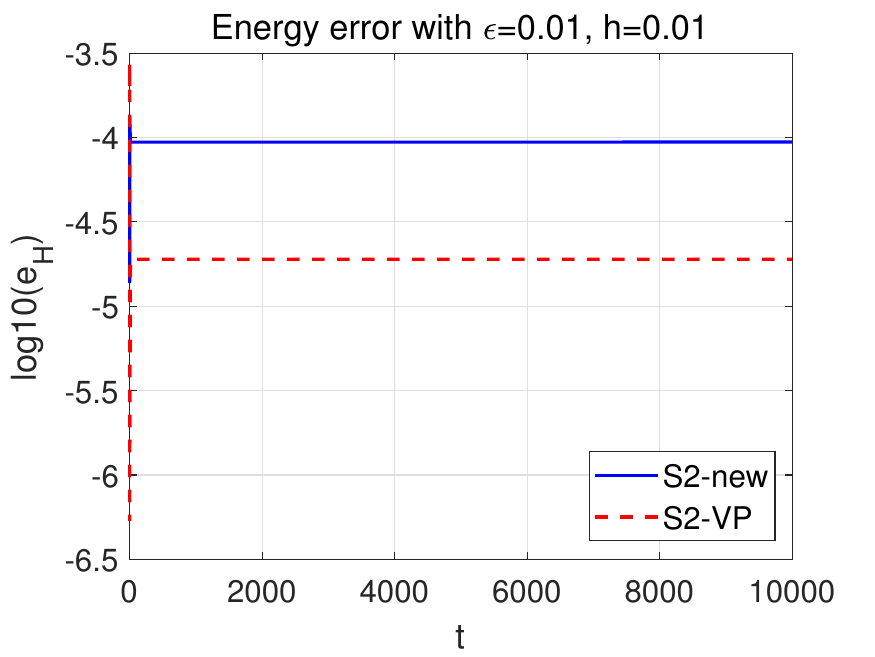}  
	\end{subfigure}
	\begin{subfigure}{0.328\textwidth}
		\centering
		\includegraphics[width=\linewidth]{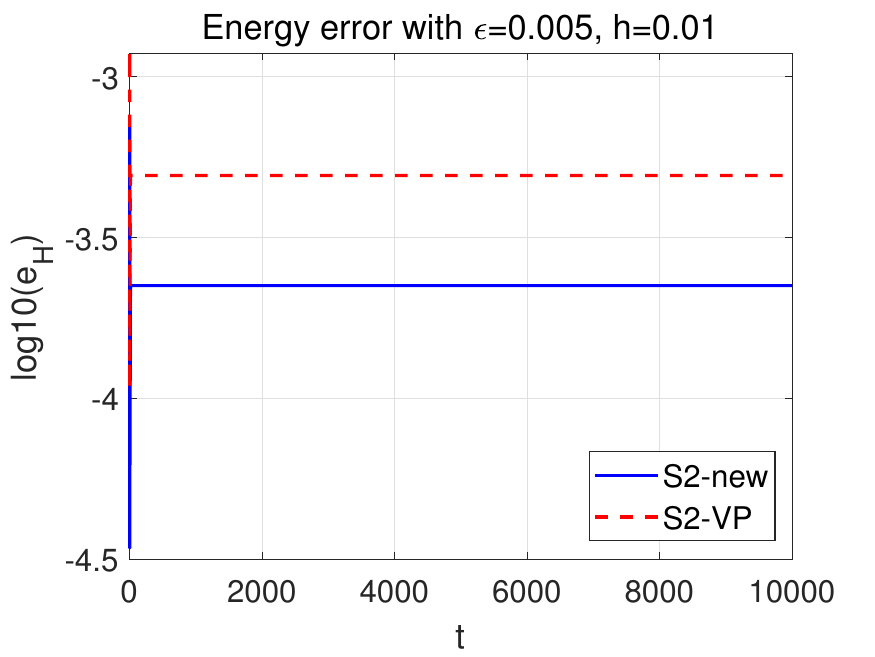}  
	\end{subfigure}
	\caption{Problem \ref{pro2}. Evolution of the energy error \eqref{2.17} as function of time $t$.}
	\label{E_C2}
\end{figure}

\begin{problem}\label{pro3}
	(\textbf{Strong magnetic field with $q=1.5$.})
	In \eqref{1.1} with $q=1.5$, we consider the following strong magnetic field
	\begin{equation*}
		\frac{1}{\varepsilon}B=\frac{1}{\varepsilon}
		\begin{pmatrix}
			1 - \sin(\varepsilon^{1.5} x_{2})/2 \\ 
			1 + \cos(\varepsilon^{1.5} x_{3})/2 \\
			1 + \cos(\varepsilon^{1.5} x_{1})/2
		\end{pmatrix},
	\end{equation*}
	and the electric field \(E(x)=-\nabla U(x)\) with the potential \(U(x)=x_{1}^{3}-x_{2}^{3}+\frac{1}{5}x_{1}^{4}+x_{2}^{4}+x_{3}^{4} \). The initial values are \(x(0)=(1/6,1/8,1/4)^{\intercal}\) and \(v(0)=(1/5,1/3,1/2)^{\intercal}\).
\end{problem}

The relative errors \eqref{2.15}-\eqref{2.16}are shown in Figs.~\ref{S2_C1.5}-\ref{VP_C1.5}. Fig.~\ref{S2_C1.5} shows that when $q=1.5$, the S2-new has a second-order error bound with respect to the step size $h$, and this error bound depends on   $\varepsilon^{-0.5}$, which performs better than the theoretical result \eqref{2.7} of Theorem \ref{th2.1}.
Fig.~\ref{VP_C1.5} shows that the S2-VP has a second-order error bound of $h$ depending on  $\varepsilon^{-1}$.
\begin{figure}[t!] 
	\centering
	\begin{subfigure}{0.328\textwidth}
		\centering
		\includegraphics[width=\linewidth]{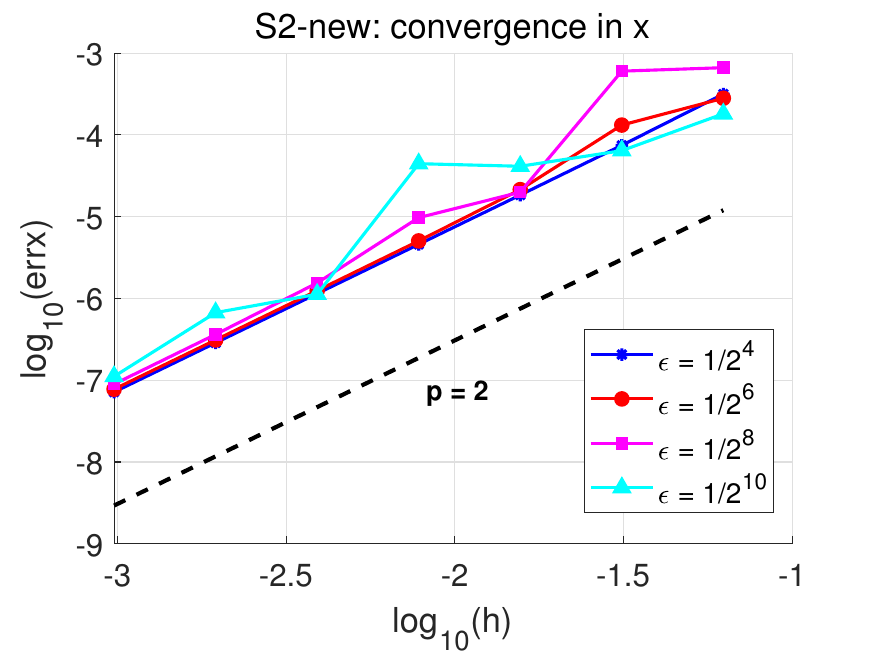}
	\end{subfigure}
	\begin{subfigure}{0.328\textwidth}
		\centering
		\includegraphics[width=\linewidth]{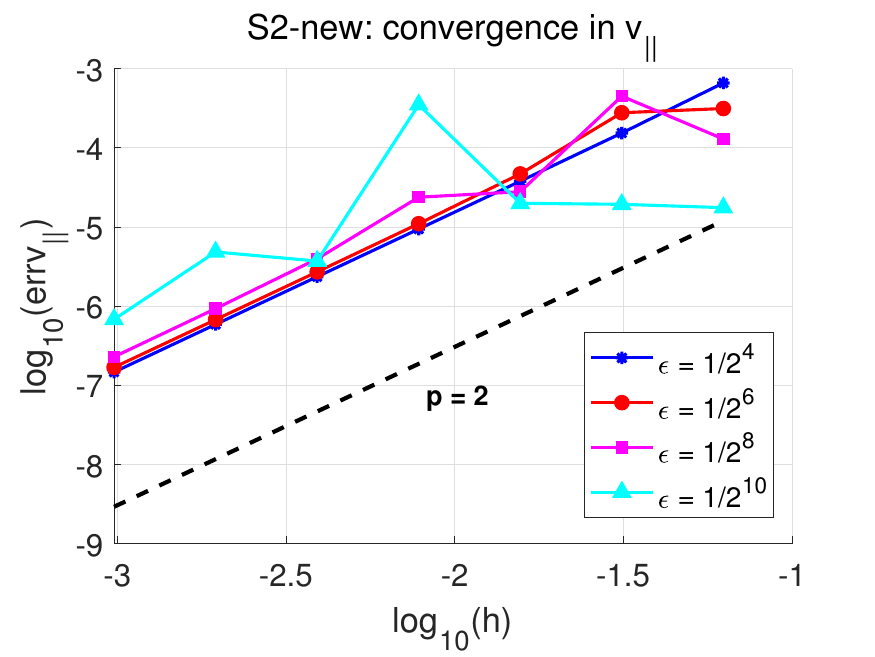}  
	\end{subfigure}
	\begin{subfigure}{0.328\textwidth}
		\centering
		\includegraphics[width=\linewidth]{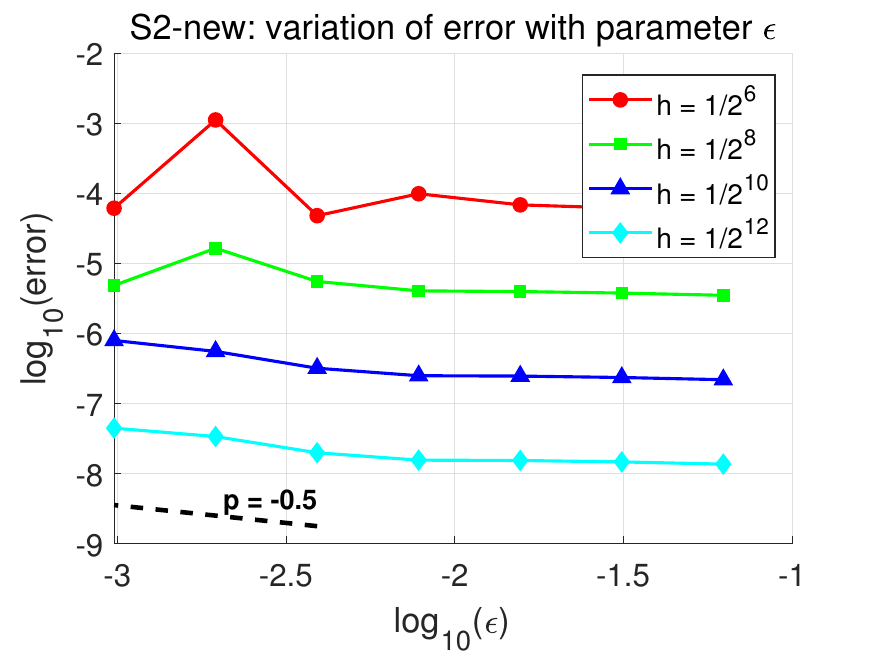}
	\end{subfigure}
	\caption{Problem \ref{pro3}. The error \eqref{2.15} (left and middle) of S2-new with $h = 1/2^{k}$ for $k=4,...,10$ under different $\varepsilon$ in \eqref{1.1} with $q=1.5$. The error \eqref{2.16} (right) of S2-new with $\varepsilon = 1/2^{k}$ for $k=4,...,10$ under different $h$ in \eqref{1.1} with $q=1.5$.}
	\label{S2_C1.5}
\end{figure}
\begin{figure}[t!] 
	\centering
	\begin{subfigure}{0.328\textwidth}
		\centering
		\includegraphics[width=\linewidth]{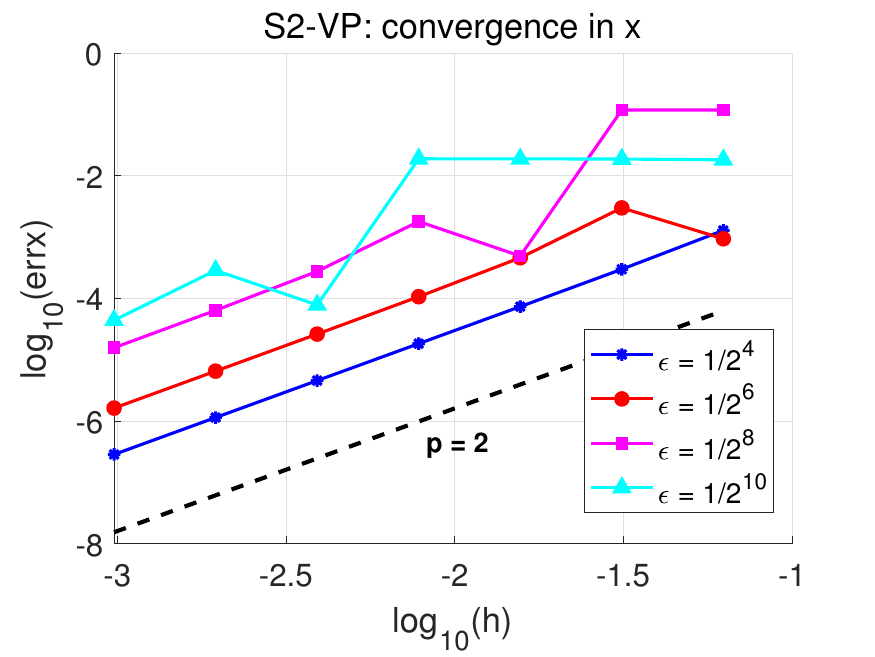}
	\end{subfigure}
	\begin{subfigure}{0.328\textwidth}
		\centering
		\includegraphics[width=\linewidth]{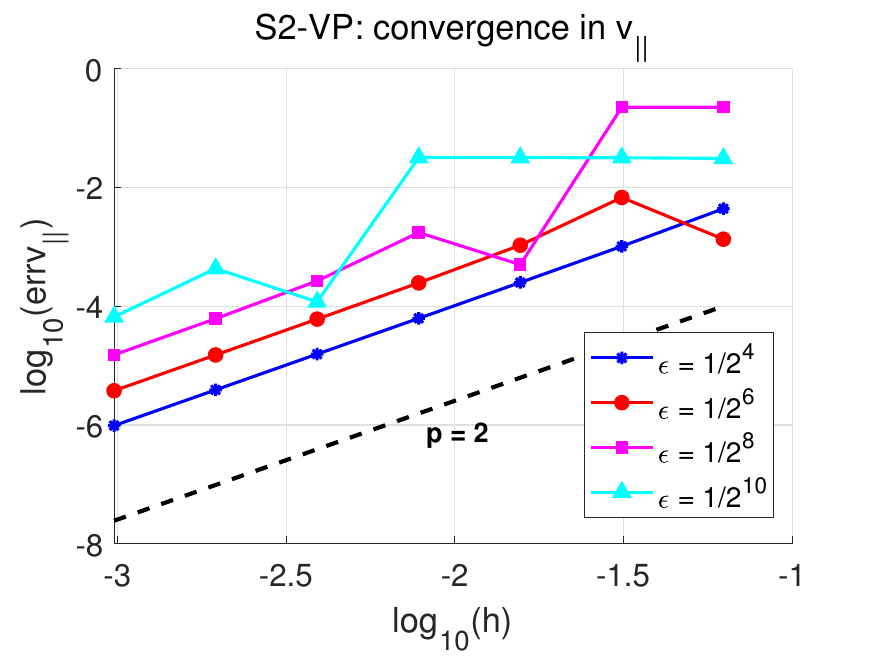}  
	\end{subfigure}
	\begin{subfigure}{0.328\textwidth}
		\centering
		\includegraphics[width=\linewidth]{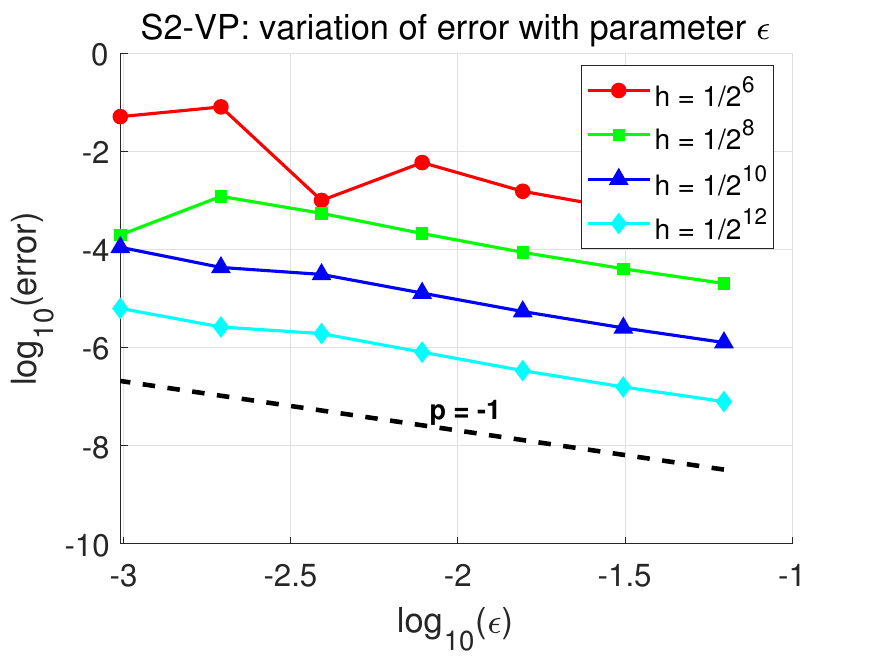}
	\end{subfigure}
	\caption{Problem \ref{pro3}. The error \eqref{2.15} (left and middle) of S2-VP with $h = 1/2^{k}$ for $k=4,...,10$ under different $\varepsilon$ in \eqref{1.1} with $q=1.5$. The error \eqref{2.16} (right) of S2-VP with $\varepsilon = 1/2^{k}$ for $k=4,...,10$ under different $h$ in \eqref{1.1} with $q=1.5$.}
	\label{VP_C1.5}
\end{figure}

\begin{problem}\label{pro4} (\textbf{Strong magnetic field with $q=1$.})
	Finally, for \eqref{1.1} with $q=1$, this problem adopts the following strong magnetic field
	\begin{equation*}
		\frac{1}{\varepsilon}B=\frac{1}{\varepsilon}
		\begin{pmatrix}
			1 - \sin(\varepsilon x_{2})/2 \\ 
			1 + \cos(\varepsilon x_{3})/2 \\
			1 + \cos(\varepsilon x_{1})/2
		\end{pmatrix},
	\end{equation*}
	and the electric field \(E(x)=-\nabla U(x)\) with the potential \(U(x)=-\sin(\frac{x_{1}}{2}) \sin(x_{2}) \sin(x_{3}) \). The initial values are \(x(0)=(1/6,1/8,1/4)^{\intercal}\) and \(v(0)=(1/5,1/3,1/2)^{\intercal}\).
\end{problem}
The relative errors \eqref{2.15}-\eqref{2.16} of S2-new \eqref{2.5} and S2-VP \eqref{2.19} are shown in Figs.~\ref{S2_C1}-\ref{VP_C1}. As shown in these results, when $q = 1$, the proposed scheme still demonstrates superior performance in terms of error results.
\begin{figure}[t!] 
	\centering
	\begin{subfigure}{0.328\textwidth}
		\centering
		\includegraphics[width=\linewidth]{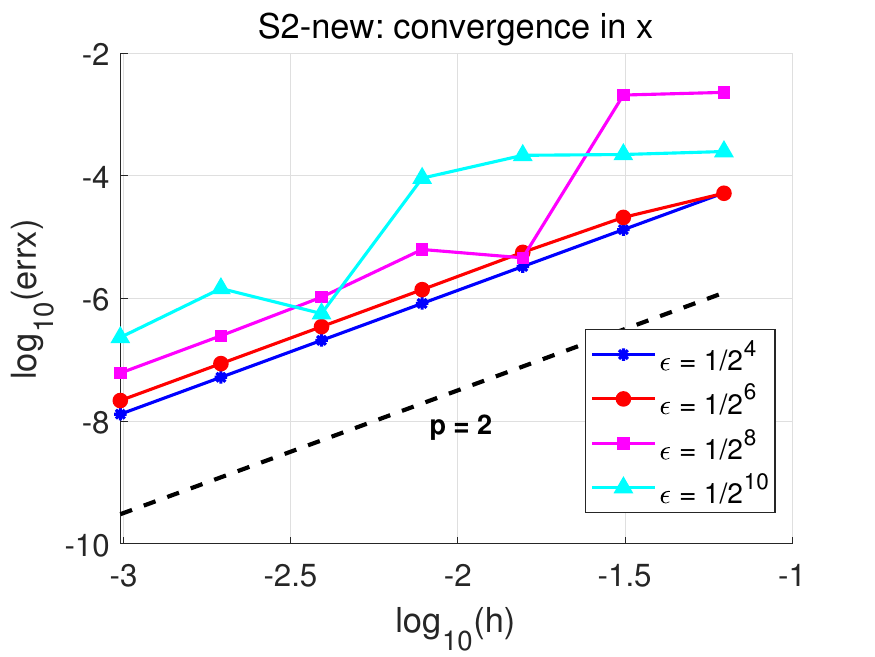}
	\end{subfigure}
	\begin{subfigure}{0.328\textwidth}
		\centering
		\includegraphics[width=\linewidth]{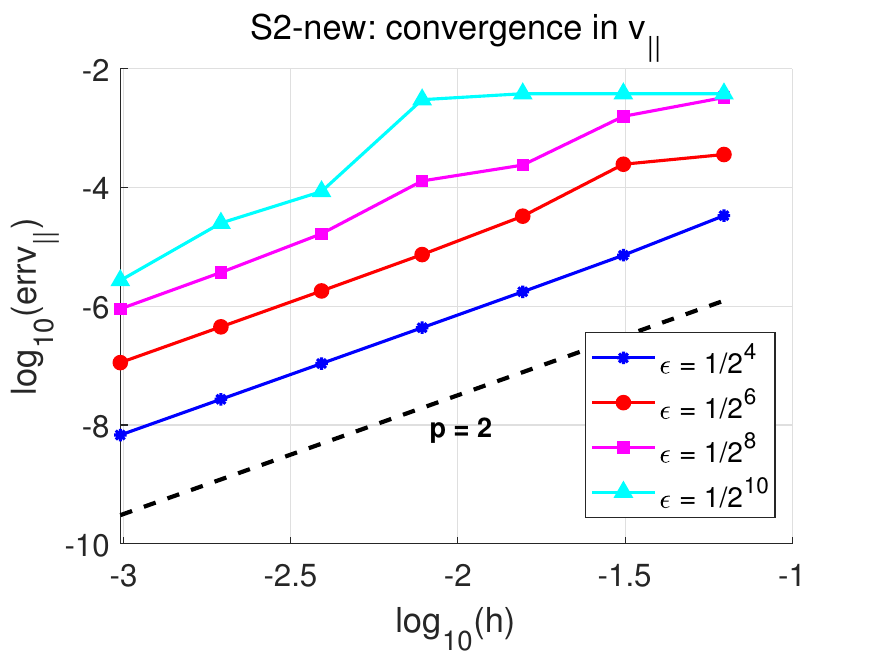}  
	\end{subfigure}
	\begin{subfigure}{0.328\textwidth}
		\centering
		\includegraphics[width=\linewidth]{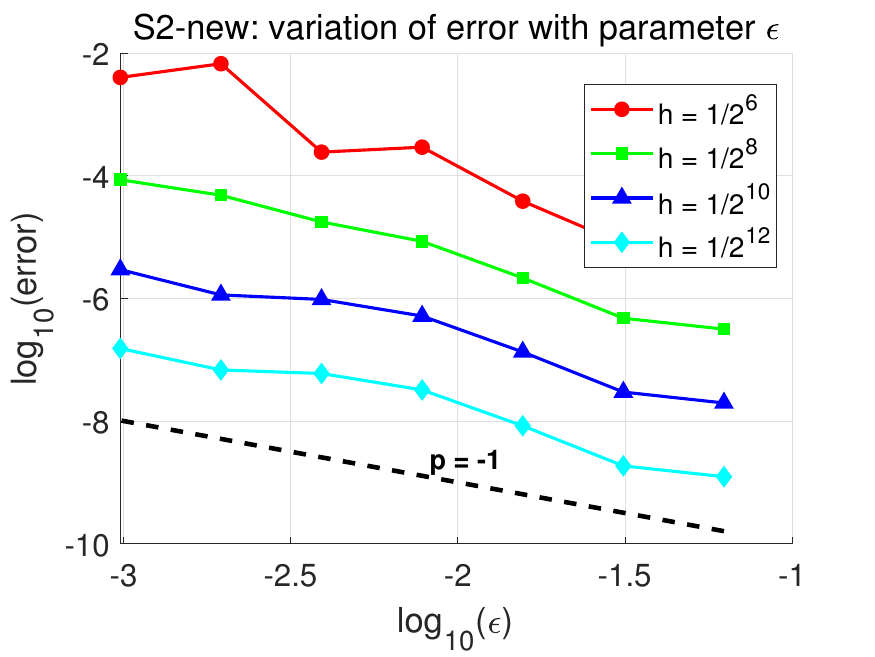}
	\end{subfigure}
	\caption{Problem \ref{pro4}. The error \eqref{2.15} (left and middle) of S2-new with $h = 1/2^{k}$ for $k=4,...,10$ under different $\varepsilon$ in \eqref{1.1} with $q=1$. The error \eqref{2.16} (right) of S2-new with $\varepsilon = 1/2^{k}$ for $k=4,...,10$ under different $h$ in \eqref{1.1} with $q=1$.}
	\label{S2_C1}
\end{figure}
\begin{figure}[t!] 
	\centering
	\begin{subfigure}{0.328\textwidth}
		\centering
		\includegraphics[width=\linewidth]{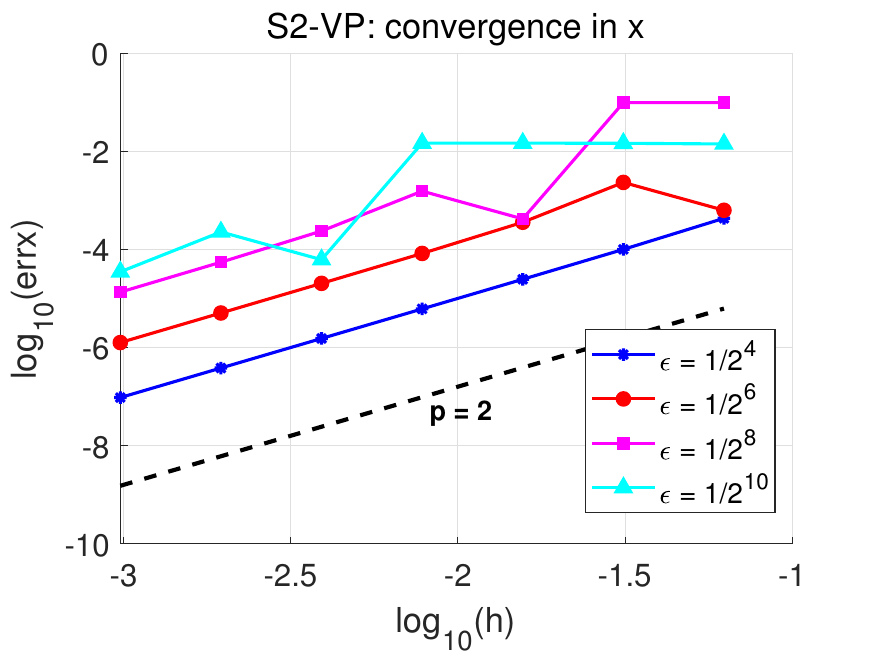}
	\end{subfigure}
	\begin{subfigure}{0.328\textwidth}
		\centering
		\includegraphics[width=\linewidth]{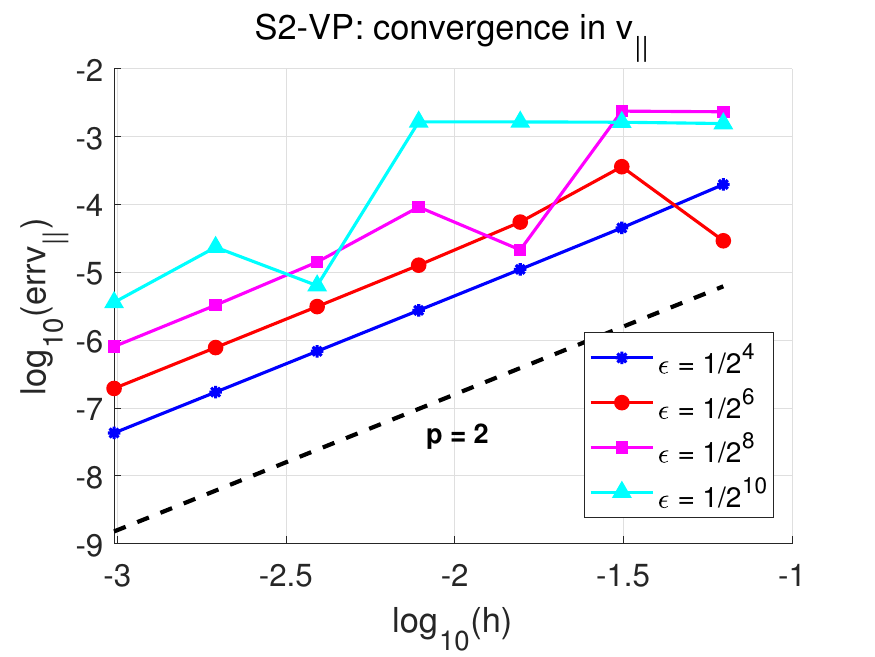}  
	\end{subfigure}
	\begin{subfigure}{0.328\textwidth}
		\centering
		\includegraphics[width=\linewidth]{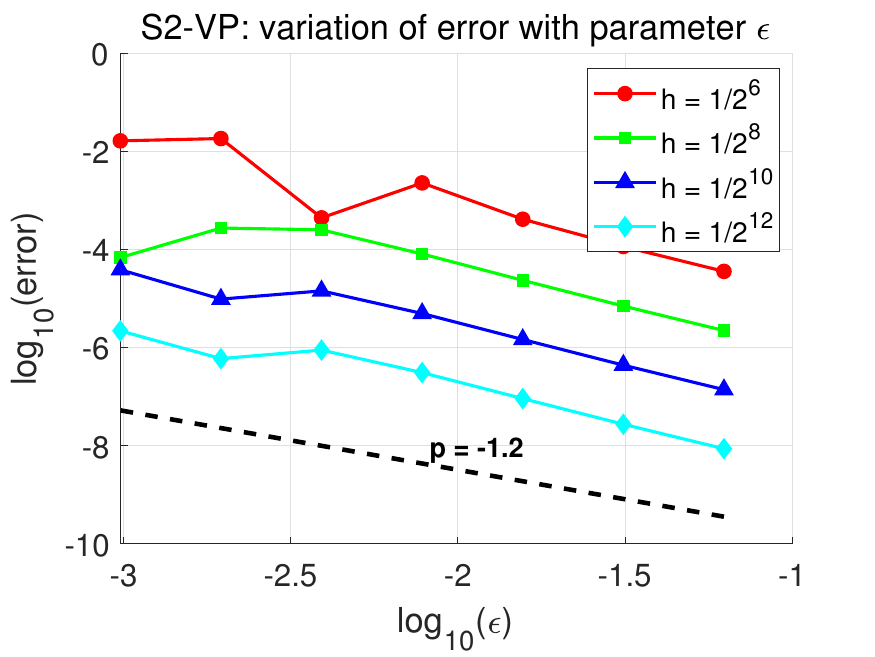}
	\end{subfigure}
	\caption{Problem \ref{pro4}. The error \eqref{2.15} (left and middle) of S2-VP with $h = 1/2^{k}$ for $k=4,...,10$ under different $\varepsilon$ in \eqref{1.1} with $q=1$. The error \eqref{2.16} (right) of S2-VP with $\varepsilon = 1/2^{k}$ for $k=4,...,10$ under different $h$ in \eqref{1.1} with $q=1$.}
	\label{VP_C1}
\end{figure}

\section{Error estimates (proof of Theorem \ref{th2.1})}\label{sec3}

This section provides a rigorous proof of Theorem \ref{th2.1}, with the proof framework outlined as follows:

\begin{itemize}
	\item Section \ref{subsec3.1} reformulates the original system as a long-time problem via a time rescaling technique.
	\item Section \ref{subsec3.2} introduces a truncated system for the long-time problem and establishes the associated approximation error.
	\item The local truncation error results for the S2-new are presented in Section \ref{subsec3.3}.
	\item Section \ref{subsec3.4} provides a preliminary error estimate of the numerical solution. 
	\item The improved error bounds for the numerical solution are obtained in Section \ref{subsec3.5}.
\end{itemize}

\subsection{Time rescaling}\label{subsec3.1}

We first focus our analysis on the CPD \eqref{1.1}
\begin{equation}
	\dot{x} = v, \quad \dot{v} = v \times \frac{B(\varepsilon^q x)}{\varepsilon} + E(x), \quad 1 \leq q \leq 2, \quad 0 < t \leq T,
	\label{3.1}
\end{equation}
where the magnetic field \( B(\varepsilon^q x) \) satisfies the condition \( |B(0)| > 0 \) independent of \(\varepsilon\).

To establish error bounds with optimal dependence on \(\varepsilon\) for the proposed method on the time interval \([0,T]\) (where \(T\) is independent of \(\varepsilon\)), we introduce the following time rescaling
\begin{equation*}
	\tau := t/\varepsilon, \quad z(\tau) := x(t), \quad w(\tau) := v(t), \quad 0 \leq \tau \leq \frac{T}{\varepsilon}.
\end{equation*}
This transformation reformulates \eqref{3.1} into a long-time problem
\begin{equation}
	\begin{cases}
		\dot{z}(\tau) = \varepsilon w(\tau),  \\
		\dot{w}(\tau) = w(\tau) \times B(\varepsilon^q z(\tau)) + \varepsilon E(z(\tau)), \quad 1 \leq q \leq 2, 
		\displaystyle \quad 0 < \tau \leq \frac{T}{\varepsilon}, \\
		z(0) = x_0, \quad w(0) = v_0.
	\end{cases}
	\label{3.3}
\end{equation}
This formulation over long-time scales facilitates the resolution of scale disparities between \(\varepsilon\) and the time step, while also revealing the averaging effect to be utilized in later analysis. Under the assumption that \(E(\cdot)\) and \(B(\cdot)\) belong to \(C^2(\mathbb{R}^3)\), it is evident that for \eqref{3.3} we obtain 
\begin{equation*}
	\|z\|_{L^{\infty}(0,T/\varepsilon)} + \|w\|_{L^{\infty}(0,T/\varepsilon)} \lesssim 1.
\end{equation*}
For the long-time problem \eqref{3.3}, define the time step size under the new scaling as \(\mathfrak{h} > 0\), and \(\tau_n = n\mathfrak{h}\) for \(n\in\mathbb{N}\). Given \(z^0 = x_0\) and \(w^0 = v_0\), and denoting the numerical solutions as \(z^n \approx z(\tau_n)\) and \(w^n \approx w(\tau_n)\), the splitting scheme S2-new for solving the long-time problem \eqref{3.3} is given by 
\begin{align}
	\begin{split}
		\displaystyle z^{n+1}=& z^{n}+\frac{\varepsilon\mathfrak{h}}{2}\varphi_{1}\biggl(\frac{\mathfrak{h}\widehat{B}_{0}}{2}\biggr)\Bigl(I+\mathrm{e}^{\mathfrak{h}(\widehat{B}_{\bar{z}_{n}}-\widehat{B}_{0})}\mathrm{e}^{\frac{\mathfrak{h}\widehat{B}_{0}}{2}}\Bigr)w^{n} \\
		\displaystyle &+\frac{\varepsilon^{2}\mathfrak{h}^{2}}{2}\varphi_{1}\biggl(\frac{\mathfrak{h}\widehat{B}_{0}}{2}\biggr)
		\varphi_{1}\biggl(\mathfrak{h}(\widehat{B}_{\bar{z}_{n}}-\widehat{B}_{0})\biggr)
		E(\bar{z}_{n}), \quad 0 \leq n < \frac{T}{\varepsilon\mathfrak{h}}, \\
		\displaystyle w^{n+1}=& \mathrm{e}^{\frac{\mathfrak{h}\widehat{B}_{0}}{2}}
		\mathrm{e}^{\mathfrak{h}(\widehat{B}_{\bar{z}_{n}}-\widehat{B}_{0})}
		\mathrm{e}^{\frac{\mathfrak{h}\widehat{B}_{0}}{2}}w^{n} +\varepsilon\mathfrak{h}\mathrm{e}^{\frac{\mathfrak{h}\widehat{B}_{0}}{2}}
		\varphi_{1}\biggl(\mathfrak{h}(\widehat{B}_{\bar{z}_{n}}-\widehat{B}_{0})\biggr)
		E(\bar{z}_{n}),
	\end{split}
	\label{3.5}
\end{align}
with the updated notations 
\begin{equation}
	\bar{z}_{n}=z^{n}+\frac{1}{2}\varepsilon\mathfrak{h}\varphi_{1}\biggl(\frac{\mathfrak{h}\widehat{B}_{0}}{2}\biggr)w^{n},
	\quad \widehat{B}_{0}=\widehat{B}(\varepsilon^q x_{0}),
	\quad \widehat{B}_{\bar{z}_{n}}=\widehat{B}(\varepsilon^q \bar{z}_{n}),
	\quad 1 \leq q \leq 2.
	\label{C1}
\end{equation}

\subsection{Linearization of the problem.}\label{subsec3.2}

To prove \eqref{2.7} in Theorem \ref{th2.1}, we initiate the proof by linearizing the problem. 
We consider a truncated system of \eqref{3.3} at time \(\tau = \tau_n + s\) as
\begin{equation}
	\begin{cases}
		\dot{\widetilde{z}^n}(s) = \varepsilon \widetilde{w}^n(s), \quad 0 < s \leq \mathfrak{h}, \\
		\displaystyle \dot{\widetilde{w}^n}(s) = \widetilde{w}^n(s) \times B(\varepsilon^q z(\tau_n + \frac{\mathfrak{h}}{2})) + \varepsilon E(\widetilde{z}^n(s)), \quad 1 \leq q \leq 2, 
		\displaystyle \quad 0 \leq n < \frac{T}{\varepsilon \mathfrak{h}}, \\
		\widetilde{z}^n(0) = z(\tau_n),  \quad  \widetilde{w}^n(0) = w(\tau_n).
	\end{cases}
	\label{3.7}
\end{equation}
Moreover, there exists a uniform constant \(C > 0\) that depends on \(\|z\|_{L^{\infty}(0,T/\varepsilon)}\), \(\|w\|_{L^{\infty}(0,T/\varepsilon)}\) and the norms of \(B\) and \(E\), such that
\begin{equation*}
	\|\widetilde{z}^n\|_{L^{\infty}(0,\mathfrak{h})} + \|\widetilde{w}^n\|_{L^{\infty}(0,\mathfrak{h})} \leq C, 
	\displaystyle \quad 
	0 \leq n < \frac{T}{\varepsilon \mathfrak{h}}.
\end{equation*}

The errors between the long-time problem \eqref{3.3} and the truncated system \eqref{3.7} are defined as follows
\begin{equation}
	\zeta^n_{z}(s) := z(\tau_n + s) - \widetilde{z}^n(s), \quad 
	\zeta^n_{w}(s) := w(\tau_n + s) - \widetilde{w}^n(s), \quad 
	0 \leq n < \frac{T}{\varepsilon \mathfrak{h}}.
	\label{A}
\end{equation}
And taking the difference between \eqref{3.3} and \eqref{3.7}, we get 
\begin{equation}
	\begin{cases}
		\dot{\zeta}^n_{z}(s) = \varepsilon \zeta^n_{w}(s), \quad 0 < s \leq \mathfrak{h},  
		\displaystyle \quad 0 \leq n < \frac{T}{\varepsilon \mathfrak{h}}, \\
		\displaystyle \dot{\zeta}^n_{w}(s) = \zeta^n_{w}(s) \times B(\varepsilon^q z(\tau_n + \frac{\mathfrak{h}}{2})) + \varepsilon E(z(\tau_n + s)) - \varepsilon E(\widetilde{z}^n(s)) + \xi_{0}^{n}(s), \\
		\displaystyle \zeta^n_{z}(0) = \zeta^n_{w}(0) = 0,
		\label{3.8}
	\end{cases}
\end{equation}
where
\begin{equation*}
	\xi_{0}^{n}(s) = w(\tau_n + s) \times \left[ B(\varepsilon^q z(\tau_n + s)) - B(\varepsilon^q z(\tau_n + \frac{\mathfrak{h}}{2})) \right],
	\quad 1 \leq q \leq 2.
\end{equation*}

Using the Duhamel's formula for \eqref{3.8}, we obtain
\begin{subequations}
	\begin{align}
		\zeta_{z}^{n}(\mathfrak{h}) &= \varepsilon \int_{0}^{\mathfrak{h}} \zeta_{w}^{n}(s) \, ds, \quad 0 \leq n < \frac{T}{\varepsilon \mathfrak{h}}, \label{3.9a} \\
		\zeta_{w}^{n}(\mathfrak{h}) &= \int_{0}^{\mathfrak{h}} \mathrm{e}^{(\mathfrak{h}-s)\widehat{B}(\varepsilon^q z(\tau_{n}+\frac{\mathfrak{h}}{2}))} 
		\bigl[\varepsilon E(z(\tau_{n}+s)) - \varepsilon E(\widetilde{z}^{n}(s)) + \xi_{0}^{n}(s)\bigr] \, ds \notag \\
		& = \int_{0}^{\mathfrak{h}} \mathrm{e}^{(\mathfrak{h}-s)\widehat{B}(\varepsilon^q z(\tau_{n}+\frac{\mathfrak{h}}{2}))} 
		\left[ \varepsilon \int_{0}^{1} \nabla E\bigl(z(\tau_{n}+s) + (\rho-1)\zeta_{z}^{n}(s)\bigr)\zeta_{z}^{n}(s) \, d\rho + \xi_{0}^{n}(s) \right] ds. 
		\label{3.9b}
	\end{align}
\end{subequations}
Substituting \eqref{3.9b} into \eqref{3.9a}, \(\zeta_{z}^{n}(\mathfrak{h})\) can be rewritten as
\begin{align*}
	\zeta_{z}^{n}(\mathfrak{h}) = & \varepsilon^{2} \int_{0}^{\mathfrak{h}} \int_{0}^{s} \mathrm{e}^{(s-\sigma)\widehat{B}(\varepsilon^q z(\tau_{n}+\frac{\mathfrak{h}}{2}))} 
	\int_{0}^{1} \nabla E\bigl(z(\tau_{n}+\sigma) + (\rho-1)\zeta_{z}^{n}(\sigma)\bigr)\zeta_{z}^{n}(\sigma) \, d\rho \, d\sigma \, ds  \\
	& + \varepsilon \int_{0}^{\mathfrak{h}} \int_{0}^{s} \mathrm{e}^{(s-\sigma)\widehat{B}(\varepsilon^q z(\tau_{n}+\frac{\mathfrak{h}}{2}))} \xi_{0}^{n}(\sigma) \, d\sigma \, ds.	
\end{align*}
Employing the fundamental principle of calculus and introducing the abbreviation \(\widehat{B}_n = \widehat{B}(\varepsilon^q z(\tau_n + \frac{\mathfrak{h}}{2}))\), it is straightforward to derive that 
\begin{align*}
	&\left| \int_0^{\mathfrak{h}} \mathrm{e}^{(\mathfrak{h}-s)\widehat{B}_n} \xi^n_0(s) \, ds \right| \notag \\
	&\,\,= \left| \int_0^{\mathfrak{h}} \mathrm{e}^{(\mathfrak{h}-s)\widehat{B}_n} w(\tau_n + s) \times 
	\left[ B(\varepsilon^q z(\tau_n + s)) - B(\varepsilon^q z(\tau_n + \frac{\mathfrak{h}}{2})) \right] ds \right| \notag \\
	&\,\,= \left| \int_0^{\mathfrak{h}} \mathrm{e}^{(\mathfrak{h}-s)\widehat{B}_n} w(\tau_n + s) \times 
	\left[ \varepsilon^{q+1} \int_{\mathfrak{h}/2}^s \nabla B(\varepsilon^q z(\tau_n + \rho)) w(\tau_n + \rho) \, d\rho \right] ds \right| \notag \\
	&\,\, = \varepsilon^{q+1} \left| \int_0^{\mathfrak{h}} \mathrm{e}^{(\mathfrak{h}-s)\widehat{B}_n} F_n(s) \, ds \right|,
\end{align*}
where
\begin{equation*}
	F_n(s) = w(\tau_n + s) \times \int_{\mathfrak{h}/2}^s \nabla B(\varepsilon^q z(\tau_n + \rho)) w(\tau_n + \rho) \, d\rho.
\end{equation*}
Utilizing the midpoint integration rule and observing that \( F_n(\mathfrak{h}/2) = 0 \), we have
\begin{align*}
	& \left| \int_0^{\mathfrak{h}} \mathrm{e}^{(\mathfrak{h}-s)\widehat{B}(\varepsilon^q z(\tau_n + \frac{\mathfrak{h}}{2}))} \xi^n_0(s) \, ds \right| 
	= \varepsilon^{q+1} \left| \int_0^{\mathfrak{h}} \mathrm{e}^{(\mathfrak{h}-s)\widehat{B}_n} F_n(s) \, ds \right| \notag \\
	& \,\,= \varepsilon^{q+1} \left|  (\mathfrak{h}-0) \mathrm{e}^{(\mathfrak{h}-\frac{\mathfrak{h}}{2})\widehat{B}_n} F_n(\frac{\mathfrak{h}}{2}) + \frac{\mathfrak{h}^3}{24} \frac{d^2}{ds^2}(\mathrm{e}^{(\mathfrak{h}-s)\widehat{B}_n} F_n(s))|_{s=\xi} \right| \notag \\
	& \,\, \lesssim \varepsilon^{q+1} \mathfrak{h}^3,
\end{align*}
with $\xi \in (0,\mathfrak{h})$. Thus, it is derived
\begin{equation}
	|\zeta^n_w(\mathfrak{h})| \lesssim \varepsilon^{q+1} \mathfrak{h}^3,
	\quad 1 \leq q \leq 2, 
	\quad 0 \leq n < \frac{T}{\varepsilon \mathfrak{h}}.  
	\label{3.10}
\end{equation}
Substituting the above estimate into \eqref{3.9a} yields
\begin{equation}
	|\zeta^n_z(\mathfrak{h})| \lesssim \varepsilon^{q+2} \mathfrak{h}^4,
	\quad 1 \leq q \leq 2, 
	\quad 0 \leq n < \frac{T}{\varepsilon \mathfrak{h}}. 
	\label{3.11}
\end{equation}

In order to estimate the errors of S2-new \eqref{3.5}   
\begin{equation*}
	e_{z}^{n+1} := z(\tau_{n+1}) - z^{n+1}, \quad 
	e_{w}^{n+1} := w(\tau_{n+1}) - w^{n+1}, \quad 
	0 \leq n < \frac{T}{\varepsilon \mathfrak{h}},
\end{equation*} 
we reformulate the errors by using \eqref{A} 
\begin{align}
	e_{z}^{n+1} & = \zeta_{z}^{n}(\mathfrak{h}) + \widetilde{z}^{n}(\mathfrak{h}) - z^{n+1} \notag \\
	& = \zeta_{z}^{n}(\mathfrak{h}) + \widetilde{e}_{z}^{n}, 
	\quad 0 \leq n < \frac{T}{\varepsilon \mathfrak{h}},  \label{B1}\\
	e_{w}^{n+1} & = \zeta_{w}^{n}(\mathfrak{h}) + \widetilde{w}^{n}(\mathfrak{h}) - w^{n+1} \notag \\
	& = \zeta_{w}^{n}(\mathfrak{h}) + \widetilde{e}_{w}^{n}, 
	\quad 0 \leq n < \frac{T}{\varepsilon \mathfrak{h}}, \label{B2}
\end{align}  
and then proceed to estimate the terms defined in \eqref{B1}-\eqref{B2}
\begin{equation*}
	\widetilde{e}_{z}^{n} := \widetilde{z}^{n}(\mathfrak{h}) - z^{n+1}, \quad 
	\widetilde{e}_{w}^{n} := \widetilde{w}^{n}(\mathfrak{h}) - w^{n+1}, \quad 
	0 \leq n < \frac{T}{\varepsilon \mathfrak{h}}.
\end{equation*}

\subsection{Local error estimates.}\label{subsec3.3}
To derive the error bounds for the numerical solution, we present below the definition and proof of the local truncation errors for S2-new \eqref{3.5}.
\begin{lemma}\label{lemmalocal}
	Assume that \( E(\cdot), B(\cdot) \in C^2(\mathbb{R}^3) \) and the numerical solution at step $n-1$ is assumed to be exact, let \(\xi_{z}^{n}\) and \(\xi_{w}^{n}\) are the local truncation errors 	obtained from the S2-new for solving the truncated system \eqref{3.7} at step $n$. Then there exists a positive constant \( \mathfrak{h}_0 \) independent of \(\varepsilon\), such that when the time step \( 0 < \mathfrak{h} < \mathfrak{h}_0 \), we can obtain
	\begin{equation}
		|\xi_{z}^{n}| \lesssim \varepsilon^{3} \mathfrak{h}^{3}, \quad 
		|\xi_{w}^{n}| \lesssim \varepsilon^{2} \mathfrak{h}^{3}, 
		\quad 0 \leq n < \frac{T}{\varepsilon \mathfrak{h}}.
		\label{local}
	\end{equation}
\end{lemma}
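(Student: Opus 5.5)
We need to bound the local truncation error $\xi^n = (\widetilde z^n(\mathfrak h) - z^{n+1},\ \widetilde w^n(\mathfrak h) - w^{n+1})$, obtained when the scheme \eqref{3.5} is started from the exact data $(z(\tau_n),w(\tau_n))$.

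The plan is to write both the exact solution of the truncated system \eqref{3.7} and one step of \eqref{3.5} in variation-of-constants form, then compare them term by term. For the truncated system, write $\widehat B_n=\widehat B(\varepsilon^q z(\tau_n+\frac{\mathfrak h}{2}))$. Duhamel's formula gives
\begin{equation*}
\widetilde w^n(\mathfrak h)=\mathrm e^{\mathfrak h\widehat B_n}w(\tau_n)+\varepsilon\int_0^{\mathfrak h}\mathrm e^{(\mathfrak h-s)\widehat B_n}E(\widetilde z^n(s))\,ds,
\qquad
\widetilde z^n(\mathfrak h)=z(\tau_n)+\varepsilon\int_0^{\mathfrak h}\widetilde w^n(s)\,ds .
\end{equation*}

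Next we expand the integral terms. Since $\widetilde z^n(s)=z(\tau_n)+\mathcal O(\varepsilon s)$, we have $E(\widetilde z^n(s))=E(z(\tau_n))+\mathcal O(\varepsilon s)$. The $E$-part of $\widetilde w^n(\mathfrak h)$ is therefore $\varepsilon\mathfrak h\varphi_1(\mathfrak h\widehat B_n)E(z(\tau_n))+\mathcal O(\varepsilon^2\mathfrak h^2)$. In the same way,
\begin{equation*}
\widetilde z^n(\mathfrak h)=z(\tau_n)+\varepsilon\mathfrak h\varphi_1(\mathfrak h\widehat B_n)w(\tau_n)+\varepsilon^2\!\int_0^{\mathfrak h}\!\!\int_0^s \mathrm e^{(s-\sigma)\widehat B_n}E\,d\sigma ds+\mathcal O(\varepsilon^3\mathfrak h^3).
\end{equation*}

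For the numerical step, note that $\bar z_n=z(\tau_n)+\mathcal O(\varepsilon\mathfrak h)$. This is a point of the trajectory near $\tau_n+\mathfrak h/2$: a Taylor argument gives $z(\tau_n+\frac{\mathfrak h}{2})-\bar z_n=\mathcal O(\varepsilon\mathfrak h\cdot\mathfrak h)$, and in fact the first-order parts $\varepsilon\frac{\mathfrak h}{2}\varphi_1(\frac{\mathfrak h}{2}\widehat B_0)w$ agree up to $\mathcal O(\varepsilon^2\mathfrak h^2)$ plus the mismatch between $\widehat B_0$ and $\widehat B(\varepsilon^q z(\tau))$. Because $B$ is evaluated at $\varepsilon^q(\cdot)$, this gives
\begin{equation*}
\widehat B_{\bar z_n}-\widehat B_n=\mathcal O(\varepsilon^{q}\cdot\varepsilon^2\mathfrak h^2)=\mathcal O(\varepsilon^{q+2}\mathfrak h^2).
\end{equation*}

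The key algebraic step is to rewrite the velocity propagator $\mathrm e^{\frac{\mathfrak h}{2}\widehat B_0}\mathrm e^{\mathfrak h(\widehat B_{\bar z_n}-\widehat B_0)}\mathrm e^{\frac{\mathfrak h}{2}\widehat B_0}$ as $\mathrm e^{\mathfrak h\widehat B_{\bar z_n}}$ plus a Strang splitting commutator remainder. Since $\widehat B_{\bar z_n}-\widehat B_0=\mathcal O(\varepsilon^q|\bar z_n-x_0|)$ and $|\bar z_n - x_0|\lesssim \varepsilon\tau_n\lesssim 1$, this difference is only $\mathcal O(\varepsilon^q)$. The symmetric Strang composition has a remainder of order $\mathfrak h^3$ times double commutators, each involving $[\widehat B_0,\widehat B_{\bar z_n}-\widehat B_0]$, so the remainder is $\mathcal O(\varepsilon^q\mathfrak h^3)\lesssim\varepsilon\mathfrak h^3$ because $q\ge1$. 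The remainder is really needed at $\mathcal O(\varepsilon^2\mathfrak h^3)$, and I expect this to be the main obstacle: the bound $\varepsilon^q\mathfrak h^3$ is only sufficient when $q\ge2$.

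To handle it I would first try the following observation. The same splitting defect also appears in the exact truncated flow when it is compared with the frozen field $\widehat B_n$, and the magnetic rotation is isometric. It may therefore be more natural to compare against \eqref{3.7} with its frozen $\widehat B_n$ in exactly the same Strang form. That is, I would treat \eqref{3.7} itself as having been split with respect to $\widehat B_0$, with its own Strang defect. The mismatches then reduce to $\widehat B_{\bar z_n}-\widehat B_n=\mathcal O(\varepsilon^{q+2}\mathfrak h^2)$ multiplied by $\mathfrak h$, giving $\mathcal O(\varepsilon^{q+2}\mathfrak h^3)$, which is admissible. The remaining $E$-terms are compared using the midpoint-type structure of $\bar z_n$. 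A first-order Taylor expansion there leaves $\mathcal O(\varepsilon\cdot\varepsilon\mathfrak h\cdot\mathfrak h^2)=\mathcal O(\varepsilon^2\mathfrak h^3)$ after exploiting symmetry; if symmetry cannot be used, I would accept $\varepsilon^2\mathfrak h^2$ and recheck.

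The position error then follows by the same comparison with one extra factor $\varepsilon$, because every $z$-increment carries $\varepsilon$: this gives $|\xi_z^n|\lesssim\varepsilon^3\mathfrak h^3$. All $\varphi_1$ and exponential factors are bounded uniformly, since $\widehat B$ is skew-symmetric and $\mathfrak h<\mathfrak h_0$.
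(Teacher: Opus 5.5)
There is a genuine gap, and it cannot be closed, because the bounds \eqref{local} are false in general.

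\textbf{The commutator remainder.} Your device for this term does not work. $\xi_w^n$ is measured against the exact flow of \eqref{3.7}, whose homogeneous part is the single exponential $\mathrm{e}^{\mathfrak h\widehat B_n}$, and that flow has no splitting defect of its own. Comparing the scheme with $\mathrm{e}^{\frac{\mathfrak h}{2}\widehat B_0}\mathrm{e}^{\mathfrak h(\widehat B_n-\widehat B_0)}\mathrm{e}^{\frac{\mathfrak h}{2}\widehat B_0}$ only relocates the term:
\begin{equation*}
\mathrm{e}^{\mathfrak h\widehat B_n}-\mathrm{e}^{\frac{\mathfrak h}{2}\widehat B_0}\mathrm{e}^{\mathfrak h(\widehat B_n-\widehat B_0)}\mathrm{e}^{\frac{\mathfrak h}{2}\widehat B_0}=\frac{\mathfrak h^3}{24}\bigl[\widehat B_0,[\widehat B_0,\widehat B_n]\bigr]+\mathcal O\bigl(\mathfrak h^3|\widehat B_n-\widehat B_0|^2+\mathfrak h^4|\widehat B_n-\widehat B_0|\bigr).
\end{equation*}
This remainder stays in $\xi_w^n$. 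It is genuinely of size $\varepsilon^q\mathfrak h^3$ once $z(\tau_n)$ is an $\mathcal O(1)$ distance from $x_0$ and the direction of $B$ there differs from that of $B(\varepsilon^q x_0)$. Your diagnosis was right; the proposed cure is not.

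\textbf{The $E$-terms.} Independently of any variation of $B$, you only track the variation of $E$ along the trajectory. You never compare the weights $\varphi_1(\mathfrak h\widehat B_n)$ (exact) and $\mathrm{e}^{\frac{\mathfrak h}{2}\widehat B_0}\varphi_1(\mathfrak h(\widehat B_{\bar z(\tau_n)}-\widehat B_0))$ (scheme). These agree only up to $\mathcal O(\mathfrak h^2)$, which is a midpoint-rule error for an integrand rotating with $\mathcal O(1)$ frequency. Take $B\equiv(0,0,1)^\intercal$ and $E\equiv(1,0,0)^\intercal$, so $\widehat B_n=\widehat B_0=\widehat B_{\bar z(\tau_n)}=:\widehat B$. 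All terms containing $w(\tau_n)$ are then reproduced exactly, since $\frac{\mathfrak h}{2}\varphi_1(\frac{\mathfrak h}{2}\widehat B)(I+\mathrm{e}^{\frac{\mathfrak h}{2}\widehat B})=\mathfrak h\varphi_1(\mathfrak h\widehat B)$. A direct computation gives
\begin{equation*}
\xi_w^n=\varepsilon\mathfrak h\Bigl(\frac{\sin(\mathfrak h/2)}{\mathfrak h/2}-1\Bigr)\mathrm{e}^{\frac{\mathfrak h}{2}\widehat B}E,\qquad \xi_z^n=\frac{\varepsilon^2\mathfrak h^3}{24}\,\widehat BE+\mathcal O(\varepsilon^2\mathfrak h^4).
\end{equation*}
Hence $|\xi_w^n|\approx\varepsilon\mathfrak h^3/24$ and $|\xi_z^n|\approx\varepsilon^2\mathfrak h^3/24$, and your ``one extra factor $\varepsilon$'' step for $\xi_z^n$ fails.

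\textbf{The paper's proof.} It reaches \eqref{local} through steps that do not hold either.
\begin{itemize}
\item In $\xi_{w,1}^n$ the ``$+\cdots$'' silently drops the double commutator above. That term, like $\xi_{z,1}^n$ in \eqref{3.24}, would be small only under \eqref{Bn}.
\item \eqref{Bn} fails for $\tau_n\gg1$. $|z(\tau_n)-x_0|$ is generically of size $\varepsilon\tau_n$ (e.g.\ through the parallel motion), hence $\mathcal O(1)$ for $\tau_n\sim T/\varepsilon$. So $D:=\widehat B_{\bar z(\tau_n)}-\widehat B_0$ is only $\mathcal O(\varepsilon^q)$, and $\xi_{z,1}^n$ contains $\varepsilon\mathfrak h^3\bigl(\frac1{24}\widehat B_0D-\frac1{12}D\widehat B_0-\frac1{12}D^2\bigr)w(\tau_n)=\mathcal O(\varepsilon^{q+1}\mathfrak h^3)$.
\item In \eqref{3.19}, the midpoint-rule error of $\varepsilon\mathfrak h\int_0^1\mathrm{e}^{\rho\mathfrak h\widehat B_n}d\rho\,E(z(\tau_n+\tfrac{\mathfrak h}{2}))$ is $\varepsilon\mathfrak h\bigl(\varphi_1(\mathfrak h\widehat B_n)-\mathrm{e}^{\frac{\mathfrak h}{2}\widehat B_n}\bigr)E(z(\tau_n+\tfrac{\mathfrak h}{2}))=\frac{\varepsilon\mathfrak h^3}{24}\widehat B_n^2E(z(\tau_n+\tfrac{\mathfrak h}{2}))+\mathcal O(\varepsilon\mathfrak h^4)$. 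This is not the $\mathcal O(\varepsilon^2\mathfrak h^3)$ claimed in \eqref{3.21}, and the same weight mismatch invalidates \eqref{3.25}.
\end{itemize}

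What is provable, and sharp by the example, is $|\xi_z^n|\lesssim\varepsilon^2\mathfrak h^3$ and $|\xi_w^n|\lesssim\varepsilon\mathfrak h^3$. In the example these leading errors are perpendicular to $B$ and compensate in the position. The numerical drift per step is exactly $\varepsilon^2\mathfrak h\widehat BE$, the same as for the exact flow. So an improved global estimate has to exploit that structure rather than the sizes asserted in \eqref{local}.
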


\begin{proof}
	It can be observed that the long-time problem \eqref{3.3} and its truncated system \eqref{3.7} share a similar structure.
	According to the S2-new \eqref{3.5} and the definition of local truncation errors \(\xi_{z}^{n}\) and \(\xi_{w}^{n}\), we have
	\begin{subequations}\label{3.13}
		\begin{align}
			\widetilde{z}^{n}(\mathfrak{h}) = & z(\tau_{n})+\frac{\varepsilon\mathfrak{h}}{2}\varphi_{1}\biggl(\frac{\mathfrak{h}\widehat{B}_{0}}{2}\biggr)\Bigl(I+\mathrm{e}^{\mathfrak{h}(\widehat{B}_{\bar{z}(\tau_{n})}-\widehat{B}_{0})}\mathrm{e}^{\frac{\mathfrak{h}\widehat{B}_{0}}{2}}\Bigr)w(\tau_{n})   \notag  \\
			&+\frac{\varepsilon^{2}\mathfrak{h}^{2}}{2}\varphi_{1}\biggl(\frac{\mathfrak{h}\widehat{B}_{0}}{2}\biggr)
			\varphi_{1}\biggl(\mathfrak{h}(\widehat{B}_{\bar{z}(\tau_{n})}-\widehat{B}_{0})\biggr)
			E(\bar{z}(\tau_{n})) + \xi_{z}^{n}, 
			\quad 0 \leq n < \frac{T}{\varepsilon \mathfrak{h}}, \label{3.13a}\\
			\widetilde{w}^{n}(\mathfrak{h}) = & \mathrm{e}^{\frac{\mathfrak{h}\widehat{B}_{0}}{2}}
			\mathrm{e}^{\mathfrak{h}(\widehat{B}_{\bar{z}(\tau_{n})}-\widehat{B}_{0})}
			\mathrm{e}^{\frac{\mathfrak{h}\widehat{B}_{0}}{2}}w(\tau_{n})  \notag \\
			&+\varepsilon\mathfrak{h}\mathrm{e}^{\frac{\mathfrak{h}\widehat{B}_{0}}{2}}
			\varphi_{1}\biggl(\mathfrak{h}(\widehat{B}_{\bar{z}(\tau_{n})}-\widehat{B}_{0})\biggr) E(\bar{z}(\tau_{n})) + \xi_{w}^{n}, \label{3.13b}
		\end{align}
	\end{subequations}
	where 
	\begin{equation}
		\bar{z}(\tau_{n})=z(\tau_{n})+\frac{1}{2}\varepsilon\mathfrak{h}\varphi_{1}\biggl(\frac{\mathfrak{h}\widehat{B}_{0}}{2}\biggr)w(\tau_{n}), \quad \widehat{B}_{0}=\widehat{B}(\varepsilon^q x_{0}),
		\quad \widehat{B}_{\bar{z}(\tau_{n})}=\widehat{B}(\varepsilon^q \bar{z}(\tau_{n})),
		\quad 1 \leq q \leq 2. 
		\label{C2}
	\end{equation}
	By employing Duhamel’s formula for the truncated system \eqref{3.7}, it can be derived that
	\begin{subequations}
		\begin{align}
			\widetilde{z}^{n}(\mathfrak{h}) &= z(\tau_{n}) + \varepsilon \int_{0}^{\mathfrak{h}} \widetilde{w}^{n}(s) \, ds, 
			\quad 0 \leq n < \frac{T}{\varepsilon \mathfrak{h}}, \label{3.14a}\\
			\widetilde{w}^{n}(\mathfrak{h}) &= \mathrm{e}^{\mathfrak{h} \widehat{B}(\varepsilon^q z(\tau_{n}+\frac{\mathfrak{h}}{2}))} w(\tau_{n}) 
			+ \varepsilon \int_{0}^{\mathfrak{h}} \mathrm{e}^{(\mathfrak{h}-s)\widehat{B}(\varepsilon^q z(\tau_{n}+\frac{\mathfrak{h}}{2}))} E(\widetilde{z}^{n}(s)) \, ds. \label{3.14b}
		\end{align}
	\end{subequations}
	Substituting \eqref{3.14b} into \eqref{3.14a}, \(\widetilde{z}^{n}(\mathfrak{h})\) can be rewritten as
	\begin{equation}
		\begin{aligned}
			\widetilde{z}^{n}(\mathfrak{h}) = & z(\tau_{n}) + \varepsilon \int_{0}^{\mathfrak{h}} \mathrm{e}^{s \widehat{B}(\varepsilon^q z(\tau_{n}+\frac{\mathfrak{h}}{2}))} w(\tau_{n}) \, ds \\
			& + \varepsilon^{2} \int_{0}^{\mathfrak{h}} \int_{0}^{s} \mathrm{e}^{(s-\sigma)\widehat{B}(\varepsilon^q z(\tau_{n}+\frac{\mathfrak{h}}{2}))} E(\widetilde{z}^{n}(\sigma)) \, d\sigma \, ds.
		\end{aligned}
		\label{3.15}
	\end{equation}
	
	$\bullet$   \textbf{Estimate of $\xi_{w}^{n}$.}    Subtracting \eqref{3.14b} from \eqref{3.13b}, we can find that
	\begin{equation*}
		\xi_{w}^{n} = \xi_{w,1}^{n} + \xi_{w,2}^{n}, 
		\quad 0 \leq n < \frac{T}{\varepsilon \mathfrak{h}},
	\end{equation*}
	with
	\begin{subequations}
		\begin{align}
			\xi_{w,1}^{n} & = \mathrm{e}^{\mathfrak{h} \widehat{B}(\varepsilon^q z(\tau_{n}+\frac{\mathfrak{h}}{2}))} w(\tau_{n})
			- \mathrm{e}^{\frac{\mathfrak{h}\widehat{B}_{0}}{2}}
			\mathrm{e}^{\mathfrak{h}(\widehat{B}_{\bar{z}(\tau_{n})}-\widehat{B}_{0})}
			\mathrm{e}^{\frac{\mathfrak{h}\widehat{B}_{0}}{2}}w(\tau_{n}),  \label{3.16a} \\
			\xi_{w,2}^{n} & = \varepsilon \int_{0}^{\mathfrak{h}} \mathrm{e}^{(\mathfrak{h}-s)\widehat{B}(\varepsilon^q z(\tau_{n}+\frac{\mathfrak{h}}{2}))} E(\widetilde{z}^{n}(s)) \, ds
			- \varepsilon\mathfrak{h}\mathrm{e}^{\frac{\mathfrak{h}\widehat{B}_{0}}{2}}
			\varphi_{1}\biggl(\mathfrak{h}(\widehat{B}_{\bar{z}(\tau_{n})}-\widehat{B}_{0})\biggr) E(\bar{z}(\tau_{n})).	 \label{3.16b}
		\end{align}
	\end{subequations}
	Applying Taylor expansion with integral remainder, i.e.,  
	\begin{equation*}
		\mathrm{e}^{\mathfrak{h} \widehat{B}} = \sum_{i=0}^{k-1} \frac{\mathfrak{h}^i}{i!} \widehat{B}^i 
		+ \frac{1}{(k-1)!} \int_{0}^{\mathfrak{h}} e^{t\widehat{B}} \widehat{B}^k (\mathfrak{h} - t)^{k-1} \, dt,
		\quad k \ge 1,
	\end{equation*}
	it can be derived from \eqref{3.16a} that
	\begin{equation*}
		\begin{aligned}
			\xi_{w,1}^{n} = & \biggl( I + \mathfrak{h}\widehat{B}_{n} + \frac{\mathfrak{h}^2}{2!} \widehat{B}^2_{n} + \cdots \biggr) w(\tau_{n})
			- \biggl( I + \frac{\mathfrak{h}}{2}\widehat{B}_{0} + \frac{\mathfrak{h}^2}{4 \cdot 2!}\widehat{B}^{2}_{0} + \cdots \biggr)	\\
			& \biggl( I + \mathfrak{h}(\widehat{B}_{\bar{z}(\tau_{n})}-\widehat{B}_{0}) + \frac{\mathfrak{h}^2}{2!}(\widehat{B}_{\bar{z}(\tau_{n})}-\widehat{B}_{0})^2 + \cdots \biggr)	\biggl( I + \frac{\mathfrak{h}}{2}\widehat{B}_{0} + \frac{\mathfrak{h}^2}{4 \cdot 2!}\widehat{B}^{2}_{0} + \cdots \biggr) w(\tau_{n}) \\
			= & \, \mathfrak{h} (\widehat{B}_{n}-\widehat{B}_{\bar{z}(\tau_{n})}) w(\tau_{n})
			+ \frac{\mathfrak{h}^2}{2} (\widehat{B}^2_{n}-\widehat{B}^{2}_{\bar{z}(\tau_{n})})  w(\tau_{n}) +\cdots .
		\end{aligned}
	\end{equation*}
	Owing to the conditions \(B(\cdot) \in C^2(\mathbb{R}^3)\) and \(1 \leq q \leq 2\), the use of Taylor expansion yields
	\begin{align} 
		& \left| \widehat{B}_{n} - \widehat{B}_{\bar{z}(\tau_{n})} \right| 
		= \left|\nabla\widehat{B} \cdot \biggl(\varepsilon^q z(\tau_{n}+\frac{\mathfrak{h}}{2}) - \varepsilon^q \bar{z}(\tau_{n})\biggr) \right| \notag \\
		& \,\, \lesssim \varepsilon^q \left| z(\tau_{n}+\frac{\mathfrak{h}}{2}) -z(\tau_{n})-\frac{1}{2}\varepsilon\mathfrak{h}\varphi_{1}\biggl(\frac{\mathfrak{h}\widehat{B}_{0}}{2}\biggr)w(\tau_{n})  \right| \notag \\
		& \,\, = \varepsilon^q \left| z(\tau_{n}) + \frac{\mathfrak{h}}{2}z'(\tau_{n}) + O(\varepsilon \mathfrak{h}^{2}) - z(\tau_{n}) - \frac{1}{2}\varepsilon\mathfrak{h} \biggl(I + \frac{\mathfrak{h}}{2 \cdot 2!}\widehat{B}_{0} + \cdots \biggr) w(\tau_{n})\right| \notag\\
		& \,\, \lesssim \varepsilon^{q+1} \mathfrak{h}^{2},
		\label{3.17}
	\end{align}
	where $\nabla\widehat{B}$ is the derivative of $\widehat{B}$,
	and consequently leads to
	\begin{equation}
		|\xi_{w,1}^{n}| \lesssim \varepsilon^{q+1} \mathfrak{h}^{3},
		\quad 1 \leq q \leq 2, 
		\quad 0 \leq n < \frac{T}{\varepsilon \mathfrak{h}}.
		\label{3.18}
	\end{equation}
	
	Next, we proceed to analyze $\xi_{w,2}^{n}$.
	Performing integration by substitution on the first term of \eqref{3.16b}, we have
	\begin{equation*}
		\varepsilon \int_{0}^{\mathfrak{h}} \mathrm{e}^{(\mathfrak{h}-s)\widehat{B}_{n}} E(\widetilde{z}^{n}(s)) \, ds
		= \varepsilon \mathfrak{h} \int_{0}^{1} \mathrm{e}^{\rho \mathfrak{h} \widehat{B}_{n}} E(\widetilde{z}^{n}((1-\rho)\mathfrak{h})) \, d\rho.
	\end{equation*}
	By means of \eqref{A} and Taylor expansion, it follows directly that
	\begin{align*}
		& E(\widetilde{z}^{n}((1-\rho)\mathfrak{h})) = E(z(\tau_{n} + (1-\rho)\mathfrak{h}) - \zeta_{z}^{n}((1-\rho)\mathfrak{h}) ) \\
		&\,\, = E \biggl( z(\tau_{n} + \frac{\mathfrak{h}}{2}) + (\frac{1}{2} - \rho)\mathfrak{h} z'(\tau_{\rho}^{n}) - \zeta_{z}^{n}((1-\rho)\mathfrak{h}) \biggr) \\
		&\,\, = E\biggl(z(\tau_{n} + \frac{\mathfrak{h}}{2}) \biggr) + \int_{0}^{1}\nabla E(s_{\sigma}) \, d\sigma \biggl[ (\frac{1}{2} - \rho)\mathfrak{h} \varepsilon w(\tau_{\rho}^{n}) - \zeta_{z}^{n}((1-\rho) \mathfrak{h}) \biggr],
	\end{align*}
	where $\nabla E$ represents the derivative of $E$, $\tau_{\rho}^{n} \in (\tau_{n} + \frac{\mathfrak{h}}{2},\tau_{n}+(1-\rho)\mathfrak{h})$ and
	\begin{equation*}
		s_{\sigma} = z(\tau_{n} + \frac{\mathfrak{h}}{2}) + \sigma \biggl[ (\frac{1}{2} - \rho)\mathfrak{h} \varepsilon w(\tau_{\rho}^{n}) - \zeta_{z}^{n}((1-\rho) \mathfrak{h}) \biggr].
	\end{equation*}
	So we can find that
	\begin{align}
		\xi_{w,2}^{n} = & \varepsilon \int_{0}^{\mathfrak{h}} \mathrm{e}^{(\mathfrak{h}-s)\widehat{B}_{n}} E(\widetilde{z}^{n}(s)) \, ds
		- \varepsilon\mathfrak{h}\mathrm{e}^{\frac{\mathfrak{h}\widehat{B}_{0}}{2}}
		\varphi_{1}\biggl(\mathfrak{h}(\widehat{B}_{\bar{z}(\tau_{n})}-\widehat{B}_{0})\biggr) E(\bar{z}(\tau_{n}))  \notag \\
		= & \varepsilon \mathfrak{h} \int_{0}^{1} \mathrm{e}^{\rho \mathfrak{h} \widehat{B}_{n}}  E\biggl(z(\tau_{n} + \frac{\mathfrak{h}}{2}) \biggr) \, d\rho
		- \varepsilon\mathfrak{h}\mathrm{e}^{\frac{\mathfrak{h}\widehat{B}_{0}}{2}}
		\varphi_{1}\biggl(\mathfrak{h}(\widehat{B}_{\bar{z}(\tau_{n})}-\widehat{B}_{0})\biggr) E(\bar{z}(\tau_{n}))
		\notag \\ 
		& + \varepsilon \mathfrak{h} \int_{0}^{1} \mathrm{e}^{\rho \mathfrak{h} \widehat{B}_{n}}
		\int_{0}^{1}\nabla E(s_{\sigma}) \, d\sigma \biggl[ (\frac{1}{2} - \rho)\mathfrak{h} \varepsilon w(\tau_{\rho}^{n}) - \zeta_{z}^{n}((1-\rho) \mathfrak{h}) \biggr] \, d\rho.
		\label{3.19}
	\end{align}
	Applying the midpoint integral formula to \eqref{3.19} 
	and noting
	\begin{align} 
		& \left| E(z(\tau_{n}+\frac{\mathfrak{h}}{2})) - E(\bar{z}(\tau_{n})) \right| 
		= \left|\nabla E \cdot \biggl(z(\tau_{n}+\frac{\mathfrak{h}}{2}) -  \bar{z}(\tau_{n})\biggr) \right| \notag \\
		& \,\, \lesssim \left| z(\tau_{n}+\frac{\mathfrak{h}}{2}) -z(\tau_{n})-\frac{1}{2}\varepsilon\mathfrak{h}\varphi_{1}\biggl(\frac{\mathfrak{h}\widehat{B}_{0}}{2}\biggr)w(\tau_{n})  \right| \notag \\
		& \,\, = \left| z(\tau_{n}) + \frac{\mathfrak{h}}{2}z'(\tau_{n}) + O(\varepsilon \mathfrak{h}^{2}) - z(\tau_{n}) - \frac{1}{2}\varepsilon\mathfrak{h} \biggl(I + \frac{\mathfrak{h}}{2 \cdot 2!}\widehat{B}_{0} + ... \biggr) w(\tau_{n})\right| \notag\\
		& \,\, \lesssim \varepsilon \mathfrak{h}^{2},
		\label{3.20}
	\end{align}
	the following result can be derived
	\begin{equation}
		|\xi_{w,2}^{n}| \lesssim \varepsilon^{2} \mathfrak{h}^{3}, 
		\quad 0 \leq n < \frac{T}{\varepsilon \mathfrak{h}}.
		\label{3.21}
	\end{equation}
	Combining \eqref{3.18} and \eqref{3.21}, we have 
	\begin{equation*}
		|\xi_{w}^{n}| \lesssim \varepsilon^{2} \mathfrak{h}^{3}, 
		\quad 0 \leq n < \frac{T}{\varepsilon \mathfrak{h}}.
	\end{equation*}
	
	$\bullet$   \textbf{Estimate of $\xi_{z}^{n}$.}  
	By subtracting \eqref{3.15} from \eqref{3.13a}, it can be shown that
	\begin{equation*}
		\xi_{z}^{n} = \xi_{z,1}^{n} + \xi_{z,2}^{n}, 
		\quad 0 \leq n < \frac{T}{\varepsilon \mathfrak{h}},
	\end{equation*}
	with
	\begin{subequations}
		\begin{align}
			\xi_{z,1}^{n} = & \varepsilon \int_{0}^{\mathfrak{h}} \mathrm{e}^{s \widehat{B}(\varepsilon^q z(\tau_{n}+\frac{\mathfrak{h}}{2}))} w(\tau_{n}) \, ds \notag \\
			& -
			\frac{\varepsilon\mathfrak{h}}{2}\varphi_{1}\biggl(\frac{\mathfrak{h}\widehat{B}_{0}}{2}\biggr)\Bigl(I+\mathrm{e}^{\mathfrak{h}(\widehat{B}_{\bar{z}(\tau_{n})}-\widehat{B}_{0})}\mathrm{e}^{\frac{\mathfrak{h}\widehat{B}_{0}}{2}}\Bigr)w(\tau_{n}),
			\label{3.23a} \\
			\xi_{z,2}^{n} = & \varepsilon^{2} \int_{0}^{\mathfrak{h}} \int_{0}^{s} \mathrm{e}^{(s-\sigma)\widehat{B}(\varepsilon^q z(\tau_{n}+\frac{\mathfrak{h}}{2}))} E(\widetilde{z}^{n}(\sigma)) \, d\sigma \, ds \notag\\
			& - \frac{\varepsilon^{2}\mathfrak{h}^{2}}{2}\varphi_{1}\biggl(\frac{\mathfrak{h}\widehat{B}_{0}}{2}\biggr)
			\varphi_{1}\biggl(\mathfrak{h}(\widehat{B}_{\bar{z}(\tau_{n})}-\widehat{B}_{0})\biggr) E(\bar{z}(\tau_{n})).	 \label{3.23b}
		\end{align}
	\end{subequations}
	Applying the first mean value theorem for integrals to \eqref{3.23a} yields
	\begin{equation*}
		\xi_{z,1}^{n} = \varepsilon \mathfrak{h} \mathrm{e}^{\lambda \widehat{B}_{n}} w(\tau_{n}) 
		-\frac{\varepsilon\mathfrak{h}}{2}\varphi_{1}\biggl(\frac{\mathfrak{h}\widehat{B}_{0}}{2}\biggr)\Bigl(I+\mathrm{e}^{\mathfrak{h}(\widehat{B}_{\bar{z}(\tau_{n})}-\widehat{B}_{0})}\mathrm{e}^{\frac{\mathfrak{h}\widehat{B}_{0}}{2}}\Bigr)w(\tau_{n}), 
	\end{equation*}
	where $\lambda \in (0,\mathfrak{h})$. 
	According to the definitions of \(\widehat{B}_{n}\) and \(\widehat{B}_{0}\), we have  
	\begin{equation}
		| \widehat{B}_{n} - \widehat{B}_{0} | \lesssim \varepsilon^{q+1}, 
		\quad | \widehat{B}_{\bar{z}(\tau_{n})} - \widehat{B}_{0} | \lesssim \varepsilon^{q+1},
		\quad 1 \leq q \leq 2.
		\label{Bn}
	\end{equation}
	From \eqref{3.17} and \eqref{Bn}, it can be deduced that
	\begin{equation}
		|\xi_{z,1}^{n}| \lesssim \varepsilon^{q+2} \mathfrak{h}^{3},
		\quad 1 \leq q \leq 2, 
		\quad 0 \leq n < \frac{T}{\varepsilon \mathfrak{h}}.
		\label{3.24}
	\end{equation}
	
	For $\xi_{z,2}^{n}$, performing integration by substitution on the first term of \eqref{3.23b} yields
	\begin{equation*}
		\varepsilon^{2} \int_{0}^{\mathfrak{h}} \int_{0}^{s} \mathrm{e}^{(s-\sigma)\widehat{B}_{n}} E(\widetilde{z}^{n}(\sigma)) \, d\sigma ds
		= \varepsilon^{2} \int_{0}^{\mathfrak{h}} \int_{0}^{1} s \mathrm{e}^{\rho s \widehat{B}_{n}} E(\widetilde{z}^{n}((1-\rho)s)) \, d\rho ds.
	\end{equation*}
	Rewriting the integrand in the above expression as
	\begin{align*}
		& E(\widetilde{z}^{n}((1-\rho)s)) = E(z(\tau_{n} + (1-\rho)s) - \zeta_{z}^{n}((1-\rho)s) )  \\
		&\,\, = E \biggl( z(\tau_{n} + \frac{s}{2}) + (\frac{1}{2} - \rho)s z'(\tau_{\rho}^{n}) - \zeta_{z}^{n}((1-\rho)s) \biggr) \\
		&\,\, = E\biggl(z(\tau_{n} + \frac{s}{2}) \biggr) + \int_{0}^{1}\nabla E(s_{\lambda}) \, d\lambda \biggl[ (\frac{1}{2} - \rho)s \varepsilon w(\tau_{\rho}^{n}) - \zeta_{z}^{n}((1-\rho) s) \biggr],
	\end{align*}
	where $\tau_{\rho}^{n} \in (\tau_{n} + \frac{\mathfrak{h}}{2},\tau_{n}+(1-\rho)s)$,
	\begin{equation*}
		s_{\lambda} = z(\tau_{n} + \frac{s}{2}) + \lambda \biggl[ (\frac{1}{2} - \rho)s \varepsilon w(\tau_{\rho}^{n}) - \zeta_{z}^{n}((1-\rho) s) \biggr],
	\end{equation*}
	then we can derive
	\begin{align*}
		\xi_{z,2}^{n} = & \varepsilon^{2} \int_{0}^{\mathfrak{h}} \int_{0}^{s} \mathrm{e}^{(s-\sigma)\widehat{B}_{n}} E(\widetilde{z}^{n}(\sigma)) \, d\sigma \, ds - \frac{\varepsilon^{2}\mathfrak{h}^{2}}{2}\varphi_{1}\biggl(\frac{\mathfrak{h}\widehat{B}_{0}}{2}\biggr)
		\varphi_{1}\biggl(\mathfrak{h}(\widehat{B}_{\bar{z}(\tau_{n})}-\widehat{B}_{0})\biggr) E(\bar{z}(\tau_{n}))  \\
		= & \varepsilon^{2} \int_{0}^{\mathfrak{h}} \int_{0}^{1} s \mathrm{e}^{\rho s \widehat{B}_{n}} E\biggl(z(\tau_{n} + \frac{s}{2}) \biggr) \, d\rho ds \\ 
		& - \varepsilon^{2} \int_{0}^{\mathfrak{h}} \int_{0}^{1} s \varphi_{1}\biggl(\frac{\mathfrak{h}\widehat{B}_{0}}{2}\biggr)
		\varphi_{1}\biggl(\mathfrak{h}(\widehat{B}_{\bar{z}(\tau_{n})}-\widehat{B}_{0})\biggr) E(\bar{z}(\tau_{n})) d\rho ds \\
		& + \varepsilon^{2} \int_{0}^{\mathfrak{h}} \int_{0}^{1} s \mathrm{e}^{\rho s \widehat{B}_{n}}
		\int_{0}^{1} \nabla E(s_{\lambda}) \, d\lambda \biggl[ (\frac{1}{2} - \rho)s \varepsilon w(\tau_{\rho}^{n}) - \zeta_{z}^{n}((1-\rho) s) \biggr] \, d\rho ds.
	\end{align*}
	Using the mean-value property of the derivative, we get
	\begin{align*} 
		& \left| E(z(\tau_{n}+\frac{s}{2})) - E(\bar{z}(\tau_{n})) \right| 
		= \left|\nabla E \cdot \biggl(z(\tau_{n}+\frac{s}{2}) -  \bar{z}(\tau_{n})\biggr) \right|  \\
		& \,\, \lesssim \left| z(\tau_{n}+\frac{s}{2}) -z(\tau_{n})-\frac{1}{2}\varepsilon\mathfrak{h}\varphi_{1}\biggl(\frac{\mathfrak{h}\widehat{B}_{0}}{2}\biggr)w(\tau_{n})  \right|  \\
		& \,\, = \left| z(\tau_{n}) + \frac{s}{2}z'(\tau_{n}) + O(\varepsilon s^{2}) - z(\tau_{n}) - \frac{1}{2}\varepsilon\mathfrak{h} \biggl(I + \frac{\mathfrak{h}}{2 \cdot 2!}\widehat{B}_{0} + ... \biggr) w(\tau_{n})\right|  \\
		& \,\, \lesssim \varepsilon (s-\mathfrak{h}).
	\end{align*}
	Consequently, the following estimate can be obtained
	\begin{equation}
		|\xi_{z,2}^{n}| \lesssim \varepsilon^{3} \mathfrak{h}^{3}, 
		\quad 0 \leq n < \frac{T}{\varepsilon \mathfrak{h}}.
		\label{3.25}
	\end{equation}
	Finally, by combining \eqref{3.24} and \eqref{3.25}, we obtain
	\begin{equation*}
		|\xi_{z}^{n}| \lesssim \varepsilon^{3} \mathfrak{h}^{3}, 
		\quad 0 \leq n < \frac{T}{\varepsilon \mathfrak{h}}.
	\end{equation*}
	The proof of Lemma \ref{lemmalocal} is complete.
\end{proof}

\subsection{Standard global error estimates}\label{subsec3.4}

Building upon the foregoing preparations, a standard global error estimate and the boundedness of the numerical solution can be deduced.

\begin{lemma}\label{le3.1}
For the numerical solutions \( z^n \) and \( w^n \) obtained by applying the S2‑new scheme to the long-time problem \eqref{3.3} on the time interval \([0,T/\varepsilon]\), there exists a positive constant \( \mathfrak{h}_0 \) independent of \( \varepsilon \), such that when the time step satisfies \( 0 < \mathfrak{h} < \mathfrak{h}_0 \), we have
	\begin{equation*}
		|z^n - z(\tau_n)| \lesssim \varepsilon \mathfrak{h}^2, \quad |w^n - w(\tau_n)| \lesssim \varepsilon \mathfrak{h}^2,
		\quad 0 \leq n \leq \frac{T}{\varepsilon \mathfrak{h}},
	\end{equation*}
	and  
	\begin{equation}
		|z^n| \leq \|z\|_{L^\infty(0,T/\varepsilon)} + 1, \quad |w^n| \leq \|w\|_{L^\infty(0,T/\varepsilon)} + 1, \quad 0 \leq n \leq \frac{T}{\varepsilon \mathfrak{h}}.
		\label{3.27}
	\end{equation}
\end{lemma}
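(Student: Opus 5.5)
We will prove Lemma \ref{le3.1} by a standard Lady Windermere's fan argument combined with mathematical induction on $n$. The induction simultaneously proves the error bound and the boundedness \eqref{3.27}. This lets us use Lipschitz constants of $E$, $B$ and $\nabla B$ on a fixed compact ball.

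Assume the bounds hold up to step $n$. Write the scheme \eqref{3.5} as $(z^{n+1},w^{n+1})=\Phi_{\mathfrak h}(z^n,w^n)$. By \eqref{B1}--\eqref{B2}, $e^{n+1}=\zeta^n(\mathfrak h)+\widetilde e^{\,n}$. We then split
\begin{equation*}
\widetilde e^{\,n}=\bigl[\widetilde{(z,w)}^n(\mathfrak h)-\Phi_{\mathfrak h}(z(\tau_n),w(\tau_n))\bigr]+\bigl[\Phi_{\mathfrak h}(z(\tau_n),w(\tau_n))-\Phi_{\mathfrak h}(z^n,w^n)\bigr].
\end{equation*}
The first bracket is the local truncation error $(\xi_z^n,\xi_w^n)$ of Lemma \ref{lemmalocal}, of size $(\varepsilon^3\mathfrak h^3,\varepsilon^2\mathfrak h^3)$. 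Together with \eqref{3.10}--\eqref{3.11}, and using $q\ge1$, the total local defect is
\begin{equation*}
|\delta_z^n|\lesssim\varepsilon^3\mathfrak h^3,\qquad |\delta_w^n|\lesssim\varepsilon^2\mathfrak h^3 .
\end{equation*}

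The second bracket is a stability estimate for $\Phi_{\mathfrak h}$. The key point is that the stiff factors $\mathrm e^{\frac{\mathfrak h}{2}\widehat B_0}$ and $\mathrm e^{\mathfrak h(\widehat B_{\bar z}-\widehat B_0)}$ are orthogonal, because the matrices are skew-symmetric, and $\varphi_1$ of a skew matrix has norm at most $1$. So the only $z$-dependence enters through $\bar z_n$, via $\widehat B(\varepsilon^q\bar z)$ and $E(\bar z)$. For the velocity update we get
\begin{equation*}
|\Delta w^{n+1}|\le |\Delta w^n|+C\mathfrak h\,\varepsilon^q|\Delta \bar z|\,|w^n|+C\varepsilon\mathfrak h|\Delta\bar z|,
\end{equation*}
where $\Delta$ denotes the difference between the exact-data and numerical-data evaluations. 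This uses $\|\mathrm e^{A}-\mathrm e^{A'}\|\le\|A-A'\|$ for skew $A,A'$. We also have $|\Delta\bar z|\le|\Delta z^n|+C\varepsilon\mathfrak h|\Delta w^n|$. Similarly, $|\Delta z^{n+1}|\le|\Delta z^n|+C\varepsilon\mathfrak h|\Delta w^n|+C\varepsilon\mathfrak h(\varepsilon^q+\varepsilon)|\Delta z^n|$.

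Adding these gives
\begin{equation*}
|e_z^{n+1}|+|e_w^{n+1}|\le(1+C\varepsilon\mathfrak h)\bigl(|e_z^n|+|e_w^n|\bigr)+C\varepsilon^2\mathfrak h^3 .
\end{equation*}
Crucially, the amplification factor is $1+C\varepsilon\mathfrak h$ and not $1+C\mathfrak h$. This is exactly what makes the long interval $[0,T/\varepsilon]$ harmless. By the discrete Gronwall inequality over $n\le T/(\varepsilon\mathfrak h)$ steps,
\begin{equation*}
|e^n|\le \mathrm e^{CT}\,n\,C\varepsilon^2\mathfrak h^3\lesssim \varepsilon\mathfrak h^2 .
\end{equation*}

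Choose $\mathfrak h_0$ so that $C\varepsilon\mathfrak h_0^2\le1$ for the resulting constant; since $\varepsilon\le1$ this is independent of $\varepsilon$. Then $|z^{n+1}|\le|z(\tau_{n+1})|+1$ and likewise for $w^{n+1}$, which closes the induction. The constants stay uniform because all Lipschitz constants are taken on the ball of radius $\|z\|_{L^\infty}+2$ (respectively $\|w\|_{L^\infty}+2$), fixed in advance.

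The main obstacle is the stability step. One must check carefully that no $\mathcal O(\mathfrak h)$ amplification, without an $\varepsilon$ factor, is produced in the $w$-equation. Such a term would enter through the difference of exponentials $\mathrm e^{\mathfrak h(\widehat B_{\bar z}-\widehat B_0)}$. This is handled by the explicit $\varepsilon^q$ inside $\widehat B(\varepsilon^q\cdot)$, so that this term contributes $\mathfrak h\varepsilon^q\le\mathfrak h\varepsilon$. Norm preservation of the rotations $\mathrm e^{\frac{\mathfrak h}{2}\widehat B_0}$ then prevents any growth from the stiff part.
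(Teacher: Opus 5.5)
Your proposal is correct and follows essentially the same route as the paper. Both arguments run an induction carrying the a priori bound \eqref{3.27} and split the one-step error into two parts. The first part is the local defects, controlled by Lemma \ref{lemmalocal} together with \eqref{3.10}--\eqref{3.11}. The second part is the stability terms (the paper's $\eta_z^n,\eta_w^n$), controlled by orthogonality of the exponentials and the $\varepsilon^q$ factor in $\widehat B(\varepsilon^q\cdot)$. This gives a $1+C\varepsilon\mathfrak h$ amplification, and Gronwall over $T/(\varepsilon\mathfrak h)$ steps then yields $\varepsilon\mathfrak h^2$ and closes the boundedness by taking $\mathfrak h_0$ small. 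The only cosmetic difference is that you apply the one-step multiplicative Gronwall directly, whereas the paper first sums the inequalities and then applies Gronwall to the summed form.
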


\begin{proof}
	For \( n = 0 \), the initial conditions \( z^0 = x_0 \) and \( w^0 = v_0 \) imply that Lemma \ref{le3.1} is trivially satisfied. Now, assume that for some integer \( m \) with \( 0 \leq m < \frac{T}{\varepsilon \mathfrak{h}} \), the estimate \eqref{3.27} holds. We proceed to show that the same estimate remains valid for \( m + 1 \).
	
	Subtracting \eqref{3.13a}-\eqref{3.13b} from the scheme \eqref{3.5} for \( n \leq m \) and using the relation given in \eqref{B1}-\eqref{B2}, we obtain
	\begin{subequations}\label{3.28}
		\begin{align}
			e_{z}^{n+1} &= e_z^n + \frac{\varepsilon \mathfrak{h}}{2} \varphi_{1}(\frac{\mathfrak{h}}{2}\widehat{B}_{0})
			\Bigl(I+\mathrm{e}^{\mathfrak{h}(\widehat{B}_{\bar{z}(\tau_{n})}-\widehat{B}_{0})}\mathrm{e}^{\frac{\mathfrak{h}}{2}\widehat{B}_{0}}\Bigr) e_w^n 
			+ \eta_z^n + \xi_z^n + \zeta_z^n(\mathfrak{h}), \label{3.28a} \\
			e_{w}^{n+1} &= \mathrm{e}^{\frac{\mathfrak{h}}{2}\widehat{B}_{0}}
			\mathrm{e}^{\mathfrak{h}(\widehat{B}_{\bar{z}(\tau_{n})}-\widehat{B}_{0})}\mathrm{e}^{\frac{\mathfrak{h}}{2}\widehat{B}_{0}} e_w^n
			+ \eta_w^n + \xi_w^n + \zeta_w^n(\mathfrak{h}), \quad 0 \leq n \leq m, \label{3.28b}
		\end{align}
	\end{subequations}
	where 
	\begin{align*}
		\eta_{z}^{n} = & \left[ \frac{\varepsilon \mathfrak{h}}{2} \varphi_{1}(\frac{\mathfrak{h}}{2}\widehat{B}_{0}) \mathrm{e}^{\mathfrak{h}(\widehat{B}_{\bar{z}(\tau_{n})}-\widehat{B}_{0})}
		\mathrm{e}^{\frac{\mathfrak{h}}{2}\widehat{B}_{0}}
		- \frac{\varepsilon \mathfrak{h}}{2} \varphi_{1}(\frac{\mathfrak{h}}{2}\widehat{B}_{0}) \mathrm{e}^{\mathfrak{h}(\widehat{B}_{\bar{z}_{n}}-\widehat{B}_{0})}\mathrm{e}^{\frac{\mathfrak{h}}{2}\widehat{B}_{0}} \right] w^{n}   \\
		& + \frac{\varepsilon^{2}\mathfrak{h}^{2}}{2}\varphi_{1}
		\biggl(\frac{\mathfrak{h}}{2} \widehat{B}_{0} \biggr)
		\varphi_{1}\biggl(\mathfrak{h}(\widehat{B}_{\bar{z}(\tau_{n})}-\widehat{B}_{0})\biggr)
		E(\bar{z}(\tau_{n})) \\
		& - \frac{\varepsilon^{2}\mathfrak{h}^{2}}{2}\varphi_{1}
		\biggl(\frac{\mathfrak{h}}{2} \widehat{B}_{0} \biggr)
		\varphi_{1}\biggl(\mathfrak{h}(\widehat{B}_{\bar{z}_{n}}-\widehat{B}_{0})\biggr)
		E(\bar{z}_{n}),  \\
		\eta_{w}^{n} = & \left[ \mathrm{e}^{\frac{\mathfrak{h}}{2}\widehat{B}_{0}}
		\mathrm{e}^{\mathfrak{h}(\widehat{B}_{\bar{z}(\tau_{n})}-\widehat{B}_{0})}\mathrm{e}^{\frac{\mathfrak{h}}{2}\widehat{B}_{0}}
		- \mathrm{e}^{\frac{\mathfrak{h}}{2}\widehat{B}_{0}}
		\mathrm{e}^{\mathfrak{h}(\widehat{B}_{\bar{z}_{n}}-\widehat{B}_{0})}
		\mathrm{e}^{\frac{\mathfrak{h}}{2}\widehat{B}_{0}} \right] w^{n}  \\
		& + \varepsilon\mathfrak{h}\mathrm{e}^{\frac{\mathfrak{h}}{2}\widehat{B}_{0}}
		\varphi_{1}\biggl(\mathfrak{h}(\widehat{B}_{\bar{z}(\tau_{n})}-\widehat{B}_{0})\biggr) E(\bar{z}(\tau_{n})) - \varepsilon\mathfrak{h}\mathrm{e}^{\frac{\mathfrak{h}}{2}\widehat{B}_{0}}
		\varphi_{1}\biggl(\mathfrak{h}(\widehat{B}_{\bar{z}_{n}}-\widehat{B}_{0})\biggr) E(\bar{z}_{n}).
	\end{align*}
	We first estimate the leading term of $\eta_{z}^{n}$. 
	From the notations of \eqref{C1} and \eqref{C2}, we have
	\begin{align} 
		& \left|  \widehat{B}_{\bar{z}(\tau_{n})} - \widehat{B}_{\bar{z}_{n}} \right| 
		= \left|\nabla\widehat{B} \cdot (\varepsilon^q \bar{z}(\tau_{n}) 
		- \varepsilon^q \bar{z}_{n} ) \right| \notag \\
		& \,\, \lesssim \varepsilon^q \left| z(\tau_{n})+\frac{1}{2}\varepsilon\mathfrak{h}\varphi_{1}
		\biggl(\frac{\mathfrak{h}\widehat{B}_{0}}{2}\biggr)w(\tau_{n}) - z^{n} - \frac{1}{2}\varepsilon\mathfrak{h}\varphi_{1}\biggl(\frac{\mathfrak{h}\widehat{B}_{0}}{2}\biggr)w^{n}  \right| \notag \\
		& \,\, \lesssim \varepsilon^q (|e_{z}^{n}| + \varepsilon \mathfrak{h} |e_{w}^{n}|),
		\quad 1 \leq q \leq 2,
		\label{3.29}
	\end{align}
	thus the leading-order estimate for $\eta_{z}^{n}$ is as follows
	\begin{align*}
		& \left| \left[ \frac{\varepsilon \mathfrak{h}}{2} \varphi_{1}(\frac{\mathfrak{h}}{2}\widehat{B}_{0}) \mathrm{e}^{\mathfrak{h}(\widehat{B}_{\bar{z}(\tau_{n})}-\widehat{B}_{0})}
		\mathrm{e}^{\frac{\mathfrak{h}}{2}\widehat{B}_{0}}
		- \frac{\varepsilon \mathfrak{h}}{2} \varphi_{1}(\frac{\mathfrak{h}}{2}\widehat{B}_{0}) \mathrm{e}^{\mathfrak{h}(\widehat{B}_{\bar{z}_{n}}-\widehat{B}_{0})}\mathrm{e}^{\frac{\mathfrak{h}}{2}\widehat{B}_{0}} \right] w^{n} \right|   \\
		& \,\, \lesssim \varepsilon \mathfrak{h}^{2} \cdot \varepsilon^q (|e_{z}^{n}| + \varepsilon \mathfrak{h} |e_{w}^{n}|)  \\
		& \,\, \lesssim \varepsilon^{q+1} \mathfrak{h}^{2} |e_{z}^{n}| + 
		\varepsilon^{q+2} \mathfrak{h}^{3} |e_{w}^{n}|,
		\quad 1 \leq q \leq 2. 
	\end{align*}
	Meanwhile, by observing that $1 \leq q \leq 2$ and
	\begin{equation}
		\left| E(\bar{z}(\tau_{n})) - E(\bar{z}_{n}) \right| \lesssim 
		|e_{z}^{n}| + \varepsilon \mathfrak{h} |e_{w}^{n}|,
		\label{En}
	\end{equation}
	the estimates for the remaining two terms of $\eta_{z}^{n}$ are derived as
	\begin{align*}
		&\left| \frac{\varepsilon^{2}\mathfrak{h}^{2}}{2}\varphi_{1}
		\biggl(\frac{\mathfrak{h}}{2} \widehat{B}_{0} \biggr)
		\varphi_{1}\biggl(\mathfrak{h}(\widehat{B}_{\bar{z}(\tau_{n})}-\widehat{B}_{0})\biggr)
		E(\bar{z}(\tau_{n}))
		- \frac{\varepsilon^{2}\mathfrak{h}^{2}}{2}\varphi_{1}
		\biggl(\frac{\mathfrak{h}}{2} \widehat{B}_{0} \biggr)
		\varphi_{1}\biggl(\mathfrak{h}(\widehat{B}_{\bar{z}_{n}}-\widehat{B}_{0})\biggr)
		E(\bar{z}_{n}) \right| \\
		& \,\, = \left| \frac{\varepsilon^{2}\mathfrak{h}^{2}}{2}\varphi_{1}
		\biggl(\frac{\mathfrak{h}}{2} \widehat{B}_{0} \biggr)
		\varphi_{1}\biggl(\mathfrak{h}(\widehat{B}_{\bar{z}(\tau_{n})}-\widehat{B}_{0})\biggr)
		[E(\bar{z}(\tau_{n})) - E(\bar{z}_{n})] \right|  \\
		& \,\,\,\,\,\,\,\,\, + \left| \frac{\varepsilon^{2}\mathfrak{h}^{2}}{2}\varphi_{1}
		\biggl(\frac{\mathfrak{h}}{2} \widehat{B}_{0} \biggr)
		\left[ \varphi_{1}\biggl(\mathfrak{h}(\widehat{B}_{\bar{z}(\tau_{n})}-\widehat{B}_{0})\biggr)
		- \varphi_{1}\biggl(\mathfrak{h}(\widehat{B}_{\bar{z}_{n}}-\widehat{B}_{0})\biggr)
		\right] E(\bar{z}_{n}) \right| \\
		& \,\, \lesssim \varepsilon^{2} \mathfrak{h}^{2} 
		\left| E(\bar{z}(\tau_{n})) - E(\bar{z}_{n}) \right|
		+ \varepsilon^{2} \mathfrak{h}^{2} \cdot \mathfrak{h} 
		|\widehat{B}_{\bar{z}(\tau_{n})} - \widehat{B}_{\bar{z}_{n}}|   \\
		& \,\, \lesssim \varepsilon^{2} \mathfrak{h}^{2} 
		(|e_{z}^{n}| + \varepsilon \mathfrak{h} |e_{w}^{n}|)
		+ \varepsilon^{2} \mathfrak{h}^{3} \cdot \varepsilon^q
		(|e_{z}^{n}| + \varepsilon \mathfrak{h} |e_{w}^{n}|)  \\
		& \,\, \lesssim \varepsilon^{2} \mathfrak{h}^{2} |e_{z}^{n}| + 
		\varepsilon^{3} \mathfrak{h}^{3} |e_{w}^{n}|. 
	\end{align*}
	Based on the above derivation, we obtain
	\begin{equation}
		|\eta_{z}^{n}| \lesssim \varepsilon^{2} \mathfrak{h}^{2} |e_{z}^{n}| + 
		\varepsilon^{3} \mathfrak{h}^{3} |e_{w}^{n}|, \quad 0 \leq n \leq m.
		\label{3.30}
	\end{equation}
	Similarly, from \eqref{3.29}, \eqref{En} and $1 \leq q \leq 2$, it can be straightforwardly deduced that
	\begin{equation}
		|\eta_{w}^{n}| \lesssim \varepsilon \mathfrak{h} |e_{z}^{n}| + 
		\varepsilon^{2} \mathfrak{h}^{2} |e_{w}^{n}|, \quad 0 \leq n \leq m.
		\label{3.31}
	\end{equation}
	
	Taking absolute values on both sides of \eqref{3.28a} and \eqref{3.28b} and applying the triangle inequality, along with the orthogonality property of the matrix $\mathrm{e}^{\mathfrak{h} \widehat{B}}$, we derive 
	\begin{align*}
		|e_{z}^{n+1}| & < |e_{z}^{n}|+\varepsilon{\mathfrak{h}}|e_{w}^{n}|+|\eta _{z}^{n}|+|\xi_{z}^{n}|+|\zeta_{z}^{n}({\mathfrak{h}})|,   \\
		|e_{w}^{n+1}| & <|e_{w}^{n}|
		+|\eta_{w}^{n}|+|\xi_{w}^{n}|+|\zeta_{w}^{n}({\mathfrak{h}})|,
		\quad 0\leq n\leq m.
	\end{align*}
	Adding these inequalities together and using \eqref{3.30}-\eqref{3.31}, it readily follows that
	\begin{equation*}
		|e_{z}^{n+1}|+|e_{w}^{n+1}|-|e_{z}^{n}|-|e_{w}^{n}|
		\lesssim \varepsilon{\mathfrak{h}} (|e_{z}^{n}|+|e_{w}^{n}|)+|\xi_{z}^{n}|+|\xi_{w}^{n}|+|\zeta_{z}^{n}(\mathfrak{h})|
		+|\zeta_{w}^{n}(\mathfrak{h})|, \quad 0\leq n\leq m.
	\end{equation*}
	Summing the above inequalities over \(0 \leq n \leq m\) and noting that \(e_{z}^{0} = e_{w}^{0} = 0\), we obtain
	\begin{equation*}
		|e_{z}^{m+1}|+|e_{w}^{m+1}| \lesssim \varepsilon {\mathfrak{h}} \sum_{n=0}^{m}(|e_{z}^{n}|+|e_{w}^{n}|)
		+\sum_{n=0}^{m}(|\xi_{z}^{n}|+|\xi_{w}^{n}|+|\zeta_{z}^{n}(\mathfrak{h})|
		+|\zeta_{w}^{n}(\mathfrak{h})|). 
	\end{equation*}
	From the error estimates in \eqref{3.10}, \eqref{3.11} and \eqref{local}, together with the conditions $1 \leq q \leq 2$ and  $m\varepsilon{\mathfrak{h}}\lesssim 1$, we can get
	\begin{equation*}
		|e_{z}^{m+1}|+|e_{w}^{m+1}| \lesssim \varepsilon {\mathfrak{h}} \sum_{n=0}^{m}(|e_{z}^{n}|+|e_{w}^{n}|) + \varepsilon {\mathfrak{h}}^{2}, 
		\quad 0\leq m < \frac{T}{\varepsilon{\mathfrak{h}}}, 
	\end{equation*}
	and applying Gronwall's inequality to the above inequality yields
	\begin{equation*}
		|e_{z}^{m+1}|+|e_{w}^{m+1}| \lesssim \varepsilon {\mathfrak{h}}^{2}, 
		\quad 0\leq m < \frac{T}{\varepsilon{\mathfrak{h}}}.
	\end{equation*}
	Since
	\begin{equation*}
		|z^{m+1}|\leq|z(\tau_{m+1})|+|e_{z}^{m+1}|,
		\quad |w^{m+1}|\leq|w(\tau_{m+1})|+|e_{w}^{m+1}|,
		\quad 0\leq m < \frac{T}{\varepsilon{\mathfrak{h}}},  
	\end{equation*}
	we can choose a positive constant \(\mathfrak{h}_{0}\) independent of both \(\varepsilon\) and \(m\), such that for \(0 < \mathfrak{h} \leq \mathfrak{h}_{0}\), the estimate \eqref{3.27} holds for \(m+1\). This completes the induction step and concludes the proof of the convergence Lemma \ref{le3.1}. 
\end{proof}

\subsection{Improved error estimate}\label{subsec3.5}

The aforementioned standard error analysis fails to yield the optimal error bound for the scheme. To obtain an improved estimate, it is necessary to exploit the skew-symmetry of the magnetic field to construct a propagator that generates a periodic flow. Based on the standard error analysis  and by carefully analyzing the error propagation over each period over long times, we   improve the error bound from $\mathcal{O}(\varepsilon \mathfrak{h}^{2})$ to $\mathcal{O}(\varepsilon^q \mathfrak{h}^{2})$ with $1 \leq q \leq 2$.

\begin{lemma}\label{le3.2}
Under the conditions of \( E(\cdot), B(\cdot) \in C^2(\mathbb{R}^3) \) and Lemma \ref{le3.1}, and let \(T_0 > 0\) be the single time period in the periodic flow generated by \(e^{\tau\widehat{B}(0)}\).  There exists a positive constant $N_0$ independent of $\varepsilon$, such that for any integer $N > N_0$ with the time step $\mathfrak{h} = T_0/N$, the following error bounds hold:
	\begin{equation}
		|z^n - z(\tau_n)| \lesssim \varepsilon^q \mathfrak{h}^{2}, 
		\quad |w_{\parallel}^n - w_{\parallel}(\tau_n)| \lesssim \varepsilon^q \mathfrak{h}^{2}, 
		\quad 1 \leq q \leq 2,
		\quad 0 \leq n \leq \frac{T}{\varepsilon \mathfrak{h}}.
		\label{3.32}
	\end{equation}
	The definitions of \(w_{\parallel}\) and \(w^n_{\parallel}\) are presented below 
	\begin{align*}
		w_{\parallel}(\tau) & := \frac{B(\varepsilon^q z(\tau))}{|B(\varepsilon^q z(\tau))|} \left( \frac{B(\varepsilon^q z(\tau))}{|B(\varepsilon^q z(\tau))|} \cdot w(\tau) \right), \\
		w_{\parallel}^n & := \frac{B(\varepsilon^q z^n)}{|B(\varepsilon^q z^n)|} \left( \frac{B(\varepsilon^q z^n)}{|B(\varepsilon^q z^n)|} \cdot w^n \right).
	\end{align*}
\end{lemma}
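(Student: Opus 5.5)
We will start from the error recursion \eqref{3.28} of Lemma \ref{le3.1}. The velocity part is linear in $e_w^n$ through the orthogonal propagator $M_n:=\mathrm{e}^{\frac{\mathfrak{h}}{2}\widehat{B}_{0}}\mathrm{e}^{\mathfrak{h}(\widehat{B}_{\bar z(\tau_n)}-\widehat{B}_0)}\mathrm{e}^{\frac{\mathfrak{h}}{2}\widehat{B}_{0}}$. By \eqref{Bn} we have $|\widehat{B}_{\bar z(\tau_n)}-\widehat{B}_0|\lesssim\varepsilon^{q+1}$ on the whole interval, and also $|\widehat{B}_0-\widehat{B}(0)|\lesssim\varepsilon^q$. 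Hence $M_n=\mathrm{e}^{\mathfrak{h}\widehat{B}(0)}+O(\varepsilon^q\mathfrak{h})$. Setting $R:=\mathrm{e}^{\mathfrak{h}\widehat{B}(0)}$, the choice $\mathfrak{h}=T_0/N$ gives $R^N=I$.

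Unrolling \eqref{3.28b} gives
\begin{equation*}
e_w^{n}=\sum_{k=0}^{n-1}M_{n-1}\cdots M_{k+1}\bigl(\eta_w^k+\xi_w^k+\zeta_w^k(\mathfrak{h})\bigr).
\end{equation*}
We substitute this into the position recursion \eqref{3.28a}. Writing $P:=\frac{\varepsilon\mathfrak{h}}{2}\varphi_1(\frac{\mathfrak{h}}{2}\widehat{B}_0)(I+\cdots)$, the term $\sum_n P e_w^n$ is the dangerous one. A naive bound using Lemma \ref{le3.1} gives $\varepsilon\mathfrak{h}\cdot\frac{1}{\varepsilon\mathfrak{h}}\cdot\varepsilon\mathfrak{h}^2=\varepsilon\mathfrak{h}^2$, which is not good enough. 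The key step is to exchange the order of summation, so that each local velocity defect $\delta^k:=\xi_w^k+\zeta_w^k(\mathfrak{h})$ is multiplied by
\begin{equation*}
\sum_{n>k}P\,M_{n-1}\cdots M_{k+1}=\frac{\varepsilon\mathfrak{h}}{2}\sum_{j}(I+R)R^{j}+O(\text{perturbation}).
\end{equation*}
Because $R^N=I$, the sum over each complete block of $N$ consecutive indices equals $\sum_{j=0}^{N-1}R^j$. On the range of $\widehat{B}(0)$ this sum vanishes, and on its kernel (the direction of $B(0)$) it equals $NI$.

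We then treat the two components separately. For the perpendicular part, full periods cancel, leaving at most one incomplete period. Its contribution is $\varepsilon\mathfrak{h}\cdot N\cdot|\delta^k|\lesssim\varepsilon|\delta^k|$, where $N\mathfrak{h}=T_0$. The commutator errors from replacing $M_n$ by $R$ are $O(\varepsilon^q\mathfrak{h})$ per step, which over one period gives $O(\varepsilon^q)$. Summing over $k$ with $|\delta^k|\lesssim\varepsilon^2\mathfrak{h}^3$ and $k\le T/(\varepsilon\mathfrak{h})$ yields $\varepsilon\cdot\varepsilon\mathfrak{h}^2\le\varepsilon^q\mathfrak{h}^2$ when $q\le2$. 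For the parallel part, we need the parallel component of the local defects themselves to be small. The truncation error $\xi_w$ along $B$ comes only from the $E$-term $\xi_{w,2}$. We will refine \eqref{3.21}, using the symmetry of the scheme and the midpoint structure, to show that its projection onto $B(0)$ is $O(\varepsilon^{q+1}\mathfrak{h}^3)$ plus terms that telescope. If this refinement fails, we will instead use that the parallel error is driven by $\varepsilon\,\nabla E\cdot e_z$, and close a discrete Gronwall argument jointly for $(e_z, e_{w,\parallel})$.

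The nonlinear terms $\eta_w^k,\eta_z^k$ are Lipschitz in $(e_z^k,\varepsilon\mathfrak{h}e_w^k)$. Their contribution is absorbed by Gronwall, after the same period-averaging is applied to the $\varepsilon\mathfrak{h}|e_z|$ part. Here Lemma \ref{le3.1} supplies a priori boundedness, and it also supplies $|e_w|\lesssim\varepsilon\mathfrak{h}^2$ to control the remainders. This gives $|e_z^n|\lesssim\varepsilon^q\mathfrak{h}^2$. Finally, $w_\parallel^n-w_\parallel(\tau_n)$ splits into the projection of $e_w^n$, which is the parallel estimate, plus the variation of the projector. The latter is $O(\varepsilon^q|e_z^n|)$, which is smaller than needed.

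The main obstacle is the rigorous period-cancellation. The propagator $M_n$ is only approximately $R$, and it varies with $n$, so the error from replacing products of $M_n$ by $R^j$ must be controlled over one period at the level $\varepsilon^q$ without losing a factor of $N$. The cancellation of the parallel part of the defect is the second delicate point. The restriction $N\ge N_0$ is used to ensure $\mathfrak{h}<\mathfrak{h}_0$, so that Lemma \ref{le3.1} applies.
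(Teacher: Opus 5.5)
Your proposal has a genuine gap, and it sits at the step that carries the whole improvement: the component of the local velocity defect along the field. In your exchanged sum, the parallel part of $\delta^k$ is weighted by $\varepsilon\mathfrak{h}(n-k)=O(1)$, and by $1$ in $e_{w,\parallel}$. So you need $\sum_k \hat b\cdot\xi_w^k=O(\varepsilon^q\mathfrak{h}^2)$. Lemma~\ref{lemmalocal} gives only $\varepsilon^2\mathfrak{h}^3$ per step, i.e.\ $\varepsilon\mathfrak{h}^2$ after $T/(\varepsilon\mathfrak{h})$ steps. That is the unimproved bound of Lemma~\ref{le3.1}, and it is insufficient as soon as $q>1$. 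Your fallback does not avoid this. A joint discrete Gronwall for $(e_z,e_{w,\parallel})$ still has $\sum_k\hat b\cdot\xi_w^k$ as its inhomogeneity, so it closes only at $\varepsilon\mathfrak{h}^2$. By contrast, what you call the main obstacle is harmless. In your formulation $M_j\to R$ must be replaced over the whole remaining interval, not over one period. Even so, the crude kernel bound $\varepsilon\mathfrak{h}\sum_{j>k}(j-k)\,\varepsilon^q\mathfrak{h}\lesssim\varepsilon^{q-1}$, multiplied by $\sum_k|\delta^k|\lesssim\varepsilon\mathfrak{h}^2$ (with $|\delta^k|\lesssim\varepsilon^2\mathfrak{h}^3$ as you assume), already gives $\varepsilon^q\mathfrak{h}^2$.

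The paper argues differently. It unrolls $e_w$ only inside one gyroperiod, starting from $e_w^{0,m}$. It uses that $\sum_{n=0}^{N-1}\mathrm{e}^{(n+\frac12)\mathfrak{h}\widehat B_{00}}$ is $N$ times the projection onto $B(0)$, which reduces the drift of $e_z$ over a period to $\varepsilon T_0(\widetilde B_{00}\cdot e_w^{0,m})\widetilde B_{00}$. It then runs a period-to-period Gronwall for $(|e_z^{0,m}|,|e_{w,\parallel}^{0,m}|)$. Your missing step is covered there by the pointwise claim $|\widetilde B^{n+1,m}\cdot\xi_w^{n,m}|\lesssim\varepsilon^{q+2}\mathfrak{h}^3$, said to follow from \eqref{3.16a}--\eqref{3.16b} and Rodrigues' formula, and fed into \eqref{3.46}.

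Your suspicion that no such pointwise bound is available is justified. Take $\hat b=B_n/|B_n|$, so that $\hat b^{\intercal}\mathrm{e}^{s\widehat B_n}=\hat b^{\intercal}$. In \eqref{3.16b} the matrix factors of the scheme then contribute only $O(\varepsilon^{q+1}\mathfrak{h}^3)$ along $\hat b$. However, the midpoint-rule remainder of $\varepsilon\int_0^{\mathfrak{h}}\hat b\cdot E(\widetilde z^n(s))\,ds$ has leading part $\frac{\varepsilon^2\mathfrak{h}^3}{24}\,\hat b\cdot\nabla E(z(\tau_n))\bigl(w(\tau_n)\times B_n\bigr)$. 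This is generically of exact size $\varepsilon^2\mathfrak{h}^3$, even for a uniform field.

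So what is missing is not a sharper local estimate, and nothing coming from the symmetry of the scheme helps here. What is missing is a cancellation under summation. This term is linear in the gyrating component $w_\perp$. You must show that its accumulated contribution, with your slowly varying weights, is $O(\varepsilon^q\mathfrak{h}^2)$. One route is summation by parts against the discrete rotation. Another is summing over full periods using $\mathfrak{h}=T_0/N$, while tracking the slow drift of $z$ and of $|B(\varepsilon^q z)|$. Without that argument, neither the $e_z$ bound nor the $w_\parallel$ bound follows from your outline.
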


\begin{proof}
	Due to the skew-symmetry of \(\widehat{B}\), the propagator \(e^{\tau\widehat{B}(0)}\) generates a periodic flow, and let \(T_0 > 0\) be its single period. Hence, for any fixed $T > 0$, we can rewrite it as  
	\begin{equation*}
		\frac{T}{\varepsilon} = T_0 M + \tau_r, \quad 0 \leq \tau_r < T_0,
	\end{equation*}
	where $M$ is an integer given by  
	\begin{equation*}
		M = \bigg\lfloor  \frac{T}{\varepsilon T_0} \bigg\rfloor  = O(1/\varepsilon).
	\end{equation*}
	Without loss of generality and for simplicity of the analysis, only the case $\tau_r = 0$ will be considered in the following discussion.
	That is, the long-time $T/\varepsilon$ is divided into $M$ periods each of length $T_0$.
	
	First, select a positive constant \(N_0\) such that when \(N \geq N_0\), we have \(0 < \mathfrak{h} = T_0 / N \leq \mathfrak{h}_0 = T_0 / N_0\). This evidently satisfies the condition of Lemma \ref{le3.1}, so the boundedness results in \eqref{3.27} regarding the numerical solution still hold. 
	Then, we update the notation by using $\tau_n^m$ to denote the time grid points within the $m$th period, i.e.,
	\begin{equation*}
		\tau_n^m = mT_0 + n \mathfrak{h}, 
		\quad 0 \leq n \leq N,
		\quad 0 \leq m < M.
	\end{equation*}
	The numerical solutions and errors of scheme \eqref{3.5} on the updated time grid points $\tau_n^m$ are defined as follows
	\begin{equation*}
		z_n^m \approx z(\tau_n^m), \quad w_n^m \approx w(\tau_n^m), 
		\quad 0 \leq n \leq N,
		\quad 0 \leq m < M,
	\end{equation*}
	\begin{equation*}
		e_{z}^{n,m} = z(\tau_{n}^{m}) - z_{n}^{m}, \quad e_{w}^{n,m} = w(\tau_{n}^{m}) - w_{n}^{m}, 
		\quad 0 \leq n \leq N,
		\quad 0 \leq m < M.
	\end{equation*}
	Observing our notations, it is straightforward to see that \(e_{z}^{0,m+1} = e_{z}^{N,m}\) and \(e_{w}^{0,m+1} = e_{w}^{N,m}\).  
	Correspondingly, by analogy with the error expressions \eqref{3.28a}-\eqref{3.28b}, we can derive
	\begin{subequations}\label{3.33}
		\begin{align}
			e_{z}^{n+1,m} = & e_z^{n,m} + \frac{\varepsilon \mathfrak{h}}{2} \varphi_{1}(\frac{\mathfrak{h}}{2}\widehat{B}_{0})
			\Bigl(I+\mathrm{e}^{\mathfrak{h}(\widehat{B}_{\bar{z}(\tau_{n}^{m})}-\widehat{B}_{0})}\mathrm{e}^{\frac{\mathfrak{h}}{2}\widehat{B}_{0}}\Bigr) e_w^{n,m} + \eta_z^{n,m} + \xi_z^{n,m} + \zeta_z^{n,m}(\mathfrak{h}), \label{3.33a} \\
			e_{w}^{n+1,m} = & \mathrm{e}^{\frac{\mathfrak{h}}{2}\widehat{B}_{0}}
			\mathrm{e}^{\mathfrak{h}(\widehat{B}_{\bar{z}(\tau_{n}^{m})}-\widehat{B}_{0})}\mathrm{e}^{\frac{\mathfrak{h}}{2}\widehat{B}_{0}} e_w^{n,m} 
			+ \eta_w^{n,m} + \xi_w^{n,m} + \zeta_w^{n,m}(\mathfrak{h}), \quad 0 \leq n \leq N-1,
			\quad 0 \leq m <M.
			\label{3.33b}
		\end{align}
	\end{subequations}
	
	In order to obtain the improved error bound, we now analyze how the error propagates across each period. For a fixed $m$, summing \eqref{3.33a} over $n$ from $0$ to $N-1$ gives
	\begin{align}
		e_{z}^{N,m} = & e_z^{0,m} + \sum_{n=0}^{N-1} \left[
		\frac{\varepsilon \mathfrak{h}}{2} \varphi_{1}(\frac{\mathfrak{h}}{2}\widehat{B}_{0})
		\Bigl(I+\mathrm{e}^{\mathfrak{h}(\widehat{B}_{\bar{z}(\tau_{n}^{m})}-\widehat{B}_{0})}\mathrm{e}^{\frac{\mathfrak{h}}{2}\widehat{B}_{0}}\Bigr) \right] e_w^{n,m} + \sum_{n=0}^{N-1} (\eta_z^{n,m} + \xi_z^{n,m} + \zeta_z^{n,m}(\mathfrak{h}) )
		\notag \\
		= & e_z^{0,m} + \varepsilon \mathfrak{h} \sum_{n=0}^{N-1} \mathrm{e}^{\frac{\mathfrak{h}}{2} \widehat{B}_{00}} e_{w}^{n,m}
		+ \sum_{n=0}^{N-1} (\eta_z^{n,m} + \xi_z^{n,m} + \zeta_z^{n,m}(\mathfrak{h}) ) +\delta_{z}^{n,m},
		\quad 0 \leq m < M,
		\label{3.34}
	\end{align}
	where $\widehat{B}_{00} = \widehat{B}(0)$ and 
	\begin{equation*}
		\delta_{z}^{n,m} = \varepsilon \mathfrak{h} \sum_{n=0}^{N-1}
		\left[
		\frac{1}{2} \varphi_{1}(\frac{\mathfrak{h}}{2}\widehat{B}_{0})
		\Bigl(I+\mathrm{e}^{\mathfrak{h}(\widehat{B}_{\bar{z}(\tau_{n}^{m})}-\widehat{B}_{0})}\mathrm{e}^{\frac{\mathfrak{h}}{2}\widehat{B}_{0}}\Bigr) - \mathrm{e}^{\frac{\mathfrak{h}}{2} \widehat{B}_{00}} \right] e_w^{n,m}.
	\end{equation*}
	Since \(B(\cdot) \in C^2(\mathbb{R}^3)\) and \(1 \leq q \leq 2\), it is readily obtained that
	\begin{equation}
		| B(\varepsilon^q z(\tau)) - B(0) | \lesssim \varepsilon^q, 
		\quad 1 \leq q \leq 2,
		\quad 0 \leq \tau \leq \frac{T}{\varepsilon}.
		\label{3.35}
	\end{equation}
	From \eqref{3.35} and the estimate \( | e_{w}^{n,m} | \lesssim \varepsilon \mathfrak{h}^{2} \) in Lemma \ref{le3.1}, it follows that
	\begin{equation}
		| \delta_{z}^{n,m} | \lesssim \varepsilon^{q+2} \mathfrak{h}^{3}, 
		\quad 1 \leq q \leq 2,
		\quad 0 \leq m < M.
		\label{3.36}
	\end{equation}
	
	On the other hand, \eqref{3.33b} can be written as 
	\begin{align}
		e_{w}^{n+1,m} = \mathrm{e}^{\mathfrak{h} \widehat{B}_{00}}  e_{w}^{n,m}
		+ \eta_w^{n,m} + \xi_w^{n,m} + \zeta_w^{n,m}(\mathfrak{h}) + \delta_{w}^{n,m}, 
		\quad 0 \leq n \leq N-1,
		\quad 0 \leq m < M.
		\label{3.37}
	\end{align}
	where
	\begin{equation*}
		\delta_{w}^{n,m} = \left[ \mathrm{e}^{\frac{\mathfrak{h}}{2}\widehat{B}_{0}}
		\mathrm{e}^{\mathfrak{h}(\widehat{B}_{\bar{z}(\tau_{n}^{m})}-\widehat{B}_{0})}
		\mathrm{e}^{\frac{\mathfrak{h}}{2}\widehat{B}_{0}} 
		- \mathrm{e}^{\mathfrak{h} \widehat{B}_{00}} \right] e_{w}^{n,m}.
	\end{equation*}
	Similarly, based on \eqref{3.35} and Lemma \ref{le3.1}, we derive
	\begin{equation}
		| \delta_{w}^{n,m} | \lesssim \varepsilon^{q+1} \mathfrak{h}^{3}, 
		\quad 1 \leq q \leq 2,
		\quad 0 \leq m < M.
		\label{3.38}
	\end{equation}
	Recursively from \eqref{3.37}, we find for any $1 \leq n \leq N$ and $0 \leq m < M$, there holds
	\begin{equation*}
		e_{w}^{n,m} = \mathrm{e}^{n \mathfrak{h} \widehat{B}_{00}}  e_{w}^{0,m}
		+ \sum_{j=0}^{n-1} \mathrm{e}^{(n-1-j) \mathfrak{h} \widehat{B}_{00}} 
		\left[ \eta_w^{j,m} + \xi_w^{j,m} + \zeta_w^{j,m}(\mathfrak{h}) + \delta_{w}^{j,m} \right] ,   \\
	\end{equation*}
	from which it follows that
	\begin{align}
		\varepsilon \mathfrak{h} \sum_{n=0}^{N-1} \mathrm{e}^{\frac{\mathfrak{h}}{2} \widehat{B}_{00}} e_{w}^{n,m} = & \varepsilon \mathfrak{h} \sum_{n=0}^{N-1}
		\mathrm{e}^{(n+\frac{1}{2}) \mathfrak{h} \widehat{B}_{00}} e_{w}^{0,m} \notag \\
		& + \varepsilon \mathfrak{h} \sum_{n=0}^{N-1} \sum_{j=0}^{n-1} 
		\mathrm{e}^{(n-\frac{1}{2}-j) \mathfrak{h} \widehat{B}_{00}}
		\left[ \eta_w^{j,m} + \xi_w^{j,m} + \zeta_w^{j,m}(\mathfrak{h}) + \delta_{w}^{j,m} \right].
		\label{3.39}
	\end{align}
	Substituting \eqref{3.39} into the right side of \eqref{3.34} yields
	\begin{equation}
		e_{z}^{N,m} = e_{z}^{0,m} + \varepsilon \mathfrak{h} \sum_{n=0}^{N-1}
		\mathrm{e}^{(n+\frac{1}{2}) \mathfrak{h} \widehat{B}_{00}} e_{w}^{0,m} + \Gamma^{m},
		\quad 0 \leq m < M,
		\label{3.40}
	\end{equation}
	where
	\begin{align*}
		\Gamma^{m} := & \sum_{n=0}^{N-1}
		[ \eta_z^{n,m} + \xi_z^{n,m} + \zeta_z^{n,m}(\mathfrak{h}) ] + \delta_{z}^{n,m}   \\
		& + \varepsilon \mathfrak{h} \sum_{n=0}^{N-1} \sum_{j=0}^{n-1} 
		\mathrm{e}^{(n-\frac{1}{2}-j) \mathfrak{h} \widehat{B}_{00}}
		\left[ \eta_w^{j,m} + \xi_w^{j,m} + \zeta_w^{j,m}(\mathfrak{h}) + \delta_{w}^{j,m} \right].  
	\end{align*}	
	Substituting the estimates from \eqref{3.10}, \eqref{local}, \eqref{3.31}, \eqref{3.38}, and Lemma \ref{le3.1} into the last term of the above expression yields
	\begin{align*}
		\left| \varepsilon \mathfrak{h} \sum_{n=0}^{N-1} \sum_{j=0}^{n-1} 
		\mathrm{e}^{(n-\frac{1}{2}-j) \mathfrak{h} \widehat{B}_{00}}
		\left[ \eta_w^{j,m} + \xi_w^{j,m} + \zeta_w^{j,m}(\mathfrak{h}) + \delta_{w}^{j,m} \right] \right| \lesssim \varepsilon^{3} \mathfrak{h}^{2}
		+ \varepsilon^{2} \mathfrak{h} \sum_{n=0}^{N-1} | e_{z}^{n,m} |,  
	\end{align*}
	and combining with \eqref{3.11}, \eqref{local}, \eqref{3.30} and \eqref{3.36}, we can deduce that
	\begin{equation}
		| \Gamma^{m} | \lesssim \varepsilon^{3} \mathfrak{h}^{2}
		+ \varepsilon^{2} \mathfrak{h} \sum_{n=0}^{N-1} | e_{z}^{n,m} |, 
		\quad 0 \leq m < M. 
		\label{D}
	\end{equation}
	It is noteworthy that since \(1 \leq q \leq 2\), \(| \Gamma^{m} |\) is controlled by the local truncation errors \eqref{local}.
	Using \eqref{D} and considering the quadrature error of the trapezoidal rule, we deduce from \eqref{3.40} that
	\begin{equation}
		| e_{z}^{N,m} | - | e_{z}^{0,m} | \lesssim \varepsilon \left| \int_{0}^{T_0} \mathrm{e}^{s\widehat{B}_{00}} ds \cdot e_{w}^{0,m} \right|
		+ \varepsilon^{3} \mathfrak{h}^{2}
		+ \varepsilon^{2} \mathfrak{h} \sum_{n=0}^{N-1} | e_{z}^{n,m} |, 
		\quad 0 \leq m < M.
		\label{3.41}
	\end{equation}
	Applying the Rodrigues formula yields
	\begin{equation*}
		\mathrm{e}^{s\widehat{B}_{00}} e_{w}^{0,m} = \cos(s|B(0)|) e_{w}^{0,m} + \sin(s|B(0)|) e_{w}^{0,m} \times \widetilde{B}_{00} + (1 - \cos(s|B(0)|)) \left( \widetilde{B}_{00} \cdot e_{w}^{0,m} \right) \widetilde{B}_{00},
	\end{equation*}
	with \(\widetilde{B}_{00} = B(0)/|B(0)|\) denoting the unit magnetic field vector at the origin.
	When integrated over a full period $T_0$, the expression simplifies to retain only
	\begin{equation*}
		\int_{0}^{T_0} \mathrm{e}^{s\widehat{B}_{00}} ds \cdot e_{w}^{0,m} = T_0 
		\left( \widetilde{B}_{00} \cdot e_{w}^{0,m} \right) \widetilde{B}_{00}.
	\end{equation*}
	Consequently, from \eqref{3.41} it follows that
	\begin{align}
		| e_{z}^{N,m} | - | e_{z}^{0,m} | & \lesssim \varepsilon \left| \left( \widetilde{B}_{00} \cdot e_{w}^{0,m} \right) \widetilde{B}_{00} \right|
		+ \varepsilon^{3} \mathfrak{h}^{2}
		+ \varepsilon^{2} \mathfrak{h} \sum_{n=0}^{N-1} | e_{z}^{n,m} | \notag \\
		& \lesssim \varepsilon | e_{w,\parallel}^{0,m} |
		+ \varepsilon^{3} \mathfrak{h}^{2}
		+ \varepsilon^{2} \mathfrak{h} \sum_{n=0}^{N-1} | e_{z}^{n,m} |,
		\quad 0 \leq m < M, 
		\label{3.42}
	\end{align}
	where $e_{w,\parallel}^{0,m}$ is denoted the error $e_{w}^{0,m}$ in the parallel direction of the magnetic field $B(\varepsilon^q z(mT_0))$, i.e.,
	\begin{align*}
		e_{w,\parallel}^{n,m} & := \left( \widetilde{B}^{n,m} \cdot e_{w}^{n,m} \right) \widetilde{B}^{n,m}, 
		\quad 0 \leq n \leq N, \quad 0 \leq m < M, \\
		\widetilde{B}^{n,m} & := \frac{B( \varepsilon^q z(\tau_{n}^{m}))}{|B( \varepsilon^q z(\tau_{n}^{m}))|},
		\quad 1 \leq q \leq 2.
	\end{align*}
	
	Directly applying the error results of Lemma \ref{le3.1} to \eqref{3.33a},
	it readily follows that
	\begin{equation}
		|e_{z}^{n,m} | \lesssim \varepsilon^{2} \mathfrak{h}^{2} + | e_{z}^{0,m} |,
		\quad 1 \leq n \leq N, \quad 0 \leq m < M.
		\label{3.43}
	\end{equation}
	Substituting \eqref{3.43} into \eqref{3.42}, and noting that $e_{z}^{N,m} = e_{z}^{0,m+1}$,  we get 
	\begin{align}
		| e_{z}^{0,m+1} | - | e_{z}^{0,m} | & \lesssim \varepsilon | e_{w,\parallel}^{0,m} |
		+ \varepsilon^{3} \mathfrak{h}^{2}
		+ \varepsilon^{2} ( \varepsilon^{2} \mathfrak{h}^{2} + | e_{z}^{0,m} | ) \notag \\
		& \lesssim \varepsilon | e_{w,\parallel}^{0,m} |
		+ \varepsilon^{3} \mathfrak{h}^{2}
		+ \varepsilon^{2}  | e_{z}^{0,m} |,
		\quad 0 \leq m < M.
		\label{3.44}
	\end{align}
	Taking the inner product of both sides of \eqref{3.33b} with the unit vector $\widetilde{B}^{n+1,m}$ yields
	\begin{align}
		\left| e_{w,\parallel}^{n+1,m} \right| \leq & \left| \widetilde{B}^{n+1,m} \cdot 
		\left( \mathrm{e}^{\frac{\mathfrak{h}}{2} \widehat{B}_{0}}  \mathrm{e}^{\mathfrak{h}(\widehat{B}_{\bar{z}(\tau_{n}^{m})}-\widehat{B}_{0})}\mathrm{e}^{\frac{\mathfrak{h}}{2}\widehat{B}_{0}} e_{w}^{n,m} \right) \right| \notag \\
		& + \left| \widetilde{B}^{n+1,m} \cdot \eta_{w}^{n,m} \right|
		+ \left| \widetilde{B}^{n+1,m} \cdot \xi_{w}^{n,m} \right|
		+ \left| \widetilde{B}^{n+1,m} \cdot \zeta_{w}^{n,m}(\mathfrak{h}) \right|.
		\label{3.45}
	\end{align}
	Observing that 
	\begin{equation*}
		\widetilde{B}^{n+1,m} = \widetilde{B}^{n,m} + O(\varepsilon^{q+1} \mathfrak{h}),
		\quad 1 \leq q \leq 2,
	\end{equation*}
	and employing the Rodrigues formula, we get
	\begin{equation*}
		\left| \widetilde{B}^{n+1,m} \cdot 
		\left( \mathrm{e}^{\frac{\mathfrak{h}}{2} \widehat{B}_{0}}  \mathrm{e}^{\mathfrak{h}(\widehat{B}_{\bar{z}(\tau_{n}^{m})}-\widehat{B}_{0})}\mathrm{e}^{\frac{\mathfrak{h}}{2}\widehat{B}_{0}} e_{w}^{n,m} \right) \right|
		\lesssim | e_{w,\parallel}^{n,m} | + \varepsilon^{q+2} \mathfrak{h}^{3},
		\quad 1 \leq q \leq 2.
	\end{equation*}
	Recalling \eqref{3.16a}-\eqref{3.16b} and applying Rodrigues formula, it follows directly that
	\begin{equation*}
		\left| \widetilde{B}^{n+1,m} \cdot \xi_{w}^{n,m} \right| \lesssim \varepsilon^{q+2} \mathfrak{h}^{3},
		\quad 1 \leq q \leq 2.
	\end{equation*}
	Then, using the triangle inequality to \eqref{3.31} and \eqref{3.10} yields
	\begin{align*}
		\left| \widetilde{B}^{n+1,m} \cdot \eta_{w}^{n,m} \right| 
		& \lesssim \left| \eta_{w}^{n,m} \right|
		\lesssim \varepsilon \mathfrak{h} | e_{z}^{n,m} | 
		+ \varepsilon^{2} \mathfrak{h}^{2} | e_{w}^{n,m} |,   \\
		\left| \widetilde{B}^{n+1,m} \cdot \zeta_{w}^{n,m}(\mathfrak{h}) \right|  & \lesssim \left| \zeta_{w}^{n,m}(\mathfrak{h}) \right| 
		\lesssim \varepsilon^{q+1} \mathfrak{h}^{3},
		\quad 1 \leq q \leq 2.   
	\end{align*}
	Substituting all the above estimates into \eqref{3.45}, we arrive at
	\begin{equation}
		\left| e_{w,\parallel}^{n+1,m} \right| - \left| e_{w,\parallel}^{n,m} \right| \lesssim
		\varepsilon \mathfrak{h} | e_{z}^{n,m} | + \varepsilon^{q+1} \mathfrak{h}^{3},
		\quad 1 \leq q \leq 2,
		\quad 0 \leq n \leq N-1, \quad 0 \leq m <M.
		\label{3.46}
	\end{equation} 
	Summing \eqref{3.46} over $n$ from $0$ to $N-1$ and using the relation $e_{w}^{N,m} = e_{w}^{0,m+1}$, we can derive that
	\begin{equation*}
		\left| e_{w,\parallel}^{0,m+1} \right| - \left| e_{w,\parallel}^{0,m} \right| \lesssim
		\varepsilon \mathfrak{h} \sum_{n=0}^{N-1} | e_{z}^{n,m} | + \sum_{n=0}^{N-1} \varepsilon^{q+1} \mathfrak{h}^{3},
		\quad 1 \leq q \leq 2,
		\quad 0 \leq m <M.
	\end{equation*}
	Plugging \eqref{3.43} into the above inequality leads to
	\begin{equation}
		\left| e_{w,\parallel}^{0,m+1} \right| - \left| e_{w,\parallel}^{0,m} \right| \lesssim
		\varepsilon  | e_{z}^{0,m} | + \varepsilon^{q+1} \mathfrak{h}^{2},
		\quad 1 \leq q \leq 2,
		\quad 0 \leq m < M.
		\label{3.47}
	\end{equation}
	Finally, adding \eqref{3.44} and \eqref{3.47} together yields
	\begin{equation*}
		\left| e_{z}^{0,m+1} \right| + \left| e_{w,\parallel}^{0,m+1} \right| 
		- \left| e_{z}^{0,m} \right| - \left| e_{w,\parallel}^{0,m} \right| \lesssim 
		\varepsilon ( | e_{z}^{0,m} | +  | e_{w,\parallel}^{0,m} | ) 
		+ \varepsilon^{q+1} \mathfrak{h}^{2}, 
		\quad 1 \leq q \leq 2,
		\quad 0 \leq m < M.
	\end{equation*}
	Applying Gronwall's inequality to the above expression and noting the initial error conditions \(e_{z}^{0,0} = e_{w,\parallel}^{0,0} = 0\), we obtain
	\begin{equation}
		\left| e_{z}^{0,m} \right| + \left| e_{w,\parallel}^{0,m} \right| \lesssim 
		\varepsilon^q \mathfrak{h}^{2}, 
		\quad 1 \leq  q \leq 2,
		\quad  0 \leq m \leq M. 
		\label{E}
	\end{equation}
	By substituting \eqref{E} into \eqref{3.43} and \eqref{3.46}, we immediately obtain the estimates for \(e_{z}^{n,m}\) and \(e_{w,\parallel}^{n,m}\) with \(0 < n < N\).
	This completes the proof of Lemma \ref{le3.2}.
\end{proof} 

To return to the original scale, set the time step size \( h = \varepsilon \mathfrak{h} \). Then we have
\begin{equation*}
x(t_n) = z(\tau_n),\quad v(t_n) = w(\tau_n),\quad x^n = z^n,\quad v^n = w^n, \quad n \geq 0.
\end{equation*}
By defining \( v_{\parallel}(t) := w_{\parallel}(\tau) \) and \( v_{\parallel}^n := w_{\parallel}^n \), we obtain the convergence result of the S2-new scheme for the original system \eqref{3.1}, i.e., equation \eqref{2.7} in Theorem \ref{th2.1} holds. The proof is now complete.

\section{Conclusion}\label{sec4}

In this work, we investigate the numerical approximation of the CPD involving a small parameter $0<\varepsilon \ll 1$, which characterizes the strength of the external magnetic field. We propose a novel splitting approach and construct an explicit symmetric splitting scheme. For the case $1 \leq q \leq 2$ in \eqref{1.1}, our scheme achieves error bounds superior to those of traditional second-order splitting methods. Specifically, in the regime of strong magnetic fields corresponding to $q = 2$ and uniform ones, the method maintains a uniform second-order error bound. Numerical experiments verify the theoretical accuracy and the energy near preservation of the scheme. 

Last but not least, for $q = 1$ under strong magnetic fields, we observe that the numerical errors for specific test problems exhibit a more favourable dependence on $\varepsilon$ than predicted by existing theoretical analyses. We intend to investigate this phenomenon in future work. For the case $0 \leq q < 1$, it is necessary to explore alternative approaches to develop more efficient methods. This remains another direction for our future research.

\section*{Competing interests}
We declare that we have no conflict of interest.
\section*{Funding}
This work was supported partially by the  National Natural Science Foundation of China (Grant No. 12371403).

\bibliographystyle{elsarticle-num}      
\bibliography{tinging}

\end{document}